\documentclass[11pt]{article}

\usepackage[T1]{fontenc}
\usepackage[margin=1in]{geometry}
\usepackage{microtype}
\usepackage{amsmath,amssymb,amsthm,mathtools}
\usepackage{enumitem}
\usepackage{array,booktabs,tabularx}
\usepackage{authblk}
\usepackage{float}
\usepackage{needspace}
\usepackage{xcolor}
\usepackage{tikz}
\usepackage{url}
\usepackage[hypertexnames=false,colorlinks=true,linkcolor=blue!55!black,
  citecolor=blue!55!black,urlcolor=blue!55!black]{hyperref}

\numberwithin{equation}{section}

\hypersetup{
  pdftitle={The Erdos-Hajnal Property for the six-vertex Graph
    with Edge Set \{ab,bc,cd,de,af,bf,df\}},
  pdfauthor={Viet-Hoang Tran; Tan M. Nguyen}
}

\theoremstyle{plain}
\newtheorem{theorem}{Theorem}[section]
\newtheorem{proposition}[theorem]{Proposition}
\newtheorem{lemma}[theorem]{Lemma}
\newtheorem{corollary}[theorem]{Corollary}

\theoremstyle{definition}
\newtheorem{definition}[theorem]{Definition}

\title{The Erd\H{o}s--Hajnal Property for the six-vertex Graph\\
with Edge Set $\{ab,bc,cd,de,af,bf,df\}$}
\author[1]{Viet-Hoang Tran}
\author[1]{Tan M. Nguyen}
\affil[1]{Department of Mathematics, National University of Singapore}
\date{}

\begin{document}

\maketitle

\begin{abstract}
We prove that the six-vertex graph with edge set $\{ab,bc,cd,de,af,bf,df\}$ has the  Erd\H{o}s--Hajnal property. The proof adapts the iterative-sparsification method of Nguyen, Scott, and Seymour within the comb-based framework of Huang, Ju, and Zhou.
\end{abstract}

\begingroup
\footnotesize
\tableofcontents
\endgroup
\clearpage

\section{Introduction}
\label{sec:introduction}

\begin{table}[!t]
\centering
\footnotesize
\setlength{\tabcolsep}{4pt}
\renewcommand{\arraystretch}{1.05}
\caption{Erd\H{o}s--Hajnal status of $N,J,E,\mathrm{Bird},F$.}
\vspace{6pt}
\label{tab:six-vertex-status}
\begin{tabularx}{\textwidth}{@{}
 >{\raggedright\arraybackslash}p{0.09\textwidth}
 >{\raggedright\arraybackslash}X@{}}
\toprule
\textbf{Graph} & \textbf{Erd\H{o}s--Hajnal status and source} \\
\midrule
$N$
& Erd\H{o}s--Hajnal; proved by Nguyen--Scott--Seymour
  \cite[Theorem~1.4 and Figure~1]{NguyenScottSeymourDensityIV}. \\
\addlinespace[3pt]
$J$
& Erd\H{o}s--Hajnal; follows by combining the Nguyen--Scott--Seymour viral leaf/co-leaf
  result~\cite[Theorem~7.8 and p.~22]{NguyenScottSeymourDensityIV} and
  $P_5$ theorem~\cite{NguyenScottSeymourP5} with the Buci\'c--Fox--Pham
  Erd\H{o}s--Hajnal/viral equivalence
  \cite[Theorem~4]{BucicFoxPham2024}. \\
\addlinespace[3pt]
$E$
& Erd\H{o}s--Hajnal; proved by Huang--Ju--Zhou in a preprint
  \cite[Theorem~1.10]{HuangJuZhou2026}. \\
\addlinespace[3pt]
$\mathrm{Bird}$
& Erd\H{o}s--Hajnal; proved by Huang--Ju--Zhou in the same preprint
  \cite[Theorem~1.11]{HuangJuZhou2026}. \\
\addlinespace[3pt]
$F$
& Erd\H{o}s--Hajnal; proved here.  The case was open before this paper,
  to our knowledge; we give a new $F$-specific comb analysis. \\
\bottomrule
\end{tabularx}
\end{table}

For a graph $G$, let $\alpha(G)$ and $\omega(G)$ denote its stability
number and clique number.  Erd\H{o}s and Hajnal conjectured that for every
graph $H$ there is a constant $c=c(H)>0$ such that every $H$-free graph
$G$ satisfies
\begin{equation}
             \max\{\alpha(G),\omega(G)\}\geq |G|^c.
             \label{eq:EH-conjecture}
\end{equation}
Here $H$-free means that no induced subgraph of $G$ is isomorphic to
$H$.  When \eqref{eq:EH-conjecture} holds, $H$ is
\emph{Erd\H{o}s--Hajnal}; equivalently, it has the Erd\H{o}s--Hajnal
property.  The conjecture predicts a
polynomial-size clique or stable set, in sharp contrast with the
logarithmic bound furnished
by Ramsey's theorem for unrestricted graphs.  Erd\H{o}s and Hajnal
proved, for every fixed $H$ and all sufficiently large $|G|$, the
general lower bound
\[
 \max\{\alpha(G),\omega(G)\}
      \geq 2^{c_H\sqrt{\log |G|}},
\]
and Buci\'c, Nguyen, Scott, and Seymour improved it to
\[
 \max\{\alpha(G),\omega(G)\}
      \geq
      2^{c_H\sqrt{\log |G|\log\log |G|}}.
\]
The conjectured polynomial bound remains open for general $H$;
see~\cite{BucicNguyenScottSeymour2024,ErdosHajnal1989}.

The property is invariant under complementation.  A second fundamental
closure theorem, due to Alon, Pach, and Solymosi, says that it is
preserved by vertex substitution~\cite{AlonPachSolymosi2001}.  Thus it
is enough to prove the conjecture for prime graphs, that is, graphs with
no nontrivial module.

The small prime graphs have been central to the subject.  The property
is classical for $P_4$; indeed, every $P_4$-free graph $G$ satisfies
$\alpha(G)\omega(G)\geq |G|$.  Thus all graphs on at most four vertices
are Erd\H{o}s--Hajnal.  Among prime five-vertex graphs, the bull was
settled by Chudnovsky and Safra~\cite{ChudnovskySafra2008}, and the
$5$-cycle by Chudnovsky, Scott, Seymour, and
Spirkl~\cite{ChudnovskyScottSeymourSpirkl2023}.  Nguyen, Scott, and
Seymour recently proved the conjecture for the five-vertex path
$P_5$~\cite{NguyenScottSeymourP5}.  Together with complementation and
the substitution theorem, this completes the conjecture for every graph
on at most five vertices.

\subsection{Six-vertex cases and the contribution of this paper}

Six is the first order for which the conjecture is not settled.  There
are $26$ prime graphs on six vertices, forming $13$ complementary
pairs.  A direct enumeration gives ten pairs
with a member containing an induced $P_5$ or $\overline{P_5}$, and
three with no such member.\footnote{The figures ten
and six in~\cite[p.~4]{NguyenScottSeymourDensityIV} count complementary
pairs and individual graphs, respectively.}

The symbols $N,J,F$ are local notation.  Let $N$ denote the left-hand
six-vertex graph in~\cite[Figure~1]{NguyenScottSeymourDensityIV}; after
relabelling its vertices as $a,b,c,d,e,f$, it has edge set
\[
             E(N)=\{ab,bc,bd,de,af,bf,df\}.
\]
The graph $J$ is the $P_5$-extension described
in~\cite[p.~22]{NguyenScottSeymourDensityIV}, while $E$ and the Bird are
the graphs in~\cite[Figures~4 and~5]{HuangJuZhou2026}.
For each of $J,E,\mathrm{Bird},F$, the vertices $a,b,c,d,e$ induce the
path $a-b-c-d-e$, and the neighbourhood of $f$ is
\[
 \begin{aligned}
 N_J(f)&=\{a,b,c,d\}, & N_E(f)&=\{c\},\\
 N_{\mathrm{Bird}}(f)&=\{b,c\}, & N_F(f)&=\{a,b,d\}.
 \end{aligned}
\]
Thus $J$ is obtained from $P_5$ by adding a vertex adjacent to every
path vertex except one end, $E$ is obtained from $P_5$ by adding a leaf
at its middle vertex, and the Bird is obtained by extending one horn of
the bull by one edge.  Equivalently, $F-e$ is the house and
$N_F(e)=\{d\}$, where $d$ has degree two in $F-e$ and lies outside its
unique triangle $abf$.
Figure~\ref{fig:six-vertex-representatives} shows these five graphs.

\begin{figure}[!t]
\centering
\begin{tikzpicture}[x=1.18cm,y=1.18cm,
  caseedge/.style={draw=black!86,line width=.58pt,
    line cap=round,line join=round},
  casevertex/.style={circle,draw=black!92,fill=white,line width=.55pt,
    minimum size=5.2mm,inner sep=0pt,outer sep=0pt,font=\footnotesize},
  casetitle/.style={font=\small}]
\begin{scope}
  \node[casetitle] at (0,.86) {$N$};
  \coordinate (Na) at (-1.050,-.120);
  \coordinate (Nb) at (-.525,.356);
  \coordinate (Nc) at (.525,.356);
  \coordinate (Nd) at (1.050,-.120);
  \coordinate (Ne) at (.525,-.596);
  \coordinate (Nf) at (-.525,-.596);
  \draw[caseedge] (Na)--(Nb) (Nb)--(Nc) (Nb)--(Nd) (Nd)--(Ne)
    (Na)--(Nf) (Nb)--(Nf) (Nd)--(Nf);
  \node[casevertex] at (Na) {$a$};
  \node[casevertex] at (Nb) {$b$};
  \node[casevertex] at (Nc) {$c$};
  \node[casevertex] at (Nd) {$d$};
  \node[casevertex] at (Ne) {$e$};
  \node[casevertex] at (Nf) {$f$};
\end{scope}
\begin{scope}[shift={(2.84,0)}]
  \node[casetitle] at (0,.86) {$J$};
  \coordinate (Ja) at (-1.050,-.120);
  \coordinate (Jb) at (-.525,.356);
  \coordinate (Jc) at (.525,.356);
  \coordinate (Jd) at (1.050,-.120);
  \coordinate (Je) at (.525,-.596);
  \coordinate (Jf) at (-.525,-.596);
  \draw[caseedge] (Ja)--(Jb)--(Jc)--(Jd)--(Je);
  \draw[caseedge] (Jf)--(Ja) (Jf)--(Jb) (Jf)--(Jc) (Jf)--(Jd);
  \node[casevertex] at (Ja) {$a$};
  \node[casevertex] at (Jb) {$b$};
  \node[casevertex] at (Jc) {$c$};
  \node[casevertex] at (Jd) {$d$};
  \node[casevertex] at (Je) {$e$};
  \node[casevertex] at (Jf) {$f$};
\end{scope}
\begin{scope}[shift={(5.68,0)}]
  \node[casetitle] at (0,.86) {$E$};
  \coordinate (Ea) at (-1.050,-.120);
  \coordinate (Eb) at (-.525,.356);
  \coordinate (Ec) at (.525,.356);
  \coordinate (Ed) at (1.050,-.120);
  \coordinate (Ee) at (.525,-.596);
  \coordinate (Ef) at (-.525,-.596);
  \draw[caseedge] (Ea)--(Eb)--(Ec)--(Ed)--(Ee) (Ec)--(Ef);
  \node[casevertex] at (Ea) {$a$};
  \node[casevertex] at (Eb) {$b$};
  \node[casevertex] at (Ec) {$c$};
  \node[casevertex] at (Ed) {$d$};
  \node[casevertex] at (Ee) {$e$};
  \node[casevertex] at (Ef) {$f$};
\end{scope}
\begin{scope}[shift={(8.52,0)}]
  \node[casetitle] at (0,.86) {$\mathrm{Bird}$};
  \coordinate (Ba) at (-1.050,-.120);
  \coordinate (Bb) at (-.525,.356);
  \coordinate (Bc) at (.525,.356);
  \coordinate (Bd) at (1.050,-.120);
  \coordinate (Be) at (.525,-.596);
  \coordinate (Bf) at (-.525,-.596);
  \draw[caseedge] (Ba)--(Bb)--(Bc)--(Bd)--(Be);
  \draw[caseedge] (Bf)--(Bb) (Bf)--(Bc);
  \node[casevertex] at (Ba) {$a$};
  \node[casevertex] at (Bb) {$b$};
  \node[casevertex] at (Bc) {$c$};
  \node[casevertex] at (Bd) {$d$};
  \node[casevertex] at (Be) {$e$};
  \node[casevertex] at (Bf) {$f$};
\end{scope}
\begin{scope}[shift={(11.36,0)}]
  \node[casetitle] at (0,.86) {$F$};
  \coordinate (Fa) at (-1.050,-.120);
  \coordinate (Fb) at (-.525,.356);
  \coordinate (Fc) at (.525,.356);
  \coordinate (Fd) at (1.050,-.120);
  \coordinate (Fe) at (.525,-.596);
  \coordinate (Ff) at (-.525,-.596);
  \draw[caseedge] (Fa)--(Fb)--(Fc)--(Fd)--(Fe);
  \draw[caseedge] (Ff)--(Fa) (Ff)--(Fb) (Ff)--(Fd);
  \node[casevertex] at (Fa) {$a$};
  \node[casevertex] at (Fb) {$b$};
  \node[casevertex] at (Fc) {$c$};
  \node[casevertex] at (Fd) {$d$};
  \node[casevertex] at (Fe) {$e$};
  \node[casevertex] at (Ff) {$f$};
\end{scope}
\end{tikzpicture}
\caption{The graphs $N,J,E,\mathrm{Bird},F$ are all
Erd\H{o}s--Hajnal.  The cases $N,J$ were known, Huang--Ju--Zhou proved
$E$ and the Bird in a preprint, and this paper proves the new
case $F$.}
\label{fig:six-vertex-representatives}
\end{figure}%

\Needspace{12\baselineskip}
\subsection{Main theorem and proof strategy}

Our main result is the following.

\begin{theorem}
\label{thm:main}
The graph $F$ is Erd\H{o}s--Hajnal.
\end{theorem}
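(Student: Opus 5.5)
We follow the standard modern route: by the Buci\'c--Fox--Pham equivalence \cite[Theorem~4]{BucicFoxPham2024}, it suffices to show that $F$ is \emph{viral}. That is, for some $c>0$ and every $\varepsilon\in(0,1/2)$, every graph $G$ contains either $\varepsilon^{c}|G|^{|F|}$ induced copies of $F$, or an induced subgraph on at least $\varepsilon^{c}|G|$ vertices that is $\varepsilon$-sparse or $\varepsilon$-dense. We argue by contradiction. We take a minimal counterexample $G$ and apply the iterative-sparsification machinery of Nguyen--Scott--Seymour, together with the R\"odl-type lemma with polynomial dependence. This lets us assume that $G$ has linear-size subsets between which the density is either at most $\varepsilon$ or at least $1-\varepsilon$. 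The sparse alternative is the one to pursue, since the dense side is handled by the complement and the fact that viral is closed under complementation. In the sparse setting, we aim to produce many copies of the ``backbone'' of $F$, namely the house $F-e$. We then show that the pendant vertex $e$, which is adjacent only to $d$, can be attached in polynomially many ways.

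The key steps, in order, are as follows.

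\textbf{Step 1.} Reduce to finding a \emph{comb} in the sense of Huang--Ju--Zhou. This consists of an induced path $P=a$--$b$--$c$--$d$ (or a long induced path $p_1\dots p_k$) together with disjoint linear-size ``teeth'' sets $A_1,\dots,A_k$ of polynomial size. Each vertex of $A_i$ is anticomplete to most of the structure except for a controlled attachment to $p_i$, and the pairs $(A_i,A_j)$ are $\varepsilon$-sparse. The existence of such combs in sparse graphs without large sparse or dense sets should follow from the Huang--Ju--Zhou framework applied with $P_5$ as the underlying path, because $F$ contains the induced $P_5$ $abcde$.

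\textbf{Step 2.} Inside the comb, we look for a vertex $f$ outside the path with neighbourhood exactly $\{a,b,d\}$ on a subpath $abcd$. The density increment has two cases. If many vertices have this attachment pattern, we count copies and attach $e$ as a tooth at $d$, which gives $\varepsilon^{c}|G|^6$ copies of $F$. Otherwise, the ``wrong'' attachment types of $f$ are rare. Examples are $f$ adjacent to $c$, missing $a$, or seeing $e$. In that case, we pass to a large subgraph where one of these patterns is excluded. This yields either a sparser induced subgraph, continuing the iteration, or a large homogeneous set.

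\textbf{Step 3.} We control the iteration by showing that each round loses only a polynomial factor in $\varepsilon$. The number of rounds is then bounded by a constant depending on $F$. This is the Nguyen--Scott--Seymour bookkeeping, which we would import essentially verbatim.

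The main obstacle will be Step 2, specifically handling the triangle $abf$ inside the sparse setting. Sparse combs naturally produce paths and trees, but $F$ needs a triangle $abf$ and the $4$-cycle $bcdf$ (the house) while $e$ remains a pendant at $d$. Our first attempt is to work in the complement for the house part. There $\overline{F-e}$ is $P_5$, so $F-e$ is the complement of $P_5$, and we can apply the Nguyen--Scott--Seymour $P_5$ analysis to obtain many houses. We then use a sparse anticomplete tooth adjacent to $d$ to attach $e$, arguing that $e$ is nonadjacent to $a,b,c,f$ with high probability because of sparsity. The delicate point is that $e$'s neighbours must avoid $c$ even though $c$ and $d$ are adjacent. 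This requires the teeth to be chosen after fixing the house, using a further cleaning step that costs an extra polynomial factor. We expect this cleaning step to carry most of the $F$-specific case analysis.
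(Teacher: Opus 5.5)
\textbf{Genuine gap.} The gap is your dismissal of the dense side. Virality of $F$ is equivalent to virality of $T=\overline F$, but that equivalence only restates the problem. It does not let you run a single branch of the sparsification. A dense piece in which you must find $F$ becomes, after complementing, a sparse piece in which you must find $T$, and that is a different problem from finding $F$ in a sparse graph. Everything in your outline (many houses, then attach the pendant $e$ at $d$) lives on the branch where the leaf of $F$ helps, and that is the easy branch. It is exactly leaf-reducibility of $\{F\}$, which the paper settles in one line ($F-e$ is the house, Erd\H{o}s--Hajnal by the $P_5$ theorem) and feeds into Lemma~\ref{lem:hjz-leaf-expansion}. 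Even your ``delicate point'' is not delicate there. With maximum degree at most $y|G|$, fix the images of $a,b,c,f$. Either some admissible image of $d$ has a neighbour outside $N(a)\cup N(b)\cup N(c)\cup N(f)$, which gives $F$. Or the set of admissible $d$-images is anticomplete to a set of order $(1-O(y))|G|$, which is itself a usable outcome for the blockade iteration. The only $F$-specific work you anticipate sits on this branch, so your argument effectively uses nothing about $F$ beyond `$F-e$ is Erd\H{o}s--Hajnal'. But no theorem says that attaching a pendant vertex preserves the property. If one did, $F$, the $E$-graph and the Bird (each a five-vertex Erd\H{o}s--Hajnal graph plus a pendant vertex) would be immediate. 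Generically one only obtains the pair $\{F,T\}$, via Theorem~\ref{thm:leaf-coleaf-closure}.

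\textbf{What is missing.} You never treat the sparse $T$-free branch, where the vertex $e$ of $T$ is a co-leaf and the leaf trick is useless. There the sparse comb lemma (Lemma~\ref{lem:hjz-sparse-comb}) produces combs in the Huang--Ju--Zhou sense, independently of $F$. Such a comb has handles $h_i$ complete to their own tooth $B_i$ and anticomplete to the other teeth, and a root $r$ complete to all teeth and anticomplete to all handles. It is not an induced path with pendant teeth, and it does not come from the $P_5$ inside $F$. The graph-specific work is to show that such a comb in a $T$-free graph yields either a complete or anticomplete blockade or a pure blockade (Theorem~\ref{thm:F-comb-conclusion}). In both cases the length is at least a fixed power of the number of teeth and the width loss is polynomial. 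The paper does this with explicit induced copies of $T$:
\begin{itemize}
\item rectangle lemmas give additive $\mathbb F_2$-separability of interfaces over nonadjacent handles;
\item the graph of mixed interfaces has only even holes, so the $C_5$ theorem applies;
\item two-sided interfaces form a cluster graph whose large cliques synchronise into an $\{F,T\}$-free pure pattern;
\item one-sided interfaces have nested cuts and give interval-graph patterns;
\item clique handles are handled by propagation and multipartite aggregation.
\end{itemize}
Wonderfulness (Proposition~\ref{prop:f124-wonderful}) is a further input needed to conclude. Your Step~2 (``if the wrong attachment types are rare, exclude one and get a sparser subgraph'') gives no mechanism for any of this. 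Finally, the bookkeeping in Step~3 is not the Nguyen--Scott--Seymour one. Their sparsification uses about $\log\log(1/\varepsilon)$ rounds, and the bound stays polynomial only because each loss is a power of the current density parameter, not of $\varepsilon$.
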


The proof uses the iterative-sparsification method of Nguyen, Scott, and
Seymour~\cite{NguyenScottSeymourDensityIV}, in the comb-based framework
developed by Huang, Ju, and Zhou~\cite{HuangJuZhou2026}.  In that
framework, leaf-reducibility together with a comb statement called
property $(*)$ yields generalized niceness; generalized niceness,
leaf-reducibility, and wonderfulness then yield the Erd\H{o}s--Hajnal
property.  We use this architecture and several of its ideas.

Some inputs and reduction steps are needed here in relative-size or
complement-symmetric forms.  A few passages in the preprint were not
clear to us in exactly those forms, and we may have missed an intended
convention or normalisation.  We therefore cite the inputs used
unchanged and state and prove the variants and reduction steps needed
here.  This concerns only the general framework: our proof of $F$
neither uses nor reproves the graph-specific arguments for the
$E$-graph or the Bird, and those results remain entirely due to Huang,
Ju, and Zhou.

Put $T=\overline F$.  Section~\ref{sec:target-local} verifies three
preliminary facts.  First, $\{F\}$ is leaf-reducible because $e$ is a
leaf of $F$ and $F-e=\overline{P_5}$.  Second, the leaf/co-leaf closure
and the known $P_5$ theorem imply that the family $\{F,T\}$ is
Erd\H{o}s--Hajnal; this family result is used later for one pure
blockade pattern.  Third, $\{F\}$ satisfies the bidirectional form of
wonderfulness required by the reduction.

We do not prove property $(*)$ for $F$.  Instead, we use the
quantitative comb condition in Definition~\ref{def:comb-hypothesis},
whose pure outcome may retain only a fixed positive power of the teeth,
rather than all of them.  Concretely,
Theorem~\ref{thm:F-comb-conclusion} gives constants $b,g,p,q>0$ such
that every equipped $(\ell,w)$-comb in a $T$-free graph, with
$\ell,w\geq4$, has one of the following two outcomes:
\[
\begin{array}{ll}
\mathrm{(U)} & \text{a complete or anticomplete $(k,w/k^b)$-blockade
                     with $k\geq\ell^g$};\\[2pt]
\mathrm{(P)} & \text{a pure $(k,w/\ell^q)$-blockade with
                     $\ell^p\leq k\leq\ell$}.
\end{array}
\]
Here ``equipped'' means that a vertex outside the comb is complete to
every tooth and anticomplete to every handle.  The homogeneous-set
outcome allowed in Definition~\ref{def:comb-hypothesis} is not needed
for $F$.  Section~\ref{sec:hjz-quantitative} proves, by adapting the
five-stage reduction of Huang--Ju--Zhou, that this weaker quantitative
condition still implies generalized niceness for a leaf-reducible
family.

We next describe the graph-specific part of the proof.  Write $h_i$ and
$B_i$ for the handles and teeth of the comb.  For sufficiently large
$\ell$, a pivot--true-twin extraction gives either many clique handles,
or a polynomial-size index set $I$ on which every handle edge has an
anticomplete tooth interface.  In the clique case, a propagation lemma
for induced copies of $T$ and the multipartite aggregation lemma of
Section~\ref{sec:multipartite} give outcome~(U).

In the other case, the component--anticomponent extraction either gives
outcome~(U) immediately or, for every $i\in I$, produces an
anticonnected set $X_i$ contained in a connected carrier
$C_i\subseteq B_i$.  Only after this extraction do the one-edge and
three-edge rectangle exclusions show that every retained interface
$X_i$--$X_j$ is additively separable over $\mathbb F_2$.  Interfaces
over handle edges are already constant zero, so every mixed interface
lies over a handle nonedge.

The rest of the argument is organised by two auxiliary graphs.  The
first, $M$, records the mixed interfaces.  Every hole of $M$ has even
length, and hence $M$ is $C_5$-free.  The Erd\H{o}s--Hajnal theorem for
$C_5$ supplies a polynomial-size set on which all interfaces are pure
or all are mixed.  The all-pure case gives outcome~(P).  In
the all-mixed case, the corresponding handles are pairwise nonadjacent,
and a second graph $K$ records the interfaces whose two endpoint
functions are both nonconstant.  We prove that $K$ is a cluster graph.

A large component of $K$ has synchronised endpoint cuts.  Choosing one
fibre in each corresponding tooth produces a pure blockade whose
pattern is $\{F,T\}$-free; the Erd\H{o}s--Hajnal property of the family
$\{F,T\}$ then gives outcome~(U).  If every component of $K$
is small, we choose one index from each of many components.  The
remaining interfaces are one-sided, their orientations are transitive,
and their endpoint cuts are nested.  The resulting pure pattern is an
interval graph, whose clique-or-stable-set bound again gives
outcome~(U).

Section~\ref{sec:main-comb} carries out the exponent bookkeeping and
handles bounded $\ell$ separately, proving the quantitative comb
condition for $\{F\}$.  Lemma~\ref{lem:hjz-poly-length} then combines
this condition with leaf-reducibility to make $\{F\}$ generalized nice.
Finally, Theorem~\ref{thm:hjz-generalized-nice} combines generalized
niceness with leaf-reducibility and wonderfulness to prove
Theorem~\ref{thm:main}.

To place Theorem~\ref{thm:main} in the six-vertex picture, the graphs
$N$ and $J$ are prime
\cite[Figure~1 and p.~22]{NguyenScottSeymourDensityIV}.
The $E$-graph and the Bird are prime by
Lemma~\ref{lem:prime-leaf-extension}, applied to $P_5$ and the bull; in
neither graph is the attachment vertex the neighbour of a leaf.  These
base graphs are connected and prime~\cite[p.~3]{HuangJuZhou2026}, and
$F$ is prime by Lemma~\ref{lem:F-prime}.  The five graphs and their
complements have pairwise distinct degree sequences.  Consequently, the
Huang--Ju--Zhou theorems for $E$ and the Bird, the cited results for
$N,J$, and Theorem~\ref{thm:main} settle five of the thirteen
complementary pairs.
Of the eight remaining pairs, two have no member containing an induced
$P_5$ or $\overline{P_5}$, while six do.  To the best of our knowledge,
all eight remain open as of 26 August 2026.

The paper is organised as follows.  Section~\ref{sec:preliminaries}
records the terminology and the general reductions.
Section~\ref{sec:target-local} verifies the framework inputs and develops
the local rectangle and affine-interface lemmas.
Sections~\ref{sec:all-mixed} and~\ref{sec:outer-graph} analyse the two
auxiliary graphs, and Section~\ref{sec:multipartite} proves the
clique-handle aggregation lemma.  Section~\ref{sec:hjz-quantitative}
gives the quantitative reductions in the forms used here.  Finally,
Section~\ref{sec:main-comb} assembles the structural alternatives,
verifies the quantitative comb condition, and proves
Theorem~\ref{thm:main}.

\section{Preliminaries and the iterative-sparsification framework}
\label{sec:preliminaries}

All graphs are finite, simple, and nonempty unless explicitly stated
otherwise.  We write $V(G)$ for the vertex set
of a graph $G$, $|G|=|V(G)|$, $G[X]$ for the subgraph induced by
$X\subseteq V(G)$, and $\overline G$ for the complement of $G$.  For a
positive integer $k$, put $[k]=\{1,\ldots,k\}$.  A graph is
$\mathcal H$-free, for a family $\mathcal H$, if it is $H$-free for
every $H\in\mathcal H$.  We put
$\overline{\mathcal H}=\{\overline H:H\in\mathcal H\}$ and
\[
             \operatorname{hom}(G)=
             \max\{\alpha(G),\omega(G)\}.
\]
A finite family $\mathcal H$ has the Erd\H{o}s--Hajnal property if
there is $c>0$ such that every $\mathcal H$-free graph $G$ satisfies
$\operatorname{hom}(G)\geq |G|^c$.

Let $X,Y$ be disjoint vertex sets.  We say that $X$ is
\emph{complete} to $Y$ if every pair in $X\times Y$ is an edge, and
\emph{anticomplete} to $Y$ if no such pair is an edge.  The pair
$(X,Y)$ is \emph{pure} if it is complete or anticomplete and is
\emph{mixed} otherwise.  A graph is \emph{anticonnected} if its
complement is connected; an \emph{anticomponent} is a connected
component of the complement.

For $x\in(0,1)$, the set $X$ is \emph{$x$-sparse to $Y$} if every
vertex of $X$ has at most $x|Y|$ neighbours in $Y$.  The pair $(X,Y)$ is
\emph{weakly $x$-sparse} if
$|E(X,Y)|\leq x|X||Y|$.  A graph $G$ is \emph{$x$-restricted} if
$G$ or $\overline G$ has maximum degree at most $x|G|$.

\subsection{Blockades and combs}

We use the terminology of Nguyen--Scott--Seymour and
Huang--Ju--Zhou~\cite{HuangJuZhou2026,NguyenScottSeymourDensityIV}.
An \emph{$(\ell,w)$-blockade} in $G$ is a sequence
$\mathcal B=(B_1,\ldots,B_k)$ of pairwise disjoint subsets of $V(G)$
such that $k\geq\ell$ and $|B_i|\geq w$ for every $i$.  Thus $\ell$
and $w$ are lower bounds and need not be integers, although the actual
number and orders of the blocks are integers.  The blockade is
\emph{complete}, \emph{anticomplete}, or \emph{pure} when every pair
of distinct blocks has the corresponding property.  It is
\emph{$x$-sparse} if $B_i$ is $x$-sparse to $B_j$ whenever $i>j$.
When $\mathcal B$ is pure, its \emph{pattern} is the graph on $[k]$ in
which $ij$ is an edge precisely when $B_i$ is complete to $B_j$.

For a positive integer $\ell$ and a real number $w>0$, an
\emph{$(\ell,w)$-comb} is a sequence
\[
                     ((h_i,B_i):i\in[\ell])
\]
such that $(B_1,\ldots,B_\ell)$ is an $(\ell,w)$-blockade, the
vertices $h_1,\ldots,h_\ell$ are distinct and disjoint from all the
blocks, and, for distinct $i,j$, the vertex $h_i$ is complete to
$B_i$ and anticomplete to $B_j$.  We call the $h_i$ the
\emph{handles} and the $B_i$ the \emph{teeth}.  Property $(*)$ below
also assumes a vertex
\begin{equation}
 \begin{split}
 r&\notin\{h_1,\ldots,h_\ell\}\cup\bigcup_iB_i,\\
 r&\text{ is complete to every tooth and anticomplete to every handle.}
 \end{split}
 \label{eq:root-outside}
\end{equation}
We will always say explicitly that the comb is equipped with such a
vertex $r$.

For two teeth $X_i\subseteq B_i$ and $X_j\subseteq B_j$, their
\emph{interface} is the bipartite adjacency relation between them.  Its
adjacency matrix is denoted by
\[
       M_{ij}(x,y)=1_{xy\in E(G)}\qquad
       (x\in X_i,\ y\in X_j).
\]
All additions of entries of such a matrix are in $\mathbb F_2$ and are
denoted by $\oplus$.

\subsection{Modules and substitution}

A set $S\subseteq V(H)$ is a \emph{module} if every vertex outside
$S$ is either complete or anticomplete to $S$.  It is nontrivial if
$1<|S|<|H|$, and $H$ is \emph{prime} if it has no nontrivial module.

\begin{lemma}[Prime leaf extension]
\label{lem:prime-leaf-extension}
Let $H$ be connected and prime, let $v\in V(H)$, and let $H^+$ be
obtained from $H$ by adding a leaf $x$ adjacent to $v$.  If $H$ has no
leaf whose neighbour is $v$, then $H^+$ is prime.
\end{lemma}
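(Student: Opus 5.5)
We take a nontrivial module $S$ of $H^+$, so $1<|S|<|H^+|$, and derive a contradiction by splitting on whether $x\in S$. The basic tool is that restriction preserves modules: for any $S$, the set $S\cap V(H)$ is a module of $H$. Indeed, a vertex of $H$ outside $S$ is complete or anticomplete to $S$, hence also to $S\cap V(H)$. Since $H$ is prime, this forces $|S\cap V(H)|\le 1$ or $S\cap V(H)=V(H)$. We also note that $H$ connected and prime with $|H|\ge 2$ gives $|H|\ge 2$, and primality makes the argument below meaningful.

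\textbf{Case $x\notin S$.} Then $S\subseteq V(H)$ with $|S|\ge2$, so $S\cap V(H)=S$ must equal $V(H)$. The vertex $x$ lies outside $S$ and is adjacent to $v\in S$, so it must be complete to $S=V(H)$. But $x$ has the single neighbour $v$, so $V(H)=\{v\}$, contradicting $|S|\ge 2$. (If $|H|=1$, then $H^+$ is $K_2$, which is prime; this degenerate case is handled trivially.)

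\textbf{Case $x\in S$.} Put $S'=S\setminus\{x\}$, which is nonempty since $|S|\ge2$. By the restriction fact, $S'$ is a module of $H$.

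If $S'=V(H)$, then $S=V(H^+)$, which contradicts $|S|<|H^+|$.

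Otherwise $S'=\{u\}$ for a single vertex $u$, so $S=\{x,u\}$. If $u\neq v$, then $v\notin S$, and $v$ is adjacent to $x$ but not to $u$ (since $u\ne v$ and $x$'s only neighbour is $v$; we need $v$ nonadjacent to $u$). So we must examine $u\neq v$ carefully. Every vertex $w\notin S$ of $H^+$ has $N(w)\cap S$ equal to $\emptyset$ or $S$. The vertex $v$ is adjacent to $x$, so $v$ must be adjacent to $u$. Any other $w\ne v$ in $H$ is nonadjacent to $x$, so it is nonadjacent to $u$. Hence $N_H(u)=\{v\}$, and $u$ is a leaf of $H$ whose neighbour is $v$, contradicting the hypothesis.

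If $u=v$, then $S=\{x,v\}$. Every $w\in V(H)\setminus\{v\}$ is nonadjacent to $x$, so it must be nonadjacent to $v$. Thus $v$ is isolated in $H$. Since $H$ is connected, this means $H=\{v\}$, which is the trivial case already excluded.

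The only delicate point is bookkeeping the degenerate situation $|H|=1$, where we treat $K_2$ as prime. The leaf case ($u\ne v$) is exactly where the hypothesis is used, and connectivity is used only in the case $u=v$.
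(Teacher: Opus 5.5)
Your proof is correct and follows essentially the paper's route: both split on whether $x$ lies in the module, use primality of $H$ on the part of the module inside $V(H)$, and, when $x\in S$, reduce to $S\setminus\{x\}$ being a single vertex, which must be a leaf of $H$ at $v$. The only difference is order: the paper splits on $v\in M$ first and uses connectivity there, while you apply primality first and use connectivity only when $u=v$; in that case the contradiction is simply $S=V(H^+)$. Do delete the stray parenthetical in the case $u\ne v$ claiming $v$ is nonadjacent to $u$, since your next lines correctly show $v$ must be adjacent to $u$.
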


\begin{proof}
Suppose that $M$ is a proper nontrivial module of $H^+$.  If $x\notin
M$, then $M$ is a module of $H$, so primeness gives $M=V(H)$.  This is
impossible because $x$ is mixed on $V(H)$.

Now let $x\in M$.  If $v\in M$, every vertex of $H\setminus M$ is
anticomplete to $M$.  Connectedness of $H$ then gives $M=V(H^+)$, a
contradiction.  If $v\notin M$, put $S=M\setminus\{x\}$.  The vertex
$v$ is complete to $S$, and every other vertex of $H\setminus S$ is
anticomplete to $S$; hence $S$ is a module of $H$.  Since $v\notin S$
and $H$ is prime, $|S|=1$.  Its unique vertex is a leaf of $H$ adjacent
to $v$, contrary to the hypothesis.
\end{proof}

If $H_1,H_2$ are graphs and $v\in V(H_1)$, substitution of $H_2$ for
$v$ replaces $v$ by a copy of $H_2$ which is complete to every
neighbour of $v$ in $H_1$ and anticomplete to every nonneighbour of
$v$.  We use the following theorem.

\begin{theorem}[Alon--Pach--Solymosi \cite{AlonPachSolymosi2001}]
\label{thm:APS-substitution}
If $H_1$ and $H_2$ have the Erd\H{o}s--Hajnal property, then every graph
obtained by substituting $H_2$ for a vertex of $H_1$ also has the
Erd\H{o}s--Hajnal property.
\end{theorem}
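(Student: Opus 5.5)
The plan is a sampling-and-pigeonhole argument: first show that a graph with no large homogeneous set contains many induced copies of $H_1$, then pin down all of a copy except the substituted vertex, and find inside the resulting extension set a subgraph that must be $H_2$-free.

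Let $H$ be obtained by substituting $H_2$ for $v\in V(H_1)$. Put $t=|H_1|$, and let $c_1,c_2>0$ be the Erd\H{o}s--Hajnal constants of $H_1,H_2$. Fix $\delta>0$ small, to be chosen in terms of $c_1,c_2,t$. Let $G$ be $H$-free with $n=|G|$ large, and suppose for contradiction that $\operatorname{hom}(G)<n^{\delta}$. Put $m=\lceil n^{\delta/c_1}\rceil$.

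\textbf{Step 1: many copies of $H_1$.} For every $m$-subset $S\subseteq V(G)$ we have $\operatorname{hom}(G[S])\le\operatorname{hom}(G)<n^\delta\le m^{c_1}$. Since $H_1$ is Erd\H{o}s--Hajnal, $G[S]$ is therefore not $H_1$-free. So $S$ contains the image of some injective map $\varphi:V(H_1)\to V(G)$ that is an induced embedding. Each such embedding lies in exactly $\binom{n-t}{m-t}$ of the $m$-subsets. Double counting then gives at least
\[
\binom{n}{m}\Big/\binom{n-t}{m-t}\ \ge\ (n/m)^t
\]
induced embeddings of $H_1$ in $G$.

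\textbf{Step 2: pigeonhole on the rest of the copy.} Group the embeddings according to their restriction to $V(H_1)\setminus\{v\}$. There are at most $n^{t-1}$ such restrictions, so some restriction $\psi$ extends to at least $(n/m)^t/n^{t-1}=n/m^t$ embeddings. Let $W$ be the set of images of $v$ under these extensions. Then $W$ is disjoint from $\operatorname{im}\psi$. Moreover, every $w\in W$ is adjacent to $\psi(u)$ exactly when $uv\in E(H_1)$, so each vertex of $\operatorname{im}\psi$ is complete or anticomplete to $W$ according to the pattern of $H_1$.

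\textbf{Step 3: the extension set is $H_2$-free.} If $G[W]$ contained an induced copy of $H_2$, then that copy together with $\operatorname{im}\psi$ would induce $H$, which is impossible. Hence $G[W]$ is $H_2$-free, and
\[
\operatorname{hom}(G)\ \ge\ \operatorname{hom}(G[W])\ \ge\ |W|^{c_2}\ \ge\ (n/m^t)^{c_2}.
\]

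\textbf{Step 4: choice of $\delta$.} Choose $\delta\le c_1/(2t+1)$, which gives $m^t\le n^{1/2}$ for large $n$. Then Step 3 yields $\operatorname{hom}(G)\ge n^{c_2/2}$. Taking also $\delta\le c_2/2$, this contradicts $\operatorname{hom}(G)<n^{\delta}$ for large $n$. Small $n$ are absorbed by shrinking the constant, since $\operatorname{hom}(G)\ge1$. So $c=\min\{\delta,c_2/2\}$, adjusted for small $n$, works.

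\textbf{Main obstacle.} The delicate point is the counting in Step 1. One has to use labelled induced embeddings, so that the pigeonhole over restrictions in Step 2 is legitimate. One also has to balance $m$ so that it is large enough to force a copy of $H_1$ in every $m$-set, yet small enough that $n/m^t$ is still a positive power of $n$. The inequality $\binom{n}{m}/\binom{n-t}{m-t}=\prod_{i<t}\frac{n-i}{m-i}\ge(n/m)^t$ handles the first requirement, and the choice of $\delta$ in Step 4 handles the second.
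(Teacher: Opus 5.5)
Your proof is correct. It is the standard counting proof of the Alon--Pach--Solymosi theorem: every $m$-set contains an induced $H_1$, you double count labelled embeddings, pigeonhole on the restriction to $V(H_1)\setminus\{v\}$, and observe that the extension set $W$ is $H_2$-free. The paper does not reprove this theorem; it only cites it.

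One small repair is needed when you absorb small $n$. The bound $\operatorname{hom}(G)\geq1$ alone cannot give $\operatorname{hom}(G)\geq n^{c}$ with $c>0$ once $n\geq2$. Use instead that $\operatorname{hom}(G)\geq2$ whenever $|G|\geq2$, and take $c\leq\log2/\log n_0$.
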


We also use the following two established cases.

\begin{theorem}[Nguyen--Scott--Seymour
\cite{NguyenScottSeymourP5}]
\label{thm:P5-EH}
The graph $P_5$, and therefore also $\overline{P_5}$, has the
Erd\H{o}s--Hajnal property.
\end{theorem}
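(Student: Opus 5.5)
The statement has two parts, and only the second needs an argument here. For $P_5$ itself we cite the theorem of Nguyen, Scott, and Seymour~\cite{NguyenScottSeymourP5} unchanged. It provides a constant $c=c(P_5)>0$ such that every $P_5$-free graph $G$ satisfies $\operatorname{hom}(G)\geq |G|^{c}$. We do not attempt to reprove this; its proof runs through the iterative-sparsification machinery and is far beyond the scope of this paper.

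For $\overline{P_5}$ we use invariance of the Erd\H{o}s--Hajnal property under complementation, which the Introduction recalls.

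1. Let $G$ be $\overline{P_5}$-free. An induced subgraph $G[X]$ is isomorphic to $\overline{P_5}$ exactly when $\overline G[X]=\overline{G[X]}$ is isomorphic to $P_5$. Hence $\overline G$ is $P_5$-free.
2. A set is stable in $G$ exactly when it is a clique in $\overline G$. So $\alpha(G)=\omega(\overline G)$ and $\omega(G)=\alpha(\overline G)$, which gives $\operatorname{hom}(G)=\operatorname{hom}(\overline G)$.
3. Since $|\overline G|=|G|$, the $P_5$ theorem applied to $\overline G$ gives
\[
\operatorname{hom}(G)=\operatorname{hom}(\overline G)\geq |\overline G|^{c}=|G|^{c}.
\]
Thus $\overline{P_5}$ has the Erd\H{o}s--Hajnal property with the same constant $c$.

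There is no genuine obstacle. The only point needing care is the bookkeeping in step 1: the complement of an induced subgraph is the induced subgraph of the complement on the same vertex set. The same observation shows more generally that $\mathcal H$ is Erd\H{o}s--Hajnal if and only if $\overline{\mathcal H}$ is.
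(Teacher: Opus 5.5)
Your proposal is correct and matches the paper's treatment. The paper likewise cites Nguyen--Scott--Seymour for $P_5$ without proof, and obtains $\overline{P_5}$ from the invariance of the Erd\H{o}s--Hajnal property under complementation recalled in the Introduction; your verification that $\overline G$ is $P_5$-free and that $\operatorname{hom}(G)=\operatorname{hom}(\overline G)$ is exactly the routine argument behind that invariance.
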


\begin{theorem}[Chudnovsky--Scott--Seymour--Spirkl
\cite{ChudnovskyScottSeymourSpirkl2023}]
\label{thm:C5-EH}
There is a constant $\tau>0$ such that every $C_5$-free graph $Q$
satisfies
\[
                  \operatorname{hom}(Q)\geq |Q|^\tau.
\]
\end{theorem}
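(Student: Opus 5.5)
This is a cited input, and in the paper it is used as a black box. I would not reprove it here but take it from \cite{ChudnovskyScottSeymourSpirkl2023}. If a self-contained outline were wanted, the plan would follow the polynomial-Rödl route, which the same iterative-sparsification philosophy also underlies.

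The first step is the standard reduction from polynomial pure pairs to the Erd\H{o}s--Hajnal property. It suffices to find $c>0$ with the following property: for every $x\in(0,\tfrac12)$, every $C_5$-free graph $G$ that is $x$-restricted contains a pure pair $(A,B)$ with $|A|,|B|\geq x^{c}|G|$.

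Given this, the bound $\operatorname{hom}(G)\geq|G|^\tau$ follows by induction on $|G|$, in three steps.
\begin{enumerate}[label=(\roman*)]
\item By a polynomial form of Rödl's theorem (which the first step also requires to be established), either $G$ has an $x$-restricted induced subgraph on at least $x^{O(1)}|G|$ vertices, or $\operatorname{hom}(G)$ is already large.
\item A pure pair $(A,B)$ in that subgraph satisfies $\operatorname{hom}(G)\geq\max\{\operatorname{hom}(A)+\operatorname{hom}(B),\operatorname{hom}(A),\operatorname{hom}(B)\}$. The sum is attained when it is complete and $\omega$ is the maximum, or anticomplete and $\alpha$ is the maximum; otherwise one only gets the maximum of the two.
\item Applying induction to both sides, one chooses $\tau$ small in terms of $c$ so that the recursion closes.
\end{enumerate}
Since $C_5$ is self-complementary, we may assume that $G$ itself, rather than $\overline G$, has maximum degree at most $x|G|$, and we aim for an anticomplete pair.

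The second step, and the real content, is finding the polynomial anticomplete pair in a sparse $C_5$-free graph. I would pick a vertex and run a breadth-first layering of a connected piece (if $G$ is disconnected, two large components directly give the pair). Sparsity forces many layers or large layers. I would then analyse how a vertex in layer $t+1$ attaches to layer $t$ and to its own layer. A vertex with neighbours in two far-apart parts of a layer, together with shortest paths back toward the root, tends to close an induced cycle of length five; this constrains the attachments. Iterating the argument inside layers, in the ``grading'' style of the CSSS paper, yields two sets of size $x^{O(1)}|G|$ with no edges between them.

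I expect this second step to be the main obstacle. Unlike $P_5$, the five-cycle fails the strong Erd\H{o}s--Hajnal property, so a linear-size pure pair is not available. The exponent must be tracked carefully through each iteration so that only polynomial losses in $x$ accrue. Since the present paper only needs the final statement, we simply invoke \cite{ChudnovskyScottSeymourSpirkl2023}.
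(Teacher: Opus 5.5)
Citing \cite{ChudnovskyScottSeymourSpirkl2023} is exactly what the paper does. It states the theorem as an external input and gives no proof, so as a justification of the statement your proposal is fine.

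The self-contained outline, however, would not work as written, because its first step aims at a false statement. Suppose every $x$-restricted $C_5$-free graph had a pure pair with both sides at least $x^c|G|$, for every $x\in(0,\tfrac12)$. Take the fixed value $x=\tfrac14$ and combine it with the ordinary (non-polynomial) R\"odl theorem. This gives a pure pair of linear size in every $C_5$-free graph, that is, the strong Erd\H{o}s--Hajnal property for $C_5$, which you yourself note fails. Concretely, take a sparse random graph and delete a vertex from each cycle of length at most five. The result is $C_5$-free and $\tfrac14$-restricted. It contains no $C_4$, hence no complete pair with both sides of order two, and it has no anticomplete pair of linear size.

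The recursion in (ii)--(iii) also cannot close when the pure pairs lose a power of $x$. With $\operatorname{hom}$ alone, a complete pair on which the maximum is attained by $\alpha$ gives only $\operatorname{hom}(G)\geq\max\{\operatorname{hom}(A),\operatorname{hom}(B)\}\geq(x^c|G|)^\tau$, which is smaller than $|G|^\tau$. Tracking $\alpha\omega$ instead gives $\alpha(G)\omega(G)\geq\alpha(A)\omega(A)+\alpha(B)\omega(B)$. But the induction then needs $2x^{c\tau}\geq1$, which forces $x$ to be bounded below by a constant. That is exactly the linear regime ruled out above.

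Finally, step (i) already contains the whole theorem. Take an $x$-restricted induced subgraph of order at least $x^C|G|$ with $x=|G|^{-1/(C+1)}$. The greedy bound gives it a clique or stable set of order at least $1/(2x)$, so the pure-pair step adds nothing. Polynomial-size pure pairs have to be used to pass to ever more restricted induced subgraphs, as in the iterative reductions of Section~\ref{sec:hjz-quantitative}. They cannot simply be fed into a clique--stable-set recursion.
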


\subsection{The Huang--Ju--Zhou framework}

We follow Huang--Ju--Zhou for the definitions that enter the
reduction~\cite{HuangJuZhou2026}, identifying explicitly the variants
used below.  A finite family $\mathcal F$ is
\emph{leaf-reducible} if some $H\in\mathcal F$ has a vertex $v$ of
degree one and
\[
                  (\mathcal F\setminus\{H\})\cup\{H-v\}
\]
has the Erd\H{o}s--Hajnal property.

A finite family $\mathcal F$ is \emph{generalized nice} if there are
constants
\[
 c_1\geq3,\quad c_2\geq8,\quad c_3,c_4,c_5,c_8>0,
 \quad c_6\geq1,\quad c_7\geq4
\]
such that, for every $\overline{\mathcal F}$-free graph $G$ and every
$0<\varepsilon<1/2$, at least one of the following holds:
\begin{enumerate}[label=(GN\arabic*)]
 \item $G$ has an
       $(\varepsilon^{-1},\varepsilon^{c_1}|G|)$-blockade whose blocks
       are pairwise complete or weakly $\varepsilon^{c_2}$-sparse;
 \item $G$ has a clique or stable set of order at least
       $(\varepsilon^{c_3}|G|)^{c_4}$;
 \item $G$ has a complete or anticomplete
       $(k,|G|/k^{c_5})$-blockade with
       $k\geq\varepsilon^{-c_6}$;
 \item $G$ has an $\varepsilon^{c_7}$-restricted induced subgraph of
       order at least $\varepsilon^{c_8}|G|$.
\end{enumerate}

The family $\mathcal F$ has \emph{property $(*)$} if there are
$d_1,d_2,d_3>0$ such that every $(\ell,w)$-comb in every
$\overline{\mathcal F}$-free graph, where $\ell,w\geq4$, equipped with
a vertex $r$ as in \eqref{eq:root-outside}, has one of the following
outcomes:
\begin{enumerate}[label=(\ensuremath{*}\arabic*)]
 \item $\operatorname{hom}(G)\geq w^{d_1}$;
 \item a complete or anticomplete $(k,w/k^{d_2})$-blockade with
       $k\geq\ell^{d_3}$;
 \item a pure $(\ell,w/\ell^2)$-blockade.
\end{enumerate}

For the applications below, we use the following bidirectional variant
of wonderfulness.  The family $\mathcal F$
is \emph{wonderful} if there is $a\geq6$ such that, for every
$y\in(0,1/2)$ and every $\overline{\mathcal F}$-free graph $G$, the
following holds.  If
$\mathcal B=(B_1,\ldots,B_\ell)$ is an $(\ell,w)$-blockade with
$\ell\geq y^{-a}$, all blocks are anticonnected and have equal order,
and, for all distinct $i,j$, either $B_i$ is complete to $B_j$ or
$B_i$ is $y^a$-sparse to $B_j$, then either
\begin{enumerate}[label=(W\arabic*)]
 \item $G$ has a $y^4$-restricted induced subgraph of order at least
       $w$; or
 \item for some $i\in[\ell]$, at most $y|G|$ vertices
       $v\in V(G)\setminus\bigcup_jB_j$ satisfy
       $0<|N(v)\cap B_i|<|B_i|/2$.
\end{enumerate}
Here the quantifier over $i,j$ is over ordered pairs; consequently
every noncomplete pair is sparse in both directions.

The framework of Huang--Ju--Zhou is organised around the following two
implications~\cite{HuangJuZhou2026}:
\begin{enumerate}
 \item If $\mathcal F$ is leaf-reducible and has property $(*)$, then
       $\mathcal F$ is generalized nice.
 \item If $\mathcal F$ is leaf-reducible, wonderful, and generalized
       nice, then $\mathcal F$ has the Erd\H{o}s--Hajnal property.
\end{enumerate}
To make the variants used below explicit,
Section~\ref{sec:hjz-quantitative} proves the relative leaf lemma, the
iterations in the precise forms used here, a quantitative extension of
the first implication, and a direct version of the second.
These general statements are applied here only to $F$; they are not
graph-specific proofs of the $E$-graph or Bird cases.

We shall use the following sufficient condition for wonderfulness.  Let
$H$ be a graph with distinct special vertices $v_1,v_2$.  Define $H^+$
by adding a new vertex adjacent only to $v_1,v_2$ and, if necessary,
adding the edge $v_1v_2$.  Define $H^-$ by adding a new vertex adjacent
only to $v_1,v_2$ and, if necessary, deleting the edge $v_1v_2$.

\begin{lemma}[Wonderfulness criteria]
\label{lem:HJZ-wonderful-criterion}
Suppose that one of the following holds.
\begin{enumerate}[label=(\roman*)]
\item Some member of $\mathcal F$ is an induced subgraph of the
      $1$-subdivision of $K_{1,t}$ for some $t\geq1$.
\item There is a graph $H$ with two special vertices for which
      $\{H\}\cup\overline{\mathcal F}$ has the Erd\H{o}s--Hajnal
      property and neither $H^+$ nor $H^-$ is
      $\overline{\mathcal F}$-free.
\end{enumerate}
Then $\mathcal F$ is wonderful.
\end{lemma}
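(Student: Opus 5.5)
The plan is to deduce both criteria from the corresponding statements of Huang--Ju--Zhou~\cite{HuangJuZhou2026}, which go back to the Nguyen--Scott--Seymour viral machinery~\cite{NguyenScottSeymourDensityIV}, and to check that our bidirectional definition of wonderfulness is no stronger than theirs. The comparison comes first. In our definition the hypothesis on $\mathcal B$ is imposed on ordered pairs, so every noncomplete pair $(B_i,B_j)$ is $y^a$-sparse in both directions. Any blockade satisfying our hypothesis therefore satisfies the one-directional hypothesis (``complete, or $B_i$ is $y^a$-sparse to $B_j$ for $i>j$''). The conclusions (W1) and (W2) are identical in the two versions. Hence any family that is wonderful in the Huang--Ju--Zhou sense, with some exponent $a$, is wonderful in our sense with the same $a$, after enlarging $a$ to at least $6$ if needed. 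Enlarging $a$ only weakens the hypothesis $\ell\geq y^{-a}$ and strengthens the sparsity assumption $y^a$, so this is harmless. It then suffices to quote their criteria (i) and (ii).

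Since the paper insists on relative-size forms, I would also record a self-contained outline of the underlying argument, in case their normalisation differs. Suppose (W2) fails for every $i$. Then for each $i$ there is a set $D_i$ of more than $y|G|$ outside vertices $v$ with $0<|N(v)\cap B_i|<|B_i|/2$. Because $B_i$ is anticonnected, each such $v$ has a nonneighbour $x\in B_i$ and a neighbour $x'\in B_i$ with $xx'$ a nonedge. By averaging over $i$ (there are $\ell\geq y^{-a}$ blocks but only $|G|$ vertices), many vertices $v$ lie in $D_i$ for many indices $i$.

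Next apply the sparsity in both directions together with the absence of a $y^4$-restricted induced subgraph of order $w$ (the negation of (W1)). This lets us pass to large sub-blocks that are pure to one another and to the relevant $v$'s, and that retain the partial attachments. In this configuration the outside vertices act as ``connectors'' attached to exactly two prescribed vertices, with the edge between those two vertices either present or absent.
\begin{itemize}
\item Under criterion (ii), the Erd\H{o}s--Hajnal property of $\{H\}\cup\overline{\mathcal F}$ applied inside the pure structure yields an induced copy of $H$ (otherwise we obtain a homogeneous set, which yields (W1) via restrictedness). Attaching a connector at the two special vertices gives $H^+$ or $H^-$, which is not $\overline{\mathcal F}$-free, a contradiction.
\item Under criterion (i), the same construction with $t$ connectors at a common centre builds an induced $1$-subdivision of $K_{1,t}$ in $\overline G$. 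This is again a contradiction.
\end{itemize}

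The main obstacle is the quantitative cleaning step: extracting pure sub-blocks of size polynomial in $w$ while preserving both the mixed attachments of the connectors and the anticonnectedness. My first attempt would be repeated random sampling combined with the sparsity bound $y^a$, using $a\geq6$ to absorb the $y^4$ loss. I expect this is exactly what the cited proof does, so in the paper I would rely on the citation plus the hypothesis comparison above.
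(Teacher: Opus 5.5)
\textbf{Verdict: genuine gap.} Your proposal rests on a citation whose reliability, in the form you need, is exactly what is in doubt, and your fallback outline does not contain the argument.

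\textbf{The citation route.} Your comparison of definitions goes in the right direction: a bidirectionally sparse blockade is one-directionally sparse, so wonderfulness in a one-directional sense would imply the notion used here. (A small slip: enlarging $a$ makes the requirement $\ell\ge y^{-a}$ \emph{stronger}, not weaker. That is exactly why the enlargement is harmless.) But if the Huang--Ju--Zhou hypothesis is one-directional, you are quoting the stronger statement, and the natural proof of criterion (ii) does not establish it. The reason is as follows.
\begin{itemize}
\item The representatives of the two special pattern vertices must lie in $N(v)\cap B_{i}$, which may be a single vertex. So they have to be chosen first, with no exclusions.
\item Every later representative must then avoid their neighbourhoods. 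This needs each vertex of a special block to have few neighbours in the later block.
\item One-directional sparsity gives this only when the later block has the smaller index. A single vertex of $N(v)\cap B_{i_1}$ can be complete to $B_k$ while $B_k$ is still $y^a$-sparse to $B_{i_1}$.
\end{itemize}
This is why the paper does not lean on the citation and proves the bidirectional form directly.

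\textbf{The fallback outline.} It does not fill the gap, and its central step points the wrong way. No cleaning to pure sub-blocks, and no random sampling, is needed. It would in fact be harmful in two ways. Sub-blocks that are ``pure to the relevant $v$'s'' lose precisely the mixed attachment $0<|N(v)\cap B_i|<|B_i|/2$ you need. Shrinking blocks also endangers the order-$w$ requirement in (W1).

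\textbf{The paper's argument.} You already have the double-counting step, which gives (W2) or a vertex $v$ with $|I(v)|>y\ell$. The paper then proceeds as follows.
\begin{enumerate}
\item It works with the pattern graph $J$ on $I(v)$, whose edges are the complete pairs of blocks.
\item In case (i), each $B_i$ with $i\in I(v)$ contains a nonedge $b_ib_i'$ crossing $N(v)$, by anticonnectedness. Together with $v$ and a $t$-clique of $J$, these pairs induce in $G$ the complement of the $1$-subdivision of $K_{1,t}$. So $J$ is $K_t$-free, and Ramsey gives a homogeneous set of order $|J|^{1/(2t)}$.
\item In case (ii), greedy choice of single representatives shows that $J$ is $\{H\}\cup\overline{\mathcal F}$-free. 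Each required nonedge excludes at most $y^aW$ vertices, which uses the bidirectional hypothesis. For $H$, the two special representatives are taken in $N(v)$ and all others outside $N(v)$, where at least $W/2$ vertices are available.
\item A clique or stable set $R$ of $J$, not of $G$, then yields (W1) directly. The union of the blocks of $R$ has order $|R|W\ge w$. It is $y^4$-restricted because $|R|^{-1}<(y\ell)^{-\kappa}\le y^{\kappa(a-1)}\le y^5$.
\end{enumerate}
The last step forces $a\ge 1+5/\kappa$ and $(d-1)2^{-a}<1/2$. Here $\kappa$ is the Ramsey or Erd\H{o}s--Hajnal exponent and $d$ is the largest order of a pattern to be embedded. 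So ``$a\ge6$'' alone is not enough.
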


\begin{proof}
We prove a bidirectional version of
\cite[Lemma~2.1]{HuangJuZhou2026} sufficient for our application.  This
may coincide with the intended convention there; here the ordered-pair
quantifier is explicit.  In case~(i), put
$d=1$ and
$\kappa=1/(2t)$.  In case~(ii), let $\kappa>0$ be an
Erd\H{o}s--Hajnal exponent for
$\{H\}\cup\overline{\mathcal F}$ and put
\[
 d=\max\{|Q|:Q\in\{H\}\cup\overline{\mathcal F}\}.
\]
Choose $a\geq\max\{6,1+5/\kappa\}$ so large that
$(d-1)2^{-a}<1/2$.  We verify wonderfulness with this $a$.

Fix $0<y<1/2$, a $\overline{\mathcal F}$-free graph $G$, and a
blockade $(B_1,\ldots,B_\ell)$ satisfying the hypotheses in the
definition.  Write $W=|B_i|$, and, for every vertex
$v\notin\bigcup_iB_i$, put
\[
 I(v)=\{i:0<|N(v)\cap B_i|<W/2\}.
\]
If $|I(v)|\leq y\ell$ for every such $v$, double counting the pairs
$(v,i)$ with $i\in I(v)$ gives an index $i$ for which at most
$y|G|$ outside vertices are counted.  This is outcome~(W2).

We may therefore choose $v$ with $|I(v)|>y\ell$.  Let $J$ be the
graph on $I(v)$ in which $ij$ is an edge exactly when $B_i$ is
complete to $B_j$.  We claim that $J$ has a clique or stable set of
order at least $|J|^\kappa$.

In case~(i), $J$ has no clique of order $t$.  Indeed,
anticonnectedness of $B_i$ gives a nonedge $b_ib'_i$ in $B_i$ with
$vb_i$ an edge and $vb'_i$ a nonedge: take an edge of
$\overline{G[B_i]}$ crossing the nonempty sets $N(v)\cap B_i$ and
$B_i\setminus N(v)$.  A clique on $t$ pattern vertices, together with
these pairs and $v$, would induce in $G$ the complement of the
$1$-subdivision of $K_{1,t}$.  It would therefore contain an induced
member of $\overline{\mathcal F}$, a contradiction.  If $t=1$, this
contradicts the nonemptiness of $J$, so the assumption that (W2) fails
is impossible and (W2) holds.  Hence assume $t\geq2$, and put
$n=|J|$.  If $n<2^{2t}$, a clique or stable set of order two (or the sole
vertex when $n=1$) has order at least $n^{1/(2t)}$.  If
$n\geq2^{2t}$, set $s=\lceil n^{1/(2t)}\rceil$.  The standard bound
$R(t,s)\leq s^t$, together with
\[
                 s^t\leq2^t n^{1/2}\leq n,
\]
gives a stable set of order $s$ because $J$ is $K_t$-free.  In either
case, $J$ has a clique or stable set of order at least
$n^{1/(2t)}=n^\kappa$.

In case~(ii), we instead claim that $J$ is
$\{H\}\cup\overline{\mathcal F}$-free.  If an induced subgraph of $J$
is isomorphic to some $Q\in\overline{\mathcal F}$, choose
representatives of its pattern vertices successively.  An edge of the
pattern joins complete blocks.  For each required nonedge to an earlier
representative, the ordered-pair sparsity hypothesis excludes at most
$y^aW$ vertices of the new block.  Since fewer than $d$
representatives have already been chosen and $(d-1)y^a<1$, the
selection can be completed.  The representatives induce $Q$ in $G$,
a contradiction.

If $J$ contains an induced copy of $H$, index its two special pattern
vertices first.  Choose their representatives in $N(v)$; this is
possible because their indices belong to $I(v)$.  For every other
pattern vertex, choose a representative outside $N(v)$ and realise all
required nonedges to earlier representatives.  At each step, fewer
than
\[
                 \bigl(1/2+(d-1)y^a\bigr)W<W
\]
vertices are excluded.  Pattern edges are automatic because the
corresponding blocks are complete.  The selected representatives,
together with $v$, therefore induce $H^+$ or $H^-$, according to the
adjacency of the first two representatives.  Each of these two graphs
contains an induced member of $\overline{\mathcal F}$, contrary to the
choice of $G$.  Thus $J$ is
$\{H\}\cup\overline{\mathcal F}$-free, and its Erd\H{o}s--Hajnal
property proves the claim.

Let $R$ be the clique or stable set just obtained, and write
\[
                    r=|R|\geq |I(v)|^\kappa>(y\ell)^\kappa.
\]
Put $S=\bigcup_{i\in R}B_i$.  If $R$ is stable, every noncomplete
pair of its blocks is $y^a$-sparse in both directions, and hence
\[
             \Delta(G[S])\leq(r^{-1}+y^a)|S|.
\]
If $R$ is a clique, all its blocks are pairwise complete, and
\[
             \Delta(\overline{G[S]})\leq r^{-1}|S|.
\]
Since $\ell\geq y^{-a}$,
\[
 r^{-1}< (y\ell)^{-\kappa}
       \leq y^{\kappa(a-1)}\leq y^5,
 \qquad y^a\leq y^6.
\]
Thus in either case $G[S]$ is $y^4$-restricted, because
$2y^5\leq y^4$.  Finally $|S|=rW\geq W\geq w$, so outcome~(W1) holds.
\end{proof}

We also need the leaf/co-leaf family closure in Erd\H{o}s--Hajnal form.  It is
Corollary~1.8 of~\cite{HuangJuZhou2026}, obtained from the corresponding
viral theorem of Nguyen--Scott--Seymour
\cite[Theorem~7.8]{NguyenScottSeymourDensityIV} and the standard
finite-family extension of the Erd\H{o}s--Hajnal/viral equivalence of
Buci\'c, Fox, and Pham~\cite[Theorem~4]{BucicFoxPham2024}; this extension
is stated in~\cite[Theorem~1.3]{HuangJuZhou2026}.

\begin{theorem}[Leaf/co-leaf family closure]
\label{thm:leaf-coleaf-closure}
Let $\mathcal Q$ be a finite graph family.  Suppose
$F_1,\overline{F_2}\in\mathcal Q$, and let $v_i$ be a leaf of $F_i$.
If both families
\[
 \{F_1-v_1\}\cup(\mathcal Q\setminus\{F_1\})
 \quad\text{and}\quad
 \{\overline{F_2-v_2}\}\cup
       (\mathcal Q\setminus\{\overline{F_2}\})
\]
have the Erd\H{o}s--Hajnal property, then so does $\mathcal Q$.
\end{theorem}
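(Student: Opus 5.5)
The plan is to route the statement through the notion of \emph{virality} (polynomial rödl-type sparsification), using the Erd\H{o}s--Hajnal/viral equivalence in both directions. Roughly, a finite family $\mathcal Q$ is viral if there is $c>0$ such that for every $x\in(0,1/2)$, every $\mathcal Q$-free graph $G$ has an $x$-restricted induced subgraph of order at least $x^c|G|$. The finite-family form of the theorem of Buci\'c, Fox and Pham~\cite[Theorem~4]{BucicFoxPham2024}, as stated in~\cite[Theorem~1.3]{HuangJuZhou2026}, says that a finite family has the Erd\H{o}s--Hajnal property if and only if it is viral. So the proof has three steps: convert the two hypotheses into virality, apply a closure theorem for viral families, and convert back.

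First, the families
\[
 \mathcal Q_1=\{F_1-v_1\}\cup(\mathcal Q\setminus\{F_1\}),\qquad
 \mathcal Q_2=\{\overline{F_2-v_2}\}\cup(\mathcal Q\setminus\{\overline{F_2}\})
\]
are Erd\H{o}s--Hajnal by hypothesis, so the ``only if'' direction of the equivalence makes both of them viral. Virality is invariant under complementation, since $x$-restricted is a complement-symmetric condition. Hence $\overline{\mathcal Q_2}$ is viral as well, and $\overline{\mathcal Q_2}$ contains $F_2-v_2$ together with $\overline{\mathcal Q}\setminus\{F_2\}$.

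Second, invoke the Nguyen--Scott--Seymour leaf/co-leaf theorem for viral families~\cite[Theorem~7.8]{NguyenScottSeymourDensityIV}. Its content is that deleting a leaf from a member of a family, and deleting a co-leaf (a leaf of the complement) from a member, can each be undone while preserving virality, provided both reduced families are viral. Applied to $\mathcal Q$ with the leaf $v_1$ of $F_1\in\mathcal Q$ and the co-leaf $v_2$ of $\overline{F_2}\in\mathcal Q$, it yields that $\mathcal Q$ itself is viral. The ``if'' direction of the Buci\'c--Fox--Pham equivalence then gives the Erd\H{o}s--Hajnal property for $\mathcal Q$.

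The step I expect to be the real obstacle is the bookkeeping at the interface between the cited results rather than any new combinatorics. One has to check three things:
\begin{enumerate}[label=(\roman*)]
 \item The precise form of virality in~\cite{NguyenScottSeymourDensityIV} matches the one used in the family version of the Buci\'c--Fox--Pham equivalence. In particular, any allowance for small graphs, or for the alternative of a large homogeneous set, must be absorbed by adjusting constants.
 \item Theorem~7.8 is formulated for finite families, with the leaf hypothesis on one member and the co-leaf hypothesis on another. It must not require $F_1\neq\overline{F_2}$ or assume connectivity.
 \item The complementation step is applied to the correct family.
\end{enumerate}
If~(ii) is stated there only in a single-graph or one-sided form, I would instead apply the leaf version twice. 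First apply it to $\mathcal Q$ with respect to $v_1$. Then apply it to $\overline{\mathcal Q}$ with respect to $v_2$, using complement invariance. At each stage verify that the intermediate family appearing as hypothesis is exactly $\mathcal Q_1$ or $\overline{\mathcal Q_2}$.
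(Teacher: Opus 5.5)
Your main route is the paper's own: the paper does not reprove this statement but cites it as Corollary~1.8 of Huang--Ju--Zhou, which is obtained exactly as you describe. One converts the reduced families to virality via the finite-family Buci\'c--Fox--Pham equivalence, applies the Nguyen--Scott--Seymour viral leaf/co-leaf theorem (Theorem~7.8) with both hypotheses together, and converts back. Drop your fallback of two separate one-sided leaf applications, though: a one-sided ``adding a leaf preserves virality'' result, iterated from $K_1$, would make every tree Erd\H{o}s--Hajnal (settling $P_6$, among much else), so no such result is available and the leaf and co-leaf hypotheses must be used jointly.
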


\section{The target graph and its local structure}
\label{sec:target-local}

Recall that $F$ has vertex set $\{a,b,c,d,e,f\}$ and edge set
\[
 E(F)=\{ab,bc,cd,de,af,bf,df\}.
\]
Put $T=\overline F$.  Thus
\begin{equation}
 E(T)=\{ac,ad,ae,bd,be,ce,cf,ef\}.
 \label{eq:T-edge-set}
\end{equation}
The seven nonedges of $T$ are
\begin{equation}
 ab,af,bc,bf,cd,de,df.
 \label{eq:T-nonedge-set}
\end{equation}
In particular, the degrees of $a,b,c,d,e,f$ in $T$ are respectively
\[
                         3,2,3,2,4,2.
\]
The vertex $e$ is the unique vertex of degree four, its unique nonneighbour
is $d$, and $d$ has degree two.

\begin{lemma}
\label{lem:F-prime}
The graphs $F$ and $T$ are prime.
\end{lemma}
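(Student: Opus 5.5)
Since a set is a module of a graph exactly when it is a module of its complement, primeness is invariant under complementation, so it suffices to prove that $F$ is prime; the statement for $T=\overline F$ then follows. For $F$ I would use Lemma~\ref{lem:prime-leaf-extension}. The vertex $e$ is a leaf of $F$ with neighbour $d$, and $F-e$ is the house on $\{a,b,c,d,f\}$ with edges $ab,bc,cd,af,bf,df$, that is, $\overline{P_5}$. This graph is connected, and its only vertex of degree less than two would have to be a leaf. The degrees in $F-e$ are $a\colon2$, $b\colon3$, $c\colon2$, $d\colon2$, $f\colon3$, so $F-e$ has no leaf at all, and in particular no leaf whose neighbour is $d$. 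Hence, once $F-e$ is known to be prime, the lemma gives primeness of $F$.

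It remains to see that $\overline{P_5}$ is prime. Again by complement invariance this is equivalent to primeness of $P_5$, which I would check directly. Let $S$ be a nontrivial module of the path $p_1p_2p_3p_4p_5$. If $S$ contains two consecutive vertices $p_i,p_{i+1}$, then any path neighbour of $S$ outside $S$ is adjacent to an endpoint of a maximal run of $S$ but not to the vertex two steps away inside that run (or, if the run has length two, to its other end), which is a contradiction. So $S$ is contained in no such configuration unless it absorbs everything, giving $S=V$. Otherwise $S$ is stable with $|S|\ge2$. One then checks the few cases, namely $\{p_1,p_3\}$, $\{p_1,p_4\}$, $\{p_1,p_5\}$, $\{p_2,p_4\}$, $\{p_1,p_3,p_5\}$ and their symmetric images. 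In each case some outside vertex is mixed on $S$: $p_2$, $p_2$, $p_2$, $p_1$ and $p_2$ respectively.

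Alternatively, the primeness of $P_5$ and the house may simply be cited from \cite[p.~3]{HuangJuZhou2026}, as the introduction already does for the base graphs. The only mild obstacle is making the consecutive-vertex case airtight. For that, I would phrase it as follows: if $S\ne V$ is a module containing an edge $p_ip_{i+1}$, choose a vertex $u\notin S$ adjacent to some vertex of $S$; such a $u$ exists by connectivity. Then $u$ must be complete to $S$. But in a path $u$ has at most two neighbours, and they are nonadjacent, while $S$ contains the adjacent pair $p_i,p_{i+1}$ and at least one other vertex adjacent to $u$. Completeness of $u$ to $S$ would therefore give a triangle or force $|S|\le2$ with both vertices adjacent to $u$, and either outcome is impossible in a path.
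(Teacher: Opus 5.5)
Your proof is correct, but it takes a different route from the paper's. The paper does not invoke Lemma~\ref{lem:prime-leaf-extension} here. It runs a direct case analysis on a putative module $S$ of $F$:
\begin{itemize}
\item if $e\in S$ and $d\notin S$, then $d$ is complete to $S$, so $S\subseteq\{c,e,f\}$, and $b$ and $a$ rule this out;
\item if $d,e\in S$, the vertices $c,b,a,f$ are forced into $S$ one by one;
\item if $e\notin S$, then $e$ keeps $d$ out of $S$, after which $d$ confines $S$ to $\{a,b\}$ or $\{c,f\}$, and these are broken by $c$ and $a$ respectively.
\end{itemize}
The first two cases are the leaf-extension argument specialised to $F$. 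In the last case the paper uses $e$ to exclude $d$ at once, so it never needs full primeness of the house.

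Your version reuses the general lemma, which is legitimate: $F-e$ is connected with minimum degree two and $N_F(e)=\{d\}$. It then pushes the remaining work into the standard fact that $P_5$ is prime. This is more modular and matches how the introduction handles the $E$-graph and the Bird. The paper's argument, by contrast, is self-contained and needs no external primeness fact.

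Your $P_5$ check needs two small repairs.
\begin{itemize}
\item The witness for $\{p_1,p_3\}$ is wrong. $p_2$ is adjacent to both $p_1$ and $p_3$, so it is complete on that pair, not mixed. Use $p_4$, which sees $p_3$ but not $p_1$.
\item Your first formulation of the consecutive-vertex case is imprecise, because an outside neighbour of $S$ may touch only a singleton run. Your rephrased version is correct and suffices on its own: an outside neighbour $u$ of $S$ must be complete to $S$, hence adjacent to both ends of the edge $p_ip_{i+1}$, giving a triangle. The remark about ``at least one other vertex adjacent to $u$'' can be dropped.
\end{itemize}
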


\begin{proof}
It is enough to prove the assertion for $F$, since complementation
preserves modules.  Suppose that $S$ is a proper module of $F$ with at
least two vertices.  If $e\in S$ and $d\notin S$, then $d$ is adjacent
to $e$ and hence complete to $S$.  Thus $S\subseteq\{c,e,f\}$.  But $b$
distinguishes $c$ from $e$, while $a$ distinguishes $f$ from $e$, a
contradiction.  If $e,d\in S$, then $c,b,a,f$ are forced into $S$ in
that order: each displayed vertex distinguishes two vertices already
known to belong to $S$.  This gives $S=V(F)$, again a contradiction.

Suppose now that $e\notin S$.  If $d\in S$, the vertex $e$, which is
adjacent only to $d$, distinguishes $d$ from every possible second
member of $S$.
Thus $d\notin S$, and $S\subseteq\{a,b,c,f\}$.  The outside vertex $d$
is adjacent to $c,f$ and nonadjacent to $a,b$, so $S$ is contained in one of
these pairs.  The pair $\{c,f\}$ is distinguished by $a$, and
$\{a,b\}$ is distinguished by $c$.  No proper nontrivial module
exists.
\end{proof}

We use the definitions of blockades and combs from
Section~\ref{sec:preliminaries}.  Throughout the structural argument,
$((h_i,B_i):i\in[\ell])$ is an $(\ell,w)$-comb in a $T$-free graph,
and $r$ is a vertex outside the comb which is complete to every tooth
and anticomplete to every handle.  No condition is imposed on the
handle graph or on edges inside or between the teeth.

\subsection{The hypotheses from the iterative-sparsification framework}

\begin{proposition}[Leaf reduction and the complementary pair]
\label{prop:f124-leaf-pair}
The singleton $\{F\}$ is leaf-reducible.  The family $\{F,T\}$ has the
Erd\H{o}s--Hajnal property.
\end{proposition}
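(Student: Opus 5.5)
For leaf-reducibility, observe that $e$ has degree one in $F$: its only neighbour is $d$. By definition it then suffices to show that $\{F-e\}$ has the Erd\H{o}s--Hajnal property. The graph $F-e$ has vertices $a,b,c,d,f$ and edges $ab,bc,cd,af,bf,df$. Its complement has edges $ac,ad,bd,cf$, which form the path $b-d-a-c-f$. Hence $F-e\cong\overline{P_5}$, and Theorem~\ref{thm:P5-EH} gives the property for $\{F-e\}$. So $\{F\}$ is leaf-reducible.

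For the family $\{F,T\}$, I would apply Theorem~\ref{thm:leaf-coleaf-closure} with $\mathcal Q=\{F,T\}$, $F_1=F$, $v_1=e$, and $F_2=F$, so that $\overline{F_2}=T\in\mathcal Q$, again with $v_2=e$. This leaves two families to check.

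The first family is $\{F-e\}\cup(\mathcal Q\setminus\{F\})=\{\overline{P_5},T\}$. Every $\overline{P_5}$-free graph is $\{\overline{P_5},T\}$-free, so any Erd\H{o}s--Hajnal exponent for $\overline{P_5}$ also works for this family. In general, a family has the property as soon as one of its members does, because forbidding more graphs only shrinks the class.

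The second family is $\{\overline{F-e}\}\cup(\mathcal Q\setminus\{T\})=\{P_5,F\}$. It contains $P_5$, which is Erd\H{o}s--Hajnal by Theorem~\ref{thm:P5-EH}, so the same monotonicity applies. Theorem~\ref{thm:leaf-coleaf-closure} then gives that $\{F,T\}$ is Erd\H{o}s--Hajnal.

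The only points needing care are bookkeeping. One is identifying $F-e$ with $\overline{P_5}$, which requires listing the complement edges correctly as above. The other is matching the closure theorem's hypotheses: $F_1$ and $\overline{F_2}$ are both members of $\mathcal Q$ (here $F$ and $T$), and $v_2$ is a leaf of $F_2=F$, not of $T$. I expect no genuine obstacle beyond these checks.
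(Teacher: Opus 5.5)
Your proof is correct and matches the paper's argument: you identify $F-e$ with $\overline{P_5}$ through its complement path (the paper writes the same path as $f-c-a-d-b$), then apply the leaf/co-leaf closure with $\mathcal Q=\{F,T\}$, $F_1=F_2=F$, $v_1=v_2=e$. This reduces to the families $\{\overline{P_5},T\}$ and $\{P_5,F\}$, each settled by Theorem~\ref{thm:P5-EH} and the remark that a family is Erd\H{o}s--Hajnal as soon as one of its members is, which is exactly the step the paper uses.
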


\begin{proof}
The vertex $e$ is a leaf of $F$.  On $\{a,b,c,d,f\}$, the complement of
$F-e$ is the path
\[
                         f-c-a-d-b.
\]
Thus $F-e=\overline{P_5}$, the house, and is Erd\H{o}s--Hajnal.  This is
exactly leaf-reducibility for the singleton family.

For the second assertion, apply
Theorem~\ref{thm:leaf-coleaf-closure} to
$\mathcal Q=\{F,T\}$ with $F_1=F_2=F$ and $v_1=v_2=e$.  The two
reduced families are exactly
\[
                  \{\overline{P_5},T\}
       \quad\hbox{and}\quad
                  \{P_5,F\}.
\]
A graph free of the first family is in particular
$\overline{P_5}$-free, and a graph free of the second is in particular
$P_5$-free.  Theorem~\ref{thm:P5-EH}, together with complementation,
therefore gives both hypotheses of the closure theorem.  Thus
$\{F,T\}$ has the Erd\H{o}s--Hajnal property using only the established
result for $P_5$.
\end{proof}

\begin{proposition}[Wonderfulness]
\label{prop:f124-wonderful}
The singleton family $\{F\}$ is wonderful in the sense of
the directionally precise definition in Section~\ref{sec:preliminaries}.
\end{proposition}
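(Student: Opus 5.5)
The plan is to apply Lemma~\ref{lem:HJZ-wonderful-criterion}. Criterion~(i) is unavailable: $F$ contains the triangle $abf$, while a $1$-subdivision of a star is triangle-free. So everything rests on criterion~(ii) with $\overline{\mathcal F}=\{T\}$. We need a graph $H$ with two special vertices $v_1,v_2$ such that:
\begin{enumerate}[label=(\alph*)]
\item the family $\{H,T\}$ is Erd\H{o}s--Hajnal;
\item both $H^+$ and $H^-$ contain an induced copy of $T$.
\end{enumerate}
Once such an $H$ is exhibited, the lemma immediately gives that $\{F\}$ is wonderful in the bidirectional sense.

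First I would narrow down $H$.

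\emph{$H$ must have at least six vertices.} If $|H|=5$, then $H^+$ and $H^-$ are six-vertex graphs whose edge counts differ by exactly two. So they cannot both be isomorphic to $T$.

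\emph{$H$ must not contain $T$.} If it did, $\{H,T\}$-freeness would reduce to $T$-freeness, which is exactly what is unknown.

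\emph{The natural candidate is $H=F$.} The available Erd\H{o}s--Hajnal families are those handled by Theorem~\ref{thm:P5-EH} and the family $\{F,T\}$ of Proposition~\ref{prop:f124-leaf-pair}. For $H=F$, condition~(a) is exactly the second assertion of Proposition~\ref{prop:f124-leaf-pair}. This is presumably a second reason that proposition was proved.

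It then remains to choose $v_1,v_2\in V(F)$ so that both seven-vertex graphs $F^+$ and $F^-$ contain an induced $T$. A copy of $T$ inside $F^\pm$ is obtained by deleting one vertex, and it must have exactly eight edges, degree sequence $(4,3,3,2,2,2)$, and the local structure of $T$: its degree-four vertex $e$ has a unique nonneighbour $d$, which has degree two. I would use these invariants to prune the choices.
\begin{itemize}
\item \emph{$v_1,v_2$ should be nonadjacent in $F$.} If they were adjacent, $F^-$ would have eight edges. A connected seven-vertex graph loses at least one edge when any vertex is deleted, so no induced six-vertex subgraph would have eight edges.
\item \emph{The edge counts then fit.} With $v_1v_2$ a nonedge, $F^-$ has nine edges and $F^+$ has ten. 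We need a vertex of degree one to delete from $F^-$, and a vertex of degree two to delete from $F^+$.
\item \emph{Special pairs avoiding $e$ fail.} Deleting the leaf $e$ from $F^-$ leaves the house plus a vertex attached to $v_1,v_2$. For the pairs $\{b,d\}$ and $\{c,f\}$, the degree-four vertex is then nonadjacent to a vertex of degree three. This contradicts the structure of $T$.
\end{itemize}
So I expect the correct pair to involve the leaf $e$, or to require deleting a vertex other than $e$. I would run through the remaining nonadjacent pairs $\{e,x\}$ with $x\in\{a,b,c,f\}$, and the leftover pairs not containing $e$, checking degree sequences first and then verifying an explicit isomorphism onto the edge set~\eqref{eq:T-edge-set}.

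The main obstacle is this finite search: finding a single special pair that works for both $F^+$ and $F^-$ simultaneously, and writing down the two explicit isomorphisms with $T$. If no pair works for $H=F$, my fallback is a different six-vertex $T$-free graph $H$ for which $\{H,T\}$ can still be shown Erd\H{o}s--Hajnal. The candidate route is Theorem~\ref{thm:leaf-coleaf-closure} combined with Theorem~\ref{thm:P5-EH}, which requires $H$ to have a leaf whose deletion leaves a $P_5$ or a house. The same degree-sequence pruning would then be repeated for that $H$.
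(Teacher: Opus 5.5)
The gap is that you never exhibit $H$, and the search you defer has no solution. Your own pruning already settles $H=F$.

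\begin{itemize}
\item With $v_1v_2$ a nonedge, $F^-$ has nine edges, so a copy of $T$ in it must come from deleting a vertex of degree one.
\item If $e\in\{v_1,v_2\}$, then $e$ acquires degree two and $F^-$ has minimum degree two. So $F^-$ is $T$-free, which rules out the pairs containing $e$ that you expect to work.
\item If $e\notin\{v_1,v_2\}$, then $e$ is the only degree-one vertex. Deleting it leaves the house $F-e$ with a new vertex attached to one of its four nonadjacent pairs $\{a,c\},\{a,d\},\{b,d\},\{c,f\}$. The first two give degree sequence $(3,3,3,3,2,2)$, and you excluded the last two yourself.
\end{itemize}

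Together with your exclusion of adjacent pairs, $H=F$ fails for every choice of special vertices. Your fallback fails as well. If $H$ is $P_5$ plus a leaf, then $H^-$ has at most seven edges. If $H$ is the house plus a leaf, the argument above never used where the leaf is attached, so $H^-$ is again $T$-free. Thus the whole class you propose to search contains no valid $H$. There is also a minor slip: $H^+$ and $H^-$ differ only in the pair $v_1v_2$, so their edge counts differ by one, not two. Your conclusion there is unaffected.

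The missing idea is that condition (a) need not come from Proposition~\ref{prop:f124-leaf-pair} or from $P_5$ at all; it is enough that $H$ itself be Erd\H{o}s--Hajnal. There are two easy ways to get this.
\begin{itemize}
\item A six-vertex graph with a nontrivial module is Erd\H{o}s--Hajnal by Theorem~\ref{thm:APS-substitution} together with the five-vertex results.
\item Theorem~\ref{thm:leaf-coleaf-closure} makes $\{H,T\}$ Erd\H{o}s--Hajnal for every six-vertex $H$ with a leaf $v$, because $H-v$ has five vertices. Insisting on $H-v\in\{P_5,\overline{P_5}\}$ was an unnecessary restriction, and it is exactly what emptied your search space.
\end{itemize}
The paper takes $H$ on $\{0,\ldots,5\}$ with edge set $\{02,03,04,05,12,13,14,23\}$ and special vertices $2,5$. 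Here $\{2,3\}$ is a module, so $H$ is Erd\H{o}s--Hajnal by substitution. The paper then checks that $(a,b,c,d,e,f)\mapsto(0,1,5,4,2,6)$ is an induced copy of $T$ in $H^+$, and $(a,b,c,d,e,f)\mapsto(0,6,3,5,2,1)$ is one in $H^-$.
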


\begin{proof}
We verify Lemma~\ref{lem:HJZ-wonderful-criterion}.  Let $H$ be the
graph on $\{0,1,2,3,4,5\}$ with
\begin{equation}
 E(H)=\{02,03,04,05,12,13,14,23\},
 \label{eq:wonderful-H-edge-set}
\end{equation}
and take $v_1=2,v_2=5$ as its two special vertices.  The pair $\{2,3\}$
is a module: both vertices see $0,1$, miss $4,5$, and are adjacent to one
another.  Contracting this module gives a five-vertex graph, and $H$ is
obtained from that quotient by substituting $K_2$ for the contracted
vertex.  As recalled in Section~\ref{sec:introduction}, every graph on
at most five vertices has the Erd\H{o}s--Hajnal property
\cite{ChudnovskySafra2008,ChudnovskyScottSeymourSpirkl2023,
NguyenScottSeymourP5}; hence Theorem~\ref{thm:APS-substitution}
shows that $H$ has the
Erd\H{o}s--Hajnal property.  Consequently the family $\{H,T\}$ has that
property as well, since every $\{H,T\}$-free graph is $H$-free.

Let $z=6$ be the new vertex in the definitions of $H^+$ and $H^-$.  Since
$25\notin E(H)$, the graph $H^+$ adds the three edges $26,56,25$, whereas
$H^-$ adds $26,56$ and leaves $25$ absent.

In $H^+$, the map
\begin{equation*}
 (a,b,c,d,e,f)\longmapsto(0,1,5,4,2,6)
\end{equation*}
is an induced copy of $T$.  Indeed, the eight edges in
\eqref{eq:T-edge-set} become
\[
 05,04,02,14,12,25,56,26,
\]
all present, while the seven pairs in \eqref{eq:T-nonedge-set} become
\[
 01,06,15,16,45,24,46,
\]
all absent.

In $H^-$, the map
\begin{equation*}
 (a,b,c,d,e,f)\longmapsto(0,6,3,5,2,1)
\end{equation*}
is an induced copy of $T$.  The eight edges in
\eqref{eq:T-edge-set} become
\[
 03,05,02,65,62,32,31,21,
\]
and the seven nonedges in \eqref{eq:T-nonedge-set} become
\[
 06,01,63,61,35,52,51.
\]
Their stated signs follow directly from \eqref{eq:wonderful-H-edge-set}
and the definition of $H^-$.
Thus neither $H^+$ nor $H^-$ is $T$-free, and
Lemma~\ref{lem:HJZ-wonderful-criterion} applies.
\end{proof}

It remains to verify the comb conclusion required by the
iterative-sparsification framework of~\cite{HuangJuZhou2026}.

\subsection{Rectangle lemmas and affine interfaces}

Throughout the following statements, the host graph is $T$-free and
the comb is equipped with the vertex $r$ fixed above.  Each
contradiction is certified by an explicit induced copy of $T$, with all
eight edges and seven nonedges listed.

\begin{lemma}[One-edge rectangle lemma]
\label{lem:f124-one-edge-rectangle}
Let $x,x'\in B_i$ be nonadjacent.  Let $y,z\notin B_i$ be adjacent tooth
vertices (they may belong to the same tooth or to two different teeth).
Then the four pairs between $\{x,x'\}$ and $\{y,z\}$ cannot contain
exactly one edge.
\end{lemma}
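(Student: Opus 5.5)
The plan is to argue by contradiction: assuming exactly one edge between $\{x,x'\}$ and $\{y,z\}$, I will exhibit an induced copy of $T$ on six vertices built from $x,x',y,z$, the handle $h_i$ and the equipping vertex $r$. First I reduce to one case by symmetry. The hypotheses are symmetric under swapping $x\leftrightarrow x'$ (both lie in $B_i$ and are nonadjacent) and under swapping $y\leftrightarrow z$ (both are tooth vertices outside $B_i$ and are adjacent). So I may assume the unique edge is $xy$, while $xz$, $x'y$, $x'z$ are nonedges.

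Next I record every adjacency among the six vertices $h_i,r,x,x',y,z$.
\begin{itemize}
\item Since $x,x'\in B_i$, the handle $h_i$ is adjacent to both $x$ and $x'$.
\item Since $y$ and $z$ are tooth vertices not in $B_i$, each lies in some $B_j$ with $j\neq i$, and so $h_i$ is anticomplete to $\{y,z\}$.
\item By \eqref{eq:root-outside}, $r$ is adjacent to all four tooth vertices $x,x',y,z$ and nonadjacent to $h_i$.
\item Together with $xx'\notin E$, $yz\in E$ and the case assumption, every pair among the six vertices is now determined.
\end{itemize}
The edges are $h_ix,\ h_ix',\ rx,\ rx',\ ry,\ rz,\ yz,\ xy$, which is eight edges. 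The nonedges are $h_ir,\ h_iy,\ h_iz,\ xx',\ xz,\ x'y,\ x'z$, which is seven nonedges. All six vertices are distinct: $r$ and $h_i$ lie outside all teeth, $x,x'\in B_i$, $y,z\notin B_i$, and $x\neq x'$, $y\neq z$.

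Finally I match this graph to $T$ using the degree profile noted after \eqref{eq:T-nonedge-set}.
\begin{itemize}
\item The unique degree-four vertex $e$ must be $r$, and its unique nonneighbour $d$ must be $h_i$.
\item The neighbours $a,b$ of $d$ go to $x,x'$. The degree-three vertex $a$ goes to $x$, since $x$ has the three neighbours $h_i,r,y$, and $b$ goes to $x'$.
\item Then $c$ goes to $y$ and $f$ to $z$.
\end{itemize}
So the candidate map is $(a,b,c,d,e,f)\mapsto(x,x',y,h_i,r,z)$. To verify it, I check that the eight edges in \eqref{eq:T-edge-set} become $xy,\ xh_i,\ xr,\ x'h_i,\ x'r,\ yr,\ yz,\ rz$, all present. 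I also check that the seven pairs in \eqref{eq:T-nonedge-set} become $xx',\ xz,\ x'y,\ x'z,\ yh_i,\ h_ir,\ h_iz$, all absent.

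This gives an induced $T$, contradicting $T$-freeness. No step is really an obstacle; the only care needed is the symmetry reduction and the observation that $h_i$ is anticomplete to $y,z$ even when they lie in the same tooth.
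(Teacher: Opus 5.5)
Your proof is correct and is the paper's argument. You use the same symmetry reduction to the case where $xy$ is the unique edge, and the same map $(a,b,c,d,e,f)\mapsto(x,x',y,h_i,r,z)$ with identical edge and nonedge lists; the degree-profile derivation and the distinctness check are harmless additions that the paper leaves implicit.
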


\begin{proof}
After interchanging $x,x'$ and $y,z$, suppose that $xy$ is the unique
edge.  Then
\begin{equation*}
 (a,b,c,d,e,f)\longmapsto(x,x',y,h_i,r,z)
\end{equation*}
is an induced copy of $T$.  The eight required edges are
\[
 xy,xh_i,xr,x'h_i,x'r,yr,yz,rz,
\]
and the seven required nonedges are
\[
 xx',xz,x'y,x'z,yh_i,h_ir,zh_i.
\]
Here $h_ir$ is absent by \eqref{eq:root-outside}, while $yh_i$ and
$zh_i$ are absent because $y,z$ lie in teeth other than $B_i$.
\end{proof}

\begin{lemma}[Three-edge rectangle lemma]
\label{lem:f124-three-edge-rectangle}
Suppose that $h_ih_j$ is a nonedge.  If $x,x'\in B_i$ are nonadjacent
and $y,y'\in B_j$ are adjacent, then the four pairs between
$\{x,x'\}$ and $\{y,y'\}$ cannot contain exactly three edges.
Consequently that rectangle has an even number of edges.
\end{lemma}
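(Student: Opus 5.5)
The plan is to exhibit an explicit induced copy of $T$, in the same way as the one-edge rectangle lemma (Lemma~\ref{lem:f124-one-edge-rectangle}). This time the vertex $r$ is not needed; the copy uses the two handles instead. By symmetry, interchanging $x,x'$ and interchanging $y,y'$, I may assume that the unique missing pair is $x'y'$. So $xy,xy',x'y$ are edges and $x'y'$ is a nonedge.

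The adjacencies among the six vertices $x,x',y,y',h_i,h_j$ are then all determined:
\begin{itemize}
\item $xx'$ is a nonedge and $yy'$ is an edge, by hypothesis;
\item $h_i$ is complete to $\{x,x'\}$ and anticomplete to $\{y,y'\}$, and $h_j$ is the reverse, by the comb definition;
\item $h_ih_j$ is a nonedge, by hypothesis.
\end{itemize}
The resulting degrees are $3,2,4,3,2,2$ for $x,x',y,y',h_i,h_j$. This matches the degree sequence of $T$.

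The unique degree-four vertex $y$ plays $e$, and its unique nonneighbour $h_i$ plays $d$. This forces the map
\[
 (a,b,c,d,e,f)\longmapsto(x,x',y',h_i,y,h_j).
\]
The eight edges of \eqref{eq:T-edge-set} become
\[
 xy',\ xh_i,\ xy,\ x'h_i,\ x'y,\ y'y,\ y'h_j,\ yh_j,
\]
all present. The seven nonedges of \eqref{eq:T-nonedge-set} become
\[
 xx',\ xh_j,\ x'y',\ x'h_j,\ y'h_i,\ h_iy,\ h_ih_j,
\]
all absent. Each listed sign follows from the hypotheses or the comb definition. This is an induced copy of $T$, contradicting $T$-freeness.

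For the parity consequence, note that three edges are now excluded. Exactly one edge is excluded by Lemma~\ref{lem:f124-one-edge-rectangle}, which applies because $y,y'\notin B_i$ are adjacent tooth vertices. Only $0$, $2$ or $4$ edges remain possible.

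The only real work is finding the right six vertices. The degree-sequence match makes the assignment essentially forced, so I do not expect any step to be a serious obstacle.
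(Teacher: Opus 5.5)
Your proof is correct and is the paper's argument: you normalise so that $x'y'$ is the unique nonedge, use the same map $(a,b,c,d,e,f)\mapsto(x,x',y',h_i,y,h_j)$ with the same edge and nonedge lists, and obtain parity by combining this with Lemma~\ref{lem:f124-one-edge-rectangle}. Your degree-sequence remark is a good addition that the paper omits; it shows the copy of $T$ is uniquely determined, though the proof does not need it.
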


\begin{proof}
Lemma~\ref{lem:f124-one-edge-rectangle} excludes one edge.  For three
edges, relabel so that $x'y'$ is the unique nonedge.  The map
\begin{equation*}
 (a,b,c,d,e,f)\longmapsto(x,x',y',h_i,y,h_j)
\end{equation*}
sends \eqref{eq:T-edge-set} to
\[
 xy',xh_i,xy,x'h_i,x'y,y'y,y'h_j,yh_j,
\]
and sends \eqref{eq:T-nonedge-set} to
\[
 xx',xh_j,x'y',x'h_j,y'h_i,h_iy,h_ih_j.
\]
Thus it gives an induced $T$, a contradiction.
\end{proof}

Here is the exact linear-algebra consequence.  We identify adjacency
values with elements of $\mathbb F_2$.  For fixed ordered rows $x,x'$,
the column of a vertex $y$ is
$(1_{xy\in E(G)},1_{x'y\in E(G)})^{\mathsf T}$.  We write $00$ for the
zero column and call $01$ and $10$ singleton columns.  A nonconstant
binary function partitions its domain into its two nonempty fibres.

\begin{lemma}[Affine representation of an interface]
\label{lem:f124-affine-cut}
Suppose $h_ih_j$ is a nonedge, $A\subseteq B_i$ is anticonnected, and
$C\subseteq B_j$ is connected.  There are functions
\[
 \lambda_A:A\longrightarrow\mathbb F_2,
 \qquad
 \lambda_C:C\longrightarrow\mathbb F_2
\]
and a sign $\gamma\in\mathbb F_2$ such that
\begin{equation}
  1_{ac\in E(G)}=\lambda_A(a)\oplus\lambda_C(c)\oplus\gamma
       \qquad(a\in A,c\in C).
 \label{eq:f124-affine-representation}
\end{equation}
In particular, some subset of at least $|A|/2$ is pure to some subset of
at least $|C|/2$.
\end{lemma}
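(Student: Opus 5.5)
The plan is to show that the $\mathbb F_2$ "rectangle sum"
\[
M(a,c)\oplus M(a',c)\oplus M(a,c')\oplus M(a',c')
\]
vanishes for all $a,a'\in A$ and $c,c'\in C$. A binary matrix all of whose $2\times2$ rectangles have even sum has rank at most one over $\mathbb F_2$ in the affine sense. Concretely, fix $a_0\in A$ and $c_0\in C$ and set
\[
\lambda_A(a)=M(a,c_0)\oplus M(a_0,c_0),\qquad
\lambda_C(c)=M(a_0,c),\qquad
\gamma=0 .
\]
Then \eqref{eq:f124-affine-representation} is exactly the vanishing of the rectangle sum on $\{a,a_0\}\times\{c,c_0\}$.

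First I establish the rectangle condition when $aa'$ is a nonedge of $G[A]$ and $cc'$ is an edge of $G[C]$. This is immediate from Lemma~\ref{lem:f124-three-edge-rectangle}, which applies because $h_ih_j$ is a nonedge: that rectangle has an even number of edges.

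Next I propagate the condition along paths. Rectangle sums are additive: for $a,a',a''$,
\[
S(a,a'';c,c')=S(a,a';c,c')\oplus S(a',a'';c,c'),
\]
and likewise in the second coordinate. Since $C$ is connected, any $c,c'\in C$ are joined by a path in $G[C]$, so $S(a,a';c,c')=0$ whenever $aa'$ is a nonedge, by summing over the edges of the path. Since $A$ is anticonnected, any $a,a'\in A$ are joined by a path in $\overline{G[A]}$, and summing again gives $S(a,a';c,c')=0$ for all pairs. The degenerate cases $|A|=1$ or $|C|=1$ are trivial, since then every rectangle sum is automatically zero. This completes the affine representation.

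For the "in particular" clause, use the representation directly. Choose a fibre $A'=\lambda_A^{-1}(\alpha)$ with $|A'|\ge|A|/2$ and a fibre $C'=\lambda_C^{-1}(\beta)$ with $|C'|\ge|C|/2$. On $A'\times C'$ the adjacency value is the constant $\alpha\oplus\beta\oplus\gamma$, so $A'$ is pure to $C'$.

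I do not expect a real obstacle. The only point needing care is making sure the three-edge lemma's hypotheses match: the nonadjacent pair must lie in $B_i$ and the adjacent pair in $B_j$. This is exactly why $A$ is required to be anticonnected and $C$ connected, rather than the other way round.
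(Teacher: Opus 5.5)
Your proposal is correct and follows the paper's proof essentially verbatim. Both arguments get even parity for the elementary rectangles (a nonedge of $A$ against an edge of $C$) from the rectangle lemmas, propagate it along a path in $\overline{G[A]}$ and a path in $G[C]$, and then fix $a_0,c_0$ and take largest fibres; the only cosmetic difference is that you absorb $M(a_0,c_0)$ into $\lambda_A$ and set $\gamma=0$, whereas the paper keeps $\gamma=M(a_0,c_0)$.
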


\begin{proof}
By Lemmas~\ref{lem:f124-one-edge-rectangle} and
\ref{lem:f124-three-edge-rectangle}, every rectangle whose two rows are
a nonedge of $A$ and whose two columns are an edge of $C$ has even parity.
Join arbitrary $a,a'\in A$ by a path in $\overline{G[A]}$, and arbitrary
$c,c'\in C$ by a path in $G[C]$.  Summing the elementary rectangle
equalities over the Cartesian product of the edge sets of the two paths
in $\mathbb F_2$ cancels every internal term and gives
\[
 M(a,c)\oplus M(a,c')\oplus M(a',c)\oplus M(a',c')=0,
\]
where $M$ is the $A$--$C$ adjacency matrix.  Fix $a_0\in A,c_0\in C$
and take
\[
 \lambda_A(a)=M(a,c_0),\quad
 \lambda_C(c)=M(a_0,c),\quad
 \gamma=M(a_0,c_0).
\]
The displayed rectangle identity gives
\eqref{eq:f124-affine-representation}.  Taking a largest fibre of
each of the two binary functions gives the final pure pair.
\end{proof}

We next prove the synchronisation property for genuinely two-sided
mixed interfaces.  This is the step that avoids an exponential loss.

\begin{lemma}[Synchronisation of two-sided mixed interfaces]
\label{lem:f124-triple-sync}
Let $i,j,k$ have pairwise nonadjacent handles.  For $q\in\{i,j,k\}$,
let $X_q\subseteq B_q$ be anticonnected.  Suppose every
pairwise interface has a representation
\begin{equation}
 M_{pq}(x,y)=\lambda_p^q(x)\oplus\lambda_q^p(y)
                         \oplus\gamma_{pq},
 \label{eq:f124-triple-affine}
\end{equation}
and every one of the six displayed endpoint functions is nonconstant.
Then, at each tooth, its two partner functions agree up to complementation;
for example
\[
                         \lambda_i^j=\lambda_i^k\oplus\epsilon_i
\]
for a constant $\epsilon_i\in\mathbb F_2$.
\end{lemma}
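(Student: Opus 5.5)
The plan is to argue by contradiction: if the two partner functions at a tooth define different cuts, I will locate a rectangle with exactly one edge, which Lemma~\ref{lem:f124-one-edge-rectangle} forbids. By symmetry it suffices to treat the tooth $i$. Put $\mu=\lambda_i^j\oplus\lambda_i^k$ on $X_i$. The claim $\lambda_i^j=\lambda_i^k\oplus\epsilon_i$ says exactly that $\mu$ is constant, so I suppose instead that $\mu$ is nonconstant.

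\emph{Step 1 (a nonadjacent pair separated by exactly one partner).} The fibres of $\mu$ are two nonempty parts of $X_i$. Since $X_i$ is anticonnected, $\overline{G[X_i]}$ is connected, so some edge of $\overline{G[X_i]}$ crosses these two fibres. This gives nonadjacent $x,x'\in X_i$ with $\mu(x)\neq\mu(x')$. Hence exactly one of $\lambda_i^j,\lambda_i^k$ takes different values at $x$ and $x'$. By the symmetry $j\leftrightarrow k$, I assume $\lambda_i^j(x)=\lambda_i^j(x')$ and $\lambda_i^k(x)\neq\lambda_i^k(x')$. By \eqref{eq:f124-triple-affine}:
\begin{itemize}[label=--]
\item every $y\in X_j$ satisfies $M_{ij}(x,y)=M_{ij}(x',y)$;
\item every $z\in X_k$ satisfies $M_{ik}(x,z)\neq M_{ik}(x',z)$.
\end{itemize}

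\emph{Step 2 (choosing the columns).} Because $\lambda_j^i$ is nonconstant, the function $y\mapsto M_{ij}(x,y)=\lambda_i^j(x)\oplus\lambda_j^i(y)\oplus\gamma_{ij}$ is nonconstant on $X_j$. So I can choose $y\in X_j$ with $xy$ and $x'y$ both nonedges. Next, because $\lambda_k^j$ is nonconstant, the function $z\mapsto M_{jk}(y,z)=\lambda_j^k(y)\oplus\lambda_k^j(z)\oplus\gamma_{jk}$ is nonconstant on $X_k$. So I can choose $z\in X_k$ adjacent to $y$. This step is where the hypothesis that all six endpoint functions are nonconstant is really used, namely through $\lambda_j^i$ and $\lambda_k^j$.

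\emph{Step 3 (the contradiction).} Consider the four pairs between $\{x,x'\}$ and $\{y,z\}$. The pairs $xy$ and $x'y$ are nonedges by the choice of $y$. By Step~1, exactly one of $xz$ and $x'z$ is an edge. So the rectangle contains exactly one edge. Moreover $x,x'\in B_i$ are nonadjacent, and $y,z$ are adjacent tooth vertices outside $B_i$, since they lie in $B_j$ and $B_k$. This is precisely the configuration excluded by Lemma~\ref{lem:f124-one-edge-rectangle}, so $\mu$ is constant.

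Running the same argument at the teeth $j$ and $k$ gives the constants $\epsilon_j$ and $\epsilon_k$. I expect the only delicate point to be Step~1: one must use anticonnectedness, rather than arbitrary $x,x'$, so that the two rows are nonadjacent as the rectangle lemma requires. The pairwise nonadjacency of the handles enters only through the hypotheses, since it is what makes the affine representations \eqref{eq:f124-triple-affine} available.
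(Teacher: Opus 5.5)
Your proof is correct and follows essentially the paper's argument. You take a nonedge $xx'$ of $X_i$ separated by exactly one of the two partner functions, pick a vertex with column $00$ from the tooth whose function does not separate $x,x'$, and pick a neighbour of it in the other tooth; this gives a rectangle with exactly one edge, contradicting Lemma~\ref{lem:f124-one-edge-rectangle}. The differences are cosmetic: you phrase the conclusion as $\lambda_i^j\oplus\lambda_i^k$ being constant rather than as equal differences along every edge of $\overline{G[X_i]}$, and your roles of $j$ and $k$ are the mirror image of the paper's.
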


\begin{proof}
We first prove that the two functions have the same difference on every
nonedge $xx'$ of $X_i$.  Suppose, to the contrary, that
\begin{equation}
 \lambda_i^j(x)\ne\lambda_i^j(x'),
 \qquad
 \lambda_i^k(x)=\lambda_i^k(x').
 \label{eq:f124-cut-discrepancy}
\end{equation}
For every $y\in X_j$, \eqref{eq:f124-triple-affine} and the first
relation in \eqref{eq:f124-cut-discrepancy} say that exactly one of
$xy,x'y$ is an edge.  Since $\lambda_k^i$ is nonconstant,
\eqref{eq:f124-triple-affine} and the second relation in
\eqref{eq:f124-cut-discrepancy} give a vertex $z\in X_k$ which is nonadjacent to both
$x,x'$ (one of the two fibres gives column $00$).  Finally, both endpoint
functions on the $X_j$--$X_k$ interface are nonconstant.  Consequently
every vertex of $X_k$ has a neighbour in $X_j$: its neighbourhood is one of
the two nonempty fibres of $\lambda_j^k$.  Choose such a neighbour $y$ of
$z$.

Now $yz$ is an edge, while the rectangle from $\{x,x'\}$ to $\{y,z\}$
has exactly one edge.  This contradicts
Lemma~\ref{lem:f124-one-edge-rectangle}.
Interchanging $j,k$ proves the reverse implication.  Thus the two
functions have equal differences across every edge of
$\overline{G[X_i]}$.  That graph is connected, so their pointwise sum is
constant.  The same argument at $X_j$ and $X_k$ completes the proof.
\end{proof}

\begin{corollary}[Synchronisation on a clique of two-sided interfaces]
\label{cor:f124-full-cut-extraction}
Let $I$ be a set of pairwise nonadjacent handles.  Suppose that every
$i\in I$ has an anticonnected set $X_i\subseteq B_i$, and
every interface $X_i$--$X_j$ has the form
\eqref{eq:f124-triple-affine} with both endpoint functions
nonconstant.  Assume $|I|\geq2$.  Then there are sets $X'_i\subseteq X_i$ with
$|X'_i|\geq |X_i|/2$ such that $(X'_i:i\in I)$ is a pure blockade.
\end{corollary}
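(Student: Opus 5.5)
The plan is to use Lemma~\ref{lem:f124-triple-sync} to synchronise all the endpoint functions at each tooth, and then choose one fibre per tooth. If $|I|=2$, the statement is just the last sentence of Lemma~\ref{lem:f124-affine-cut}: take a largest fibre of $\lambda_i^j$ and a largest fibre of $\lambda_j^i$. So assume $|I|\geq3$.

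First, fix $i\in I$ and a reference partner $j_0\in I\setminus\{i\}$, and put $\mu_i=\lambda_i^{j_0}$. For any other $k\in I\setminus\{i,j_0\}$, the three indices $i,j_0,k$ have pairwise nonadjacent handles, and all six endpoint functions are nonconstant by hypothesis. Lemma~\ref{lem:f124-triple-sync} then gives $\lambda_i^k=\mu_i\oplus\epsilon_i^k$ for a constant $\epsilon_i^k$. Thus every endpoint function at tooth $i$ equals $\mu_i$ up to an additive constant, and $\mu_i$ is nonconstant.

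Second, substitute these expressions into \eqref{eq:f124-triple-affine}. For distinct $i,j\in I$,
\[
M_{ij}(x,y)=\mu_i(x)\oplus\mu_j(y)\oplus\delta_{ij}
\]
for some constant $\delta_{ij}\in\mathbb F_2$, obtained by absorbing the $\epsilon$'s into $\gamma_{ij}$. Choose $\beta_i\in\mathbb F_2$ so that the fibre $X'_i=\mu_i^{-1}(\beta_i)$ has $|X'_i|\geq|X_i|/2$; such a choice exists because the two fibres partition $X_i$. On $X'_i\times X'_j$ the adjacency is the constant $\beta_i\oplus\beta_j\oplus\delta_{ij}$, so each pair is complete or anticomplete. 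Hence $(X'_i:i\in I)$ is a pure blockade.

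The main obstacle is the consistency in the first step: the constant in $\lambda_i^k=\lambda_i^{j}\oplus\epsilon$ depends on the pair, and each fibre must be chosen once per tooth, independently of the partner. Routing every comparison through the fixed reference $j_0$ avoids any circularity. The only point needing care is that the synchronisation lemma applies to every triple $\{i,j_0,k\}$; this holds because its hypotheses are pairwise conditions, which we have for all pairs in $I$. Nonconstancy of $\mu_i$ is not needed for the fibre choice itself, since an empty fibre is never the larger one.
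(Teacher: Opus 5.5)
Your proposal is correct and follows the paper's proof. Like the paper, you use Lemma~\ref{lem:f124-triple-sync} to make all endpoint functions at each tooth agree, up to complementation, with the function toward one reference partner; you choose the reference per tooth, whereas the paper uses a global $i_0$ and treats $X_{i_0}$ separately. You then take a largest fibre per tooth, on which every interface is constant. For $|I|=2$, citing Lemma~\ref{lem:f124-affine-cut} is slightly off, since its connectedness hypothesis on the second set is not given here. Your direct largest-fibre argument from the assumed affine form, and indeed your general argument, already covers that case.
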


\begin{proof}
Fix $i_0\in I$.  By Lemma~\ref{lem:f124-triple-sync}, for each $i\ne i_0$
all functions $\lambda_i^j$, $j\ne i$, agree up to complementation with
$\lambda_i^{i_0}$.  The same lemma applied to $(i_0,j,k)$ for any two
partners $j,k$ also synchronises all the functions $\lambda_{i_0}^j$.
(When $|I|=2$ this is immediate by choosing the one available function.)
Let $X'_i$ be a largest fibre of this common
bipartition.  Formula \eqref{eq:f124-triple-affine} is constant on
$X'_i\times X'_j$ for every
pair $i,j$, proving purity and the size bound.
\end{proof}

There is a second conclusion once the synchronised label classes have been
chosen.  It uses the Erd\H{o}s--Hajnal property of the pair $\{F,T\}$.

\begin{lemma}[A configuration excluding $F$ from the pattern]
\label{lem:f124-antitwin-quotient}
Let $A,B,C,D,E,F_0$ be vertices which induce $F$, in the displayed
roles; thus their edges are
\[
 AB,BC,CD,DE,AF_0,BF_0,DF_0.
\]
If a further vertex $x$ is nonadjacent to $B,F_0$ and adjacent to
$C,D,E$, then the graph
contains an induced $T$.  No assumption on $xA$ is needed.
\end{lemma}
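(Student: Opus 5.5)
The plan is to exhibit an induced copy of $T$ that avoids the vertex $A$ altogether. Then the unknown pair $xA$ never plays a role, which explains the final remark in the statement. The natural candidate is the six-set $\{B,C,D,E,F_0,x\}$.

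First I determine all adjacencies inside this set. From the edge list of $F$, the edges among $B,C,D,E,F_0$ are exactly $BC,CD,DE,BF_0,DF_0$. From the hypotheses on $x$, the pairs $xC,xD,xE$ are edges and $xB,xF_0$ are nonedges. So the set spans exactly eight edges and therefore exactly seven nonedges, which matches \eqref{eq:T-edge-set} and \eqref{eq:T-nonedge-set} in number.

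Next I guide the identification by degrees. Inside the set the degrees are
\[
 D:4,\quad C:3,\quad x:3,\quad B:2,\quad E:2,\quad F_0:2 .
\]
This agrees with the degree sequence $3,2,3,2,4,2$ of $T$. The vertex $e$ of $T$ has degree four, so it must go to $D$. The unique nonneighbour $d$ of $e$ must then go to $B$, the unique nonneighbour of $D$ in the set. Since $N_T(d)=\{a,b\}$ and $N(B)=\{C,F_0\}$, the vertex $a$ (degree three) goes to $C$ and $b$ goes to $F_0$. The remaining vertex $c$ (degree three) goes to $x$, and $f$ goes to $E$. This proposes
\[
 (a,b,c,d,e,f)\longmapsto(C,F_0,x,B,D,E).
\]

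Finally I verify the eight edges of \eqref{eq:T-edge-set}. Under this map they become
\[
 Cx,\ CB,\ CD,\ F_0B,\ F_0D,\ xD,\ xE,\ DE,
\]
and each of these is an edge by the list above. Both vertex sets have six elements and the set spans exactly eight edges, so the seven nonedges of \eqref{eq:T-nonedge-set} are automatically sent to the seven nonedges $CF_0,\ F_0x,\ xB,\ BD,\ BE,\ F_0E,\ CE$. For a cross-check, these are the $F$-nonedges among $B,\dots,F_0$ together with $xB,xF_0$. Hence the map is an induced copy of $T$.

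There is no real obstacle here. The only care needed is the bookkeeping of which pairs are forced, and the observation that $A$ can be discarded. Discarding $A$ is exactly what makes the assumption on $xA$ unnecessary.
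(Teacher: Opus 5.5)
Your proof is correct and matches the paper's: both use the six vertices $B,C,D,E,F_0,x$ and the map $(a,b,c,d,e,f)\mapsto(C,F_0,x,B,D,E)$. The only difference is presentation. The paper lists all seven nonedges explicitly, whereas you obtain them by noting that the eight edges of $T$ land on exactly the eight edges spanned by the set. That shortcut is valid.
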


\begin{proof}
The six vertices $B,C,D,E,F_0,x$ induce $T$ under
\begin{equation*}
 (a,b,c,d,e,f)\longmapsto(C,F_0,x,B,D,E).
\end{equation*}
Indeed, the eight pairs in \eqref{eq:T-edge-set} become
\[
 Cx,CB,CD,F_0B,F_0D,xD,xE,DE,
\]
all edges, and the seven pairs in \eqref{eq:T-nonedge-set} become
\[
 CF_0,CE,F_0x,F_0E,xB,BD,BE,
\]
all nonedges.
\end{proof}

\begin{corollary}[The synchronised pattern is $\{F,T\}$-free]
\label{cor:f124-sync-quotient-pairfree}
Under the hypotheses of Corollary~\ref{cor:f124-full-cut-extraction}, choose
each $X'_i$ to be one fibre of the synchronised nonconstant label on
$X_i$, and let $P$ be the pure-blockade pattern.  Then $P$ is
$\{F,T\}$-free.  Consequently $P$ has a clique or stable set of order at
least $|I|^\eta$ for a fixed $\eta>0$.
\end{corollary}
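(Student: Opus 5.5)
The plan is to show that $P$ is $T$-free and $F$-free separately, by lifting any forbidden pattern back to an induced subgraph of $G$. The final sentence of the statement then follows from Proposition~\ref{prop:f124-leaf-pair}: take $\eta$ to be an Erd\H{o}s--Hajnal exponent for $\{F,T\}$, so that $\operatorname{hom}(P)\geq |P|^\eta=|I|^\eta$.

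\emph{$T$-freeness.} Since $(X'_i:i\in I)$ is a pure blockade, choosing one vertex from each of any six blocks induces in $G$ exactly the pattern restricted to those six indices. An induced $T$ in $P$ would therefore lift to an induced $T$ in $G$. This contradicts the standing assumption that $G$ is $T$-free.

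\emph{$F$-freeness.} Here I would use the complementary fibre, together with Lemma~\ref{lem:f124-antitwin-quotient}. Let $\sigma_i$ denote the synchronised nonconstant label on $X_i$ from Corollary~\ref{cor:f124-full-cut-extraction}. By Lemma~\ref{lem:f124-triple-sync}, every partner function satisfies $\lambda_i^j=\sigma_i\oplus\epsilon_{ij}$ for a constant $\epsilon_{ij}$. Hence \eqref{eq:f124-triple-affine} reads
\[
 M_{ij}(x,y)=\sigma_i(x)\oplus\sigma_j(y)\oplus\text{const}.
\]
Consequently, if $x\in X_i\setminus X'_i$, so that $x$ lies in the other fibre of $\sigma_i$, then for every $j\neq i$ the vertex $x$ is pure to $X'_j$ with the \emph{opposite} adjacency to the one that $X'_i$ has to $X'_j$. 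This other fibre is nonempty because $\sigma_i$ is nonconstant.

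Now suppose $P$ contains an induced $F$ on indices $i_A,i_B,i_C,i_D,i_E,i_{F_0}$ in the roles of Lemma~\ref{lem:f124-antitwin-quotient}. Choose representatives $A,B,C,D,E,F_0$ in the corresponding sets $X'$; by purity they induce $F$ in $G$. Pick $x\in X_{i_A}\setminus X'_{i_A}$. In the pattern, $i_A$ is adjacent to $i_B$ and $i_{F_0}$ and nonadjacent to $i_C,i_D,i_E$. By the flipping property, $x$ is therefore nonadjacent to $B$ and $F_0$ and adjacent to $C,D,E$. Its adjacency to $A$ is irrelevant. Lemma~\ref{lem:f124-antitwin-quotient} then yields an induced $T$ in $G$, a contradiction.

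The main obstacle is the flipping property: the fact that a single global label $\sigma_i$ controls all interfaces at tooth $i$ simultaneously. This is exactly what the synchronisation lemma provides. The remaining work is bookkeeping of the constants $\epsilon_{ij}$ and $\gamma_{ij}$. I would also check the degenerate case $|I|\leq 5$: there $P$ cannot contain $F$ or $T$, and the bound $\operatorname{hom}(P)\geq |I|^\eta$ is trivial for $\eta\leq 1$.
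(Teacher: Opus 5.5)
Your proposal is correct and follows the paper's proof essentially step for step. You lift representatives to get $T$-freeness, then flip the synchronised label on the $A$-tooth and apply Lemma~\ref{lem:f124-antitwin-quotient} to get $F$-freeness, with $\eta$ supplied by Proposition~\ref{prop:f124-leaf-pair}. One small slip: for $2\le|I|\le5$ the bound $\operatorname{hom}(P)\ge|I|^\eta$ is not automatic for every $\eta\le1$ (take $|I|=5$, $\eta=1$), but no separate check is needed, because the Erd\H{o}s--Hajnal exponent of $\{F,T\}$ already applies to every $\{F,T\}$-free graph, including the small ones.
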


\begin{proof}
The pattern is $T$-free, since representatives of its blocks would give
the same induced graph in the $T$-free host.  Suppose it contains an
induced $F$, and choose representatives
$A,B,C,D,E,F_0$ from the six corresponding blocks.  In the tooth
containing $A$, the other label fibre is nonempty; choose $x$ there.
Changing only that synchronised endpoint label flips its adjacency to
each of the other five representatives.  Thus $x$ is nonadjacent to
$B,F_0$ and adjacent to $C,D,E$, contrary to
Lemma~\ref{lem:f124-antitwin-quotient}.  Hence $P$ is also $F$-free.
Proposition~\ref{prop:f124-leaf-pair} supplies the exponent $\eta$.
\end{proof}

The remaining case, in which some mixed interfaces are one-sided, is
treated in Section~\ref{sec:all-mixed}.

\section{Pairwise mixed additively separable interfaces}
\label{sec:all-mixed}

Let $T=\overline F$.  Work in a $T$-free graph containing a comb
equipped with a vertex $r$ as in Section~\ref{sec:preliminaries}.
Let $I$ be a set of pairwise nonadjacent handles, and let every
$X_i\subseteq B_i$ be anticonnected.  Suppose that each
pairwise interface is mixed and has the affine form
\begin{equation}
 M_{ij}(x,y)=\lambda_i^j(x)\oplus\lambda_j^i(y)
                         \oplus\gamma_{ij}.
 \label{eq:all-mixed-affine}
\end{equation}
Since the interface is mixed, at least one of its two endpoint functions
is nonconstant.  Within this section, call $ij$ \emph{two-sided} if both endpoint
functions are nonconstant, and \emph{one-sided} otherwise.  In the one-sided case orient the pair
from its nonconstant endpoint to its constant endpoint.
This classification does not depend on the chosen affine
representation.  Indeed, if
$f(x)\oplus g(y)\oplus c=f'(x)\oplus g'(y)\oplus c'$ for every
$(x,y)$, then fixing one endpoint shows that $f\oplus f'$ and
$g\oplus g'$ are constant.  Thus $f$ is nonconstant exactly when $f'$
is, and similarly for $g$ and $g'$.

We use Lemma~\ref{lem:f124-one-edge-rectangle}: if
$x,x'\in B_i$ are nonadjacent and adjacent tooth vertices $y,z$ lie
outside $B_i$, then the rectangle
$\{x,x'\}\times\{y,z\}$ cannot have exactly one edge.  Its labelled
$T$ witness is
\begin{equation}
 (a,b,c,d,e,f)\longmapsto(x,x',y,h_i,r,z).
 \label{eq:all-mixed-one-edge-witness}
\end{equation}

\begin{lemma}[No two-sided--two-sided--one-sided triangle]
\label{lem:f124-type-no-p3}
There are no three indices $i,j,k$ for which $ij,jk$ are two-sided and $ik$
is one-sided.
\end{lemma}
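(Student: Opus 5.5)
The plan is to derive a contradiction from Lemma~\ref{lem:f124-one-edge-rectangle} applied at the tooth of the one-sided pair whose endpoint function is constant. This mirrors the discrepancy argument in the proof of Lemma~\ref{lem:f124-triple-sync}. The statement is symmetric under $i\leftrightarrow k$. So after relabelling I will assume that on the one-sided interface $ik$ the function $\lambda_k^i$ is constant. Since the interface is mixed, $\lambda_i^k$ is then nonconstant. Hence by \eqref{eq:all-mixed-affine} every $x\in X_i$ is either complete or anticomplete to $X_k$, and both kinds of $x$ occur.

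First I will find a suitable nonedge in $X_k$. The interface $jk$ is two-sided, so $\lambda_k^j$ is nonconstant on $X_k$. Since $\overline{G[X_k]}$ is connected, some edge $zz'$ of it crosses the two fibres of $\lambda_k^j$. Thus $z,z'\in X_k$ are nonadjacent and $\lambda_k^j(z)\neq\lambda_k^j(z')$. By \eqref{eq:all-mixed-affine} for the pair $jk$, every $y\in X_j$ is adjacent to exactly one of $z,z'$. Because $\lambda_k^i$ is constant, $\lambda_k^i(z)=\lambda_k^i(z')$. So every $x\in X_i$ has column $00$ or $11$ with respect to the rows $z,z'$. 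Choosing $x$ in the fibre of $\lambda_i^k$ that is anticomplete to $X_k$, the column of $x$ is $00$.

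Next I will find a neighbour of this $x$ in $X_j$. The interface $ij$ is two-sided, so $\lambda_j^i$ is nonconstant. As in the proof of Lemma~\ref{lem:f124-triple-sync}, the neighbourhood of $x$ in $X_j$ is one of the two nonempty fibres of $\lambda_j^i$. Pick such a neighbour $y$. Then $x,y$ are adjacent tooth vertices lying outside $B_k$ (in $B_i$ and $B_j$). Among the four pairs between $\{z,z'\}$ and $\{x,y\}$, the vertex $x$ contributes no edge and $y$ contributes exactly one. This rectangle has exactly one edge, contradicting Lemma~\ref{lem:f124-one-edge-rectangle} applied with $z,z'\in B_k$. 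Its witness is \eqref{eq:all-mixed-one-edge-witness} with $h_k$ in the role of $h_i$.

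The main obstacle is choosing the right tooth at which to apply the rectangle lemma. Working at $X_i$, where both partner functions are nonconstant, can leave a three-edge rectangle whose two columns lie in different teeth, and then Lemma~\ref{lem:f124-three-edge-rectangle} is not available. Working at the constant endpoint $X_k$ avoids this. There the constancy of $\lambda_k^i$ forces every $X_i$-column to be $00$ or $11$, and the nonconstancy of $\lambda_i^k$ guarantees a $00$ column. The only remaining checks are routine: the nonedge $zz'$ exists by anticonnectedness, and the neighbour $y$ exists by two-sidedness of $ij$. Neither step uses adjacency among the handles beyond the standing hypotheses.
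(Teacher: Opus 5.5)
Your proof is correct and is the paper's argument with the roles of $i$ and $k$ interchanged. The paper also works at the constant endpoint of the one-sided pair: it takes a nonedge there crossing the cut of the adjacent two-sided interface, picks a vertex with column $00$ from the nonconstant endpoint, and takes a neighbour of that vertex in $X_j$ via the remaining two-sided interface, which yields the forbidden one-edge rectangle.
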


\begin{proof}
Suppose first that the one-sided pair is oriented $k\to i$.  Since
$\lambda_i^j$ is nonconstant and $\overline{G[X_i]}$ is connected,
there is a nonedge $xx'$ of $G[X_i]$ whose ends have different
$\lambda_i^j$-values.  Formula~\eqref{eq:all-mixed-affine} says that,
for every $y\in X_j$,
exactly one of $xy,x'y$ is an edge.

On the pair $ik$, the function $\lambda_i^k$ is constant and
$\lambda_k^i$ is nonconstant.  Consequently one of the two nonempty
fibres of $\lambda_k^i$ consists of vertices whose columns on
$\{x,x'\}$ are $00$; choose $z$ in that fibre.  Since $jk$ is two-sided,
$\lambda_j^k$ is nonconstant.  For this fixed $z$,
formula~\eqref{eq:all-mixed-affine} makes
$N_{X_j}(z)$ one of the two nonempty fibres of $\lambda_j^k$.  Choose
$y\in N_{X_j}(z)$.

Now $yz$ is an edge, its column on $\{x,x'\}$ is a singleton, and the
$z$-column is $00$.  This is exactly the forbidden one-edge rectangle
\eqref{eq:all-mixed-one-edge-witness}.  If $ik$ is oriented $i\to k$,
interchange $i,k$ and repeat the
same argument.
\end{proof}

\begin{corollary}[The auxiliary graph is a cluster graph]
\label{cor:f124-type-cluster}
Let $K$ be the graph on $I$ in which $ij$ is an edge precisely when the
interface $ij$ is two-sided.  Then $K$ is $P_3$-free, and hence every
connected component of $K$ is a clique.  In particular $K$ is
$\{F,T\}$-free.
\end{corollary}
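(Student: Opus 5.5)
The plan is to read the corollary off Lemma~\ref{lem:f124-type-no-p3}, followed by two short graph-theoretic observations. First I show that $K$ is $P_3$-free. An induced $P_3$ in $K$ is a triple $i,j,k$ with $ij,jk\in E(K)$ and $ik\notin E(K)$. By the definition of $K$, the first two pairs are two-sided interfaces. The pair $ik$ is not two-sided, and every interface in this section is mixed, so $ik$ is one-sided. This is exactly the configuration excluded by Lemma~\ref{lem:f124-type-no-p3}. The only point to check is that the classification ``two-sided/one-sided'' is well defined. This was verified just before Lemma~\ref{lem:f124-type-no-p3}, where it is shown not to depend on the chosen affine representation \eqref{eq:all-mixed-affine}. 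So $K$ has no induced $P_3$.

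Second, I derive the cluster structure by the standard argument. Suppose a connected component of $K$ were not a clique. It would then contain two nonadjacent vertices $u,v$. Take a shortest $u$--$v$ path in that component; it has at least three vertices, and its first three vertices induce $P_3$, a contradiction. Hence every component is a clique, that is, $K$ is a disjoint union of cliques.

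Third, I show that $K$ is $\{F,T\}$-free. Since induced subgraphs of a $P_3$-free graph are $P_3$-free, it suffices to exhibit an induced $P_3$ in each of $F$ and $T$. In $F$ the path $a-b-c$ is induced, since $ac\notin E(F)$. In $T$, by \eqref{eq:T-edge-set} and \eqref{eq:T-nonedge-set}, the vertices $a,c,f$ satisfy $ac,cf\in E(T)$ and $af\notin E(T)$, so they induce a $P_3$. Thus neither $F$ nor $T$ is an induced subgraph of $K$.

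No step is a real obstacle; all the substantive work is contained in Lemma~\ref{lem:f124-type-no-p3}. The only care needed is the bookkeeping that every non-edge of $K$ is a one-sided interface, which uses the standing assumption that all interfaces are mixed.
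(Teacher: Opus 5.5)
Your proof is correct and follows the paper's argument. Lemma~\ref{lem:f124-type-no-p3} excludes an induced $P_3$, since every non-two-sided pair is one-sided because all interfaces are mixed, and the shortest-path argument then gives the cluster structure. The only cosmetic difference is that you exhibit explicit induced copies of $P_3$ ($a-b-c$ in $F$ and $a-c-f$ in $T$), whereas the paper simply notes that $F$ and $T$ are connected and noncomplete.
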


\begin{proof}
An induced $P_3$ in $K$ is precisely the configuration excluded by
Lemma~\ref{lem:f124-type-no-p3}.  A connected noncomplete graph contains
an induced $P_3$: take the first three vertices of a shortest path
between two nonadjacent vertices.  Thus every component of $K$ is a
clique.  Both $F$ and $T$ are connected and noncomplete, so neither
is an induced subgraph of a disjoint union of cliques.
\end{proof}

We also record the complete-or-anticomplete conclusion in the one-sided
case.  A pure blockade of only polynomial length is not, by itself, one
of the three outcomes of property $(*)$.

\begin{lemma}[The one-sided pattern is an interval graph]
\label{lem:f124-one-sided-interval}
Suppose $J=\{1,\ldots,s\}\subseteq I$ and every interface on $J$ is
one-sided.  Then there are $X'_i\subseteq X_i$ with
\begin{equation}
                         |X'_i|\geq |X_i|/s
 \label{eq:one-sided-cell-size}
\end{equation}
such that $(X'_i:i\in J)$ is a pure blockade and its pattern is an
interval graph.  Consequently the pattern has a clique or stable set
of order at least $\sqrt{s}$.
\end{lemma}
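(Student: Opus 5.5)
Since every interface on $J$ is one-sided, each pair $ij$ has an orientation, and the only nonconstant endpoint functions are the $\lambda_i^j$ with $i\to j$. Each such $\lambda_i^j$ partitions $X_i$ into two fibres. The plan is to prove two structural facts and then refine. The first is that the orientation is transitive: if $i\to j$ and $j\to k$, then $i\to k$. The second is that at each tooth $X_i$, the cuts $\{\lambda_i^j:i\to j\}$ are nested: after possibly complementing, their $1$-fibres form a chain under inclusion. Given these, refine each $X_i$ by the common refinement of a chain of at most $s-1$ cuts. This refinement has at most $s$ cells. Take $X'_i$ to be a largest cell, so $|X'_i|\geq|X_i|/s$, which is \eqref{eq:one-sided-cell-size}. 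Every interface then becomes constant on $X'_i\times X'_j$, because the constant endpoint contributes nothing and the nonconstant endpoint is constant on a cell. Hence the blockade is pure.

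Both structural facts should come from Lemma~\ref{lem:f124-one-edge-rectangle}, arguing as in Lemma~\ref{lem:f124-type-no-p3}. For nestedness, suppose $i\to j$ and $i\to k$ give crossing cuts at $X_i$. Anticonnectedness of $X_i$ supplies a nonedge $xx'$ whose columns produce a forbidden one-edge rectangle with a suitable adjacent pair $y\in X_j$, $z\in X_k$. The main obstacle is finding an adjacent pair $yz$: the $jk$ interface is mixed, but it is only one-sided, so I would case on its orientation. For transitivity, the configuration $i\to j$, $k\to j$ forces nothing, so the real content is excluding a directed $3$-cycle or inconsistent patterns. I would test each orientation pattern of a triangle against the same witness \eqref{eq:all-mixed-one-edge-witness}.

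Next I would identify the pattern. Each $i$ should be assigned an interval determined by the position of its chosen cell $X'_i$ in the chain of cuts, with adjacency $ij$ decided by which side of $\lambda_i^j$ contains the cell. If the first step fails to yield a clean interval model, the fallback is to prove directly that the pattern is chordal and has no asteroidal triple (the Lekkerkerker--Boland characterisation), again by excluding each bad configuration with a rectangle lemma. I expect this identification to be the delicate part, since it may require choosing the cell consistently, for example always the largest, and showing that the chosen side of each cut corresponds to a point in the chain.

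The final clause is standard. Interval graphs are perfect, so $\alpha(P)\,\omega(P)\geq|P|=s$. Therefore one of $\alpha(P)$ and $\omega(P)$ is at least $\sqrt s$.
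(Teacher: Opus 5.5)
Your proposal has a genuine gap, and it sits exactly at the step you flagged as delicate. Nestedness ``after possibly complementing'' bounds the number of cells, but it says nothing about which later indices a chosen cell is complete to. The chain order need not match the orientation order, and each cut may enter the chain through either of its fibres. From that fact alone, the set of targets a cell is complete to can be an arbitrary subset, so no interval structure follows.

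What you need is the order-compatible statement, with no complementation. Your transitivity step is the right idea. Order $J$ so that $i\to j$ exactly when $i<j$, and put $S_i^j=\{x\in X_i: x \text{ complete to } X_j\}$. Then
\[
S_i^{i+1}\supseteq S_i^{i+2}\supseteq\cdots\supseteq S_i^{s}.
\]
With this chain, every cell has a threshold $t_i$ such that, for $i<j$, $X'_i$ is complete to $X'_j$ if and only if $j\le t_i$. The intervals $[i,t_i]$ then realise the pattern. No careful choice of cell and no Lekkerkerker--Boland argument is needed.

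Your proposed proof of nestedness would also not go through. When $i\to j$ and $i\to k$, every $y\in X_j$ has the same column on $\{x,x'\}$, and likewise every $z\in X_k$. So the adjacent pair $yz$ is not the obstacle: any edge of the mixed $jk$ interface will do.

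The real obstacle is the pair of rows. A one-edge rectangle needs a nonedge of $X_i$ joining a vertex anticomplete to both $X_j$ and $X_k$ to a vertex complete to exactly one of them. Anticonnectedness does not supply this. For example, $\overline{G[X_i]}$ could be a star centred at a vertex complete to both, and then every such rectangle has two or three edges.

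The fix is to take the rows in the source tooth of the $j\to k$ arc. Suppose $i<j<k$ and $x\in S_i^k\setminus S_i^j$.
\begin{itemize}
\item Choose a nonedge $yy'$ of $X_j$ crossing the $j\to k$ cut, with $y$ complete and $y'$ anticomplete to $X_k$.
\item Since $i\to j$, the vertex $x$ is anticomplete to $X_j$, while it is complete to $X_k$.
\item Hence, for any $z\in X_k$, the adjacent pair $z,x$ meets $\{y,y'\}$ in the single edge $yz$.
\end{itemize}
This contradicts Lemma~\ref{lem:f124-one-edge-rectangle}. Your cell count, the purity of the refined blockade, and the final perfectness bound are all fine once this stronger nesting is in place.
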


\begin{proof}
The one-edge rectangle lemma first implies that the orientation of
the one-sided pairs has no directed triangle.  Indeed, for a directed
triangle $i\to j\to k\to i$, choose a nonedge $xx'$ of $X_i$ crossing
the nonconstant $i\to j$ cut.  The $k\to i$ interface supplies a vertex
$z\in X_k$ with column $00$ on $\{x,x'\}$, and the $j\to k$ interface
supplies a neighbour $y\in X_j$ of $z$.  The $y$-column is a singleton,
contradicting \eqref{eq:all-mixed-one-edge-witness}.  Hence the
tournament is transitive; order $J$ so
that $i\to j$ exactly when $i<j$.

For $i<j$, put
\[
 S_i^j=\{x\in X_i:x\text{ is complete to }X_j\}.
\]
These sets are nested:
\begin{equation}
             S_i^{i+1}\supseteq S_i^{i+2}\supseteq\cdots
                         \supseteq S_i^s.
 \label{eq:one-sided-nesting}
\end{equation}
To verify \eqref{eq:one-sided-nesting}, suppose $i<j<k$ and
$x\in S_i^k\setminus S_i^j$.  The nonconstant cut on the source
$X_j$ of $j\to k$, together with anticonnectivity of $X_j$, gives a
nonedge $yy'$ crossing its two fibres.  Say $y$ is complete and $y'$ is
anticomplete to $X_k$.  For any $z\in X_k$, the outside pair $x,z$ is
adjacent, while the rectangle
$\{y,y'\}\times\{z,x\}$ has exactly the one edge $yz$, contradicting
\eqref{eq:all-mixed-one-edge-witness}.

The chain \eqref{eq:one-sided-nesting} has at most $s$ membership cells.
Choose a largest cell $X'_i$, which proves
\eqref{eq:one-sided-cell-size}.  Membership in every $S_i^j$ is constant on
$X'_i$, so the retained blocks are pairwise pure.  There is an integer
$t_i\in\{i,i+1,\ldots,s\}$ such that, for $i<j$,
\begin{equation}
             X'_i\text{ is complete to }X'_j
                    \quad\Longleftrightarrow\quad j\leq t_i.
 \label{eq:one-sided-threshold}
\end{equation}
(Take $t_i=i$ when the cell belongs to none of the sets in
\eqref{eq:one-sided-nesting}.)
Represent the $i$th block by the closed interval $[i,t_i]$.  For
$i<j$, these two intervals meet exactly when $j\leq t_i$, which by
\eqref{eq:one-sided-threshold}
is exactly when the two blocks are complete.  Thus their pattern is an
interval graph.

An interval graph $P$ satisfies
$\chi(P)=\omega(P)$: process intervals by increasing left endpoint and
give each the first colour not used by an interval containing that
endpoint; all such previously active intervals form a clique.  A largest
colour class is stable, so
$|P|\leq\alpha(P)\chi(P)=\alpha(P)\omega(P)$.  Therefore
$\max(\alpha(P),\omega(P))\geq\sqrt{|P|}=\sqrt{s}$.
\end{proof}

\begin{theorem}[Extraction from pairwise mixed interfaces]
\label{thm:f124-mixed-affine-extraction}
Let $m=|I|\geq4$ and suppose $|X_i|\geq W$ for every $i\in I$.  Let
$\eta>0$ be an Erd\H{o}s--Hajnal exponent for $\{F,T\}$.  Put
\begin{equation}
                         \delta=\min\{\eta/2,1/4\}.
 \label{eq:mixed-extraction-delta}
\end{equation}
Then $G$ has a complete or anticomplete blockade of length at least
$m^\delta$ and width at least $W/m$.
\end{theorem}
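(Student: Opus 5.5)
We split $I$ according to the cluster graph $K$ of Corollary~\ref{cor:f124-type-cluster}. By that corollary, $K$ is a disjoint union of cliques, so either some component has at least $\sqrt m$ vertices or there are at least $\sqrt m$ components. In each case we pass to a suitable subset $J\subseteq I$ and apply one of the two extraction results already proved.

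\emph{Large component.} Let $J$ be a component of $K$ with $|J|\geq\sqrt m\geq2$. Every interface inside $J$ is two-sided, the handles are pairwise nonadjacent, and each $X_i$ is anticonnected. So the hypotheses of Corollary~\ref{cor:f124-full-cut-extraction} hold on $J$. We choose each $X'_i$ to be one fibre of the synchronised label, namely a largest one. Then $|X'_i|\geq|X_i|/2\geq W/2\geq W/m$, since $m\geq4$. Corollary~\ref{cor:f124-sync-quotient-pairfree} says the pattern is $\{F,T\}$-free, so it has a clique or stable set $R$ with $|R|\geq|J|^\eta\geq m^{\eta/2}\geq m^\delta$. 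The blocks $(X'_i:i\in R)$ then form a complete or anticomplete blockade of the required length and width.

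\emph{Many small components.} Pick one index from each of $s\geq\lceil\sqrt m\rceil$ components, giving $J$. Any two chosen indices lie in different components of $K$, so every interface on $J$ is one-sided. Lemma~\ref{lem:f124-one-sided-interval} gives $X'_i\subseteq X_i$ with $|X'_i|\geq |X_i|/s$, forming a pure blockade with interval pattern. Since $s\leq m$, each block has size at least $W/m$. The lemma also supplies a clique or stable set of order at least $\sqrt s\geq m^{1/4}\geq m^\delta$. Restricting to those blocks gives the conclusion.

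The only points needing care are bookkeeping. First, Lemma~\ref{lem:f124-one-sided-interval} must be applied to the subfamily $J$ after relabelling it as $\{1,\dots,s\}$; its transitivity argument works on any index set, so this is fine. Second, we must check that the interfaces remain mixed and affine on the chosen $J$, which holds because these are hypotheses on all pairs of $I$. Third, the exponent comparison is $|J|^\eta\geq m^{\eta/2}$ in the first case and $\sqrt s\geq m^{1/4}$ in the second, which together explain $\delta=\min\{\eta/2,1/4\}$.

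The main obstacle is making sure the dichotomy of components is exhaustive with the right exponents: if every component has fewer than $\sqrt m$ vertices, then there are more than $\sqrt m$ components. This is immediate. Integrality is handled by taking $s=\lceil\sqrt m\rceil\leq m$.
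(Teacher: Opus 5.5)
Your proposal is correct and follows the paper's own proof essentially step for step. On a component of the cluster graph $K$ of order at least $\sqrt m$ you take synchronised fibres and use the $\{F,T\}$ exponent; otherwise you pick one index from each of more than $\sqrt m$ components and apply the interval-pattern lemma, obtaining length at least $\sqrt s\geq m^{1/4}$ and width at least $W/s\geq W/m$.
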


\begin{proof}
Let $K$ be the cluster graph from
Corollary~\ref{cor:f124-type-cluster}.  If a component $C$ of $K$ has
order at least $\sqrt m$, all its interfaces are two-sided.
Corollary~\ref{cor:f124-full-cut-extraction} gives one label fibre of
order at least $W/2$ in every tooth of $C$, and
Corollary~\ref{cor:f124-sync-quotient-pairfree} says that the resulting
pure-blockade pattern is $\{F,T\}$-free.
Proposition~\ref{prop:f124-leaf-pair} therefore gives a clique or stable
set of blocks of order at least
$|C|^\eta\geq m^{\eta/2}$.  Their width is at least $W/2\geq W/m$.

Otherwise every component of $K$ has order less than $\sqrt m$, so
$K$ has more than $\sqrt m$ components.  Choose one index from each of
$s\geq\sqrt m$ components.  All selected interfaces are one-sided.
Lemma~\ref{lem:f124-one-sided-interval} gives blocks of width at least
$W/s\geq W/m$ and a complete or anticomplete subblockade of length at
least $\sqrt s\geq m^{1/4}$.
Equation~\eqref{eq:mixed-extraction-delta} covers both cases.
\end{proof}

\begin{corollary}[Parameter conversion]
\label{cor:f124-mixed-affine-bookkeeping}
Suppose the system in Theorem~\ref{thm:f124-mixed-affine-extraction} is
obtained from an $(\ell,w)$-comb, with $I\subseteq[\ell]$, equipped with
a vertex complete to all teeth and anticomplete to all handles, and
suppose fixed $p,q>0$ satisfy
\[
                         m\geq\ell^p,
              \qquad    W\geq w/\ell^q.
\]
Then it gives the complete-or-anticomplete blockade outcome required in
the quantitative comb condition of
Definition~\ref{def:comb-hypothesis}: its
length $k$ and
width satisfy
\begin{equation}
              k\geq\ell^{p\delta},
       \qquad \text{width}\geq \frac{w}{k^{(q+1)/(p\delta)}}.
 \label{eq:mixed-conversion-bounds}
\end{equation}
\end{corollary}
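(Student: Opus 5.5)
Since Theorem~\ref{thm:f124-mixed-affine-extraction} has already been proved, only the exponent arithmetic remains. We apply that theorem to the given system, which yields a complete or anticomplete blockade of length $k_0\geq m^\delta$ and width at least $W/m$. We first trim this blockade to the correct length and then express its width in terms of $w$ and $k$.

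\emph{Length.} From $m\geq\ell^p$ we get $k_0\geq m^\delta\geq\ell^{p\delta}$. We keep the full blockade and set $k=k_0$. Then $k\geq\ell^{p\delta}$ holds. Equivalently, $\ell\leq k^{1/(p\delta)}$; this is the inequality we use for the width.

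\emph{Width.} Since $I\subseteq[\ell]$, we have $m\leq\ell$. Hence
\[
 \frac{W}{m}\geq\frac{w}{\ell^{q}\cdot\ell}=\frac{w}{\ell^{q+1}}
 \geq\frac{w}{k^{(q+1)/(p\delta)}},
\]
where the last step substitutes $\ell\leq k^{1/(p\delta)}$. This is \eqref{eq:mixed-conversion-bounds}.

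\emph{Matching the definition.} It remains to check that the constants fit the form of outcome (U) in Definition~\ref{def:comb-hypothesis}, namely a complete or anticomplete $(k,w/k^b)$-blockade with $k\geq\ell^g$. We take $g=p\delta$ and $b=(q+1)/(p\delta)$. Both are fixed positive constants, because $\delta$ depends only on $\eta$. The hypothesis $m\geq4$ of the theorem should be inherited from the setting in which the corollary is applied; if needed it follows from $\ell^p\geq4$ when $\ell$ is large. The only real subtlety is that $\ell\leq k^{1/(p\delta)}$ must be applied with the actual length $k$ of the output blockade, and not with some lower bound on $k$. Keeping the whole blockade, rather than truncating it to length exactly $\ell^{p\delta}$, makes this work, since a larger $k$ only weakens the required width bound $w/k^b$.
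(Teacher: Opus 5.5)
Your argument is correct and is exactly the paper's: $k\geq m^\delta\geq\ell^{p\delta}$ inverts to $\ell\leq k^{1/(p\delta)}$, and $m\leq\ell$ turns the width $W/m$ into $w/\ell^{q+1}\geq w/k^{(q+1)/(p\delta)}$. Your closing remark overstates the subtlety. The same inversion works for any $k\geq\ell^{p\delta}$, including truncation to $\lceil\ell^{p\delta}\rceil$ blocks, so keeping the whole blockade is harmless but not necessary.
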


\begin{proof}
Theorem~\ref{thm:f124-mixed-affine-extraction} gives
$k\geq m^\delta\geq\ell^{p\delta}$ and width at least
$W/m\geq w/\ell^{q+1}$, since $m\leq\ell$.  The first inequality in
\eqref{eq:mixed-conversion-bounds} gives
$\ell^{q+1}\leq k^{(q+1)/(p\delta)}$, proving the second.
\end{proof}

This completes the analysis of a family in which every interface is
mixed and additively separable over $\mathbb F_2$.

\section{The auxiliary graph recording mixed interfaces}
\label{sec:outer-graph}

Let $T=\overline F$.  Work in a $T$-free graph containing an
$(\ell,w)$-comb equipped with a vertex $r$ as in
Section~\ref{sec:preliminaries}.  Let $X_i\subseteq B_i$ be
anticonnected for every retained index, and suppose that every pairwise
interface is additively separable over $\mathbb F_2$:
\begin{equation}
 M_{ij}(x,y)=\lambda_i^j(x)\oplus\lambda_j^i(y)\oplus\gamma_{ij}.
 \label{eq:outer-affine}
\end{equation}

\subsection{Local laws for the auxiliary graph}

Let $M$ be the graph on the retained tooth indices in which $ij$ is an
edge exactly when the interface $X_i$--$X_j$ is mixed.  A mixed
interface is \emph{two-sided} if both endpoint functions in
\eqref{eq:outer-affine} are nonconstant.  Otherwise it is
\emph{one-sided}, oriented from its
nonconstant endpoint to its constant endpoint.  A nonedge of $M$ is
labelled $+$ if its interface is complete and $-$ if its interface is
anticomplete.
When capital letters such as $A,B,C$ denote vertices of $M$, the
corresponding tooth and handle are denoted by $X_A$ and $h_A$.

We repeatedly use Lemma~\ref{lem:f124-one-edge-rectangle}: if
$x,x'\in B_i$ are nonadjacent and adjacent tooth vertices $y,z$ lie outside
$B_i$, then exactly one edge in
$\{x,x'\}\times\{y,z\}$ would imply, after relabelling, that
\begin{equation}
       (a,b,c,d,e,f)\longmapsto(x,x',y,h_i,r,z)
 \label{eq:outer-one-edge-witness}
\end{equation}
is an induced $T$.

\begin{lemma}[Propagation of anticompleteness]
\label{lem:f124-anti-propagation}
Suppose $ij\in E(M)$ and the $X_i$ endpoint function of the $ij$
interface is nonconstant.  If $X_i$ is anticomplete to $X_k$, then
$X_j$ is anticomplete to $X_k$.
\end{lemma}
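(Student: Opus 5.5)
Suppose, for a contradiction, that some $y\in X_j$ has a neighbour $z\in X_k$.  The plan is to build a forbidden one-edge rectangle, in the sense of Lemma~\ref{lem:f124-one-edge-rectangle}, whose two rows lie in $X_i$ and whose two columns are $y$ and $z$.

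First we choose the rows.  Since $\lambda_i^j$ is nonconstant and $X_i$ is anticonnected, the graph $\overline{G[X_i]}$ is connected, so some edge of it crosses the two nonempty fibres of $\lambda_i^j$.  This gives nonadjacent $x,x'\in X_i$ with $\lambda_i^j(x)\ne\lambda_i^j(x')$.

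Next we read off the two columns.  By \eqref{eq:outer-affine}, for every vertex of $X_j$, and in particular for $y$, exactly one of $xy,x'y$ is an edge.  So the $y$-column on $\{x,x'\}$ is a singleton.  By the hypothesis that $X_i$ is anticomplete to $X_k$, the $z$-column is $00$.

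Now we apply the lemma.  The vertices $y,z$ are adjacent tooth vertices lying outside $B_i$, one in $B_j$ and one in $B_k$.  The rectangle $\{x,x'\}\times\{y,z\}$ therefore has exactly one edge, which Lemma~\ref{lem:f124-one-edge-rectangle} forbids; explicitly, \eqref{eq:outer-one-edge-witness} gives an induced $T$ after relabelling.  Hence $y$ has no neighbour in $X_k$, and $X_j$ is anticomplete to $X_k$.

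The only step needing care is the applicability of the one-edge lemma: its proof requires $y,z$ to lie outside $B_i$ and to be nonadjacent to $h_i$.  Both hold because $j,k\neq i$ and the handle $h_i$ is anticomplete to every other tooth.  The lemma imposes no condition on the pairs $h_ih_j$ or $h_ih_k$, so nothing more is needed.  The argument does not use that $ij$ is mixed beyond the nonconstancy of $\lambda_i^j$.
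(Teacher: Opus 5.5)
Your proof is correct and follows the paper's argument exactly: take a nonedge $xx'$ of $X_i$ crossing the nonconstant $ij$ cut, observe that every $y\in X_j$ has a singleton column on $\{x,x'\}$ while any $z\in X_k$ has column $00$, and apply the one-edge rectangle lemma to an edge $yz$. Your added check that $y,z$ lie in teeth other than $B_i$, and so are nonadjacent to $h_i$, is precisely the hypothesis that lemma needs.
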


\begin{proof}
Choose a nonedge $xx'$ of $X_i$ crossing the nonconstant $ij$ cut.
For every $y\in X_j$, its column on $\{x,x'\}$ is a singleton.  If
there were an edge $yz$ with $z\in X_k$, then the $z$-column would be
$00$, and \eqref{eq:outer-one-edge-witness} would apply.  Hence no such
edge exists.
\end{proof}

In particular, along a one-sided arc $i\to j$, every tooth
anticomplete to $X_i$ is also anticomplete to $X_j$.  Across a
two-sided edge, the endpoints have the same anticomplete interfaces
outside the pair.

\begin{lemma}[Exact induced-$P_3$ laws]
\label{lem:f124-outer-p3-laws}
Let $A-B-C$ be an induced $P_3$ of $M$.
\begin{enumerate}
 \item If the pure $A$--$C$ interface is anticomplete, then both mixed
       interfaces are one-sided and are oriented $B\to A$ and $B\to C$.
 \item If one incident interface is two-sided and the other is one-sided,
       then $A$--$C$ is complete and the one-sided edge is oriented
       outwards from the centre $B$.
 \item If both incident interfaces are one-sided, they cannot form a
       directed path through $B$.  Thus they are both directed into
       $B$ or both directed out of $B$; the anticomplete endpoint sign
       is possible only in the latter case.
 \item If both incident interfaces are two-sided, then $A$--$C$ is complete,
       and the two endpoint cuts on $X_B$ agree up to complementation.
\end{enumerate}
\end{lemma}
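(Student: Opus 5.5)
The plan is to derive all four laws from two earlier tools: the one-edge rectangle witness \eqref{eq:outer-one-edge-witness}, and the propagation Lemma~\ref{lem:f124-anti-propagation}. Each case uses the same template. In $X_B$ we choose a nonedge $yy'$ that crosses a nonconstant endpoint cut; this exists because $X_B$ is anticonnected, so some edge of $\overline{G[X_B]}$ joins the two nonempty fibres. By \eqref{eq:outer-affine}, every vertex on the far side of that interface then has a singleton column on $\{y,y'\}$. On the other interface, the $X_B$ endpoint function is constant on $\{y,y'\}$, so every vertex there has column $00$ or $11$. Whenever that interface's far-side function is nonconstant, the far side has a nonempty fibre consisting of $00$-columns. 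If some $00$-vertex is adjacent to some singleton-column vertex, we have exactly one edge in the rectangle, which is forbidden.

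\textbf{Law 1.} Suppose $X_A$ is anticomplete to $X_C$. If the $X_A$ endpoint function on the $AB$ interface were nonconstant, Lemma~\ref{lem:f124-anti-propagation} (with $i=A$, $j=B$, $k=C$) would make $X_B$ anticomplete to $X_C$. This contradicts $BC\in E(M)$. Hence that function is constant. Since $AB$ is mixed, the $B$-side function must then be nonconstant, so $AB$ is one-sided and oriented $B\to A$. The symmetric argument gives $B\to C$.

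\textbf{Law 2.} Let $AB$ be two-sided and $BC$ one-sided. By Law~1 the $A$--$C$ interface is not anticomplete, so it is complete. Suppose, for contradiction, that $BC$ is oriented $C\to B$. Apply the template with $yy'\subseteq X_B$ crossing the $\lambda_B^A$ cut. Every $x\in X_A$ has a singleton column. Because $\lambda_B^C$ is constant and $\lambda_C^B$ is nonconstant, some $z\in X_C$ has column $00$. Since $A$--$C$ is complete, $z$ is adjacent to any $x$, which yields a one-edge rectangle. Hence the orientation is $B\to C$.

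\textbf{Law 3.} Suppose both interfaces are one-sided and form a directed path, say $A\to B\to C$. Choose $yy'\subseteq X_B$ crossing the $\lambda_B^C$ cut, so every $z\in X_C$ has a singleton column. On $AB$ the $B$-side is constant and the $A$-side nonconstant, so some $x\in X_A$ has column $00$. The pure $A$--$C$ interface cannot be anticomplete, since Law~1 would force both arcs out of $B$. So it is complete, and $xz$ is an edge for any $z$, giving the forbidden rectangle. Therefore both arcs point in, or both point out. Law~1 shows that the anticomplete sign forces both arcs to point out, which gives the final clause.

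\textbf{Law 4.} By Law~1, an anticomplete $A$--$C$ would force one-sided edges, so $A$--$C$ is complete. For the cut agreement, we follow the proof of Lemma~\ref{lem:f124-triple-sync}, with completeness of $A$--$C$ replacing the two-sidedness of the third interface. Suppose a nonedge $yy'$ of $X_B$ has $\lambda_B^A(y)\neq\lambda_B^A(y')$ but $\lambda_B^C(y)=\lambda_B^C(y')$. Then every $x\in X_A$ has a singleton column, and since $\lambda_C^B$ is nonconstant some $z\in X_C$ has column $00$. The edge $xz$ then gives a one-edge rectangle, a contradiction. Exchanging $A$ and $C$ gives the reverse implication. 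Thus $\lambda_B^A\oplus\lambda_B^C$ is constant across every edge of the connected graph $\overline{G[X_B]}$, hence constant.

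The main point of care is the bookkeeping of which endpoint function is constant and which fibre supplies the $00$ column. The combinatorics are otherwise uniform. One should also check that only the one-edge lemma is used, so that no handle nonadjacency hypothesis is needed here.
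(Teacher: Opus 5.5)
Your proposal is correct and follows the paper's proof essentially step for step. Each law comes from the one-edge rectangle witness: a crossing nonedge gives singleton columns, the other interface supplies a $00$ column, and Law~1 excludes the anticomplete sign in Laws~2--4. The only difference is cosmetic: in Law~1 you cite Lemma~\ref{lem:f124-anti-propagation} instead of repeating inline the same rectangle argument that proves it.
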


\begin{proof}
For part~(1), if the $A$ endpoint of $AB$ were nonconstant, a crossing
nonedge in $X_A$, an edge of the mixed $B$--$C$ interface, and the
anticomplete $A$--$C$ interface would give
\eqref{eq:outer-one-edge-witness}.  Thus $AB$ is
one-sided from $B$ to $A$; symmetrically it is $B\to C$.

For part~(2), suppose $AB$ is two-sided and the one-sided edge $BC$ is oriented
$C\to B$.  Take a nonedge $bb'$ crossing the $B$-endpoint cut toward
$A$.  A vertex of $X_C$ can be chosen with column $00$ on
$\{b,b'\}$, while every $X_A$-column is a singleton.  Since $A$--$C$
cannot be anticomplete by part~(1), it is complete, and
\eqref{eq:outer-one-edge-witness} is obtained.
The other placement is symmetric.

For part~(3), consider $A\to B\to C$.  A nonedge of $X_B$ crossing the
$B\to C$ cut has a singleton $X_C$-column, while the $A\to B$
interface supplies an $X_A$-column equal to $00$.  The endpoint
interface cannot be anticomplete by part~(1), and if it is complete these
two outside vertices are adjacent, giving
\eqref{eq:outer-one-edge-witness}.  The reverse directed
path is symmetric.

For part~(4), anticompleteness is excluded by part~(1).  If a nonedge $bb'$ of
$X_B$ crosses the cut toward $A$ but not the cut toward $C$, then a
vertex of $X_C$ has column $00$ on $\{b,b'\}$ (the $C$ endpoint
function is nonconstant), while every vertex of $X_A$ gives a singleton
column.  Completeness of $A$--$C$ gives
\eqref{eq:outer-one-edge-witness}.  Interchanging $A,C$
proves the reverse implication.  Since $\overline{G[X_B]}$ is connected,
the two cuts differ by a constant.
\end{proof}

\begin{lemma}[Boundary refinement]
\label{lem:f124-boundary-refinement}
Let $E-A-B$ be an induced $P_3$ of $M$, with $A\to E$ one-sided and
$AB$ two-sided.  Then $E$--$B$ is complete, and every nonedge of $X_A$
which crosses the $A\to E$ cut also crosses the $A$-endpoint two-sided cut
toward $B$.
\end{lemma}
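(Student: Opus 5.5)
The statement is the configuration of part~(2) of Lemma~\ref{lem:f124-outer-p3-laws}, with centre $A$ and outward one-sided edge $A\to E$. The plan is to read off the first assertion from that lemma, then prove the refinement claim by contradiction with the one-edge rectangle \eqref{eq:outer-one-edge-witness}.

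\textbf{Completeness of $E$--$B$.} Apply Lemma~\ref{lem:f124-outer-p3-laws}(2) to the induced path $E-A-B$. The edge $AB$ is two-sided and $AE$ is one-sided, so $E$--$B$ is complete. Consistently, the one-sided edge is oriented outward from $A$, which is the hypothesis $A\to E$. As a check, part~(1) alone already rules out anticompleteness, since anticompleteness would force $AB$ to be one-sided.

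\textbf{Refinement of cuts.} Let $xx'$ be a nonedge of $X_A$ crossing the $A\to E$ cut, meaning $\lambda_A^E(x)\neq\lambda_A^E(x')$, and suppose for contradiction that $\lambda_A^B(x)=\lambda_A^B(x')$.
\begin{itemize}
\item Since the $E$-endpoint function of $AE$ is constant, by \eqref{eq:outer-affine} every $z\in X_E$ has a singleton column on $\{x,x'\}$.
\item Since $\lambda_A^B$ agrees at $x,x'$, every $y\in X_B$ has column $00$ or $11$.
\item Since $\lambda_B^A$ is nonconstant, both values occur, so some $y\in X_B$ has column $00$.
\end{itemize}
Take such a $y$ and any $z\in X_E$. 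The pair $yz$ is an edge because $E$--$B$ is complete. The rectangle $\{x,x'\}\times\{y,z\}$ then has exactly one edge, the one from the singleton $z$-column. The vertices $y,z$ lie in teeth other than $B_A$, so Lemma~\ref{lem:f124-one-edge-rectangle} is violated, which is a contradiction.

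\textbf{Main obstacle.} The only delicate point is producing a $00$ column in $X_B$; a $11$ column would give three edges, and Lemma~\ref{lem:f124-three-edge-rectangle} requires the adjacent pair to lie in a single tooth. This is exactly where the two-sidedness of $AB$ is used: nonconstancy of $\lambda_B^A$ makes both fibres nonempty, so the $00$ fibre exists. Everything else is a direct application of earlier results.
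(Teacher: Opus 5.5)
Your proof is correct and follows the paper's argument: completeness of $E$--$B$ from the induced-$P_3$ laws, then a $00$ column in $X_B$ from the nonconstant $B$-endpoint function, which together with the singleton $X_E$-columns gives the forbidden one-edge rectangle. One small correction: the singleton $X_E$-columns come from $x,x'$ crossing the $\lambda_A^E$ cut, not from the constancy of the $E$-endpoint function, though this does not affect your conclusion.
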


\begin{proof}
The sign and orientation follow from
Lemma~\ref{lem:f124-outer-p3-laws}.  Let $xx'$ cross the first cut but
not the second.  Since the $B$ endpoint function on the two-sided interface
is nonconstant, choose $b\in X_B$ whose column on $\{x,x'\}$ is $00$.
Every $e\in X_E$ has a singleton column, and $eb$ is an edge.  This is
the forbidden rectangle \eqref{eq:outer-one-edge-witness}.
\end{proof}

\begin{lemma}[At most one two-sided edge on an induced $P_4$ of $M$]
\label{lem:f124-outer-p4-one-two-sided}
Every induced $P_4$ of $M$ contains at most one two-sided interface.
\end{lemma}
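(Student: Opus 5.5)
The plan is a case analysis on which edges of an induced $P_4$ $A-B-C-D$ of $M$ are two-sided, using the $P_3$ laws of Lemma~\ref{lem:f124-outer-p3-laws} on the two induced $P_3$'s $A-B-C$ and $B-C-D$. The case with a concrete contradiction is when $AB$ and $CD$ are two-sided and $BC$ is one-sided. The case with two consecutive two-sided edges is reduced to a sign configuration, but I have not yet found its contradiction.

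\emph{Step 1: $AB$ and $CD$ two-sided, $BC$ one-sided.} On the $P_3$ $A-B-C$, the edge $AB$ is two-sided and $BC$ is one-sided. Part~(2) of Lemma~\ref{lem:f124-outer-p3-laws} forces $BC$ to be oriented outwards from the centre $B$, so $B\to C$. On the $P_3$ $B-C-D$, the edge $CD$ is two-sided and $BC$ is one-sided. The same part now forces $BC$ outwards from the centre $C$, so $C\to B$. A one-sided interface has a single orientation, which is a contradiction. If $BC$ is also two-sided, the configuration falls into Step~2.

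\emph{Step 2: two consecutive two-sided edges.} By symmetry suppose $AB$ and $BC$ are two-sided. Part~(4) on $A-B-C$ makes $A$--$C$ complete and synchronises the two endpoint cuts on $X_B$. On $B-C-D$, part~(4) (if $CD$ is two-sided) or part~(2) (if $CD$ is one-sided, oriented $C\to D$) makes $B$--$D$ complete. Next, Lemma~\ref{lem:f124-anti-propagation} with $(i,j,k)=(A,B,D)$ applies, since the $A$-endpoint of $AB$ is nonconstant. If $X_A$ were anticomplete to $X_D$, then $X_B$ would be anticomplete to $X_D$, which is false. Hence $A$--$D$ is complete.

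So the reduced configuration is: $AB$ and $BC$ mixed and two-sided, $CD$ mixed, and $A$--$C$, $B$--$D$, $A$--$D$ all complete, with synchronised cuts at $B$ and (when $CD$ is two-sided) at $C$.

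\emph{Step 3, the main obstacle: ruling out this configuration.} The one-edge rectangle lemma alone does not obviously close the case. For a crossing nonedge in any one tooth, the other three teeth give only singleton or $11$ columns, and the lemma needs a $00$ column.

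I would first try to build an induced $T$ directly, as in Lemma~\ref{lem:f124-antitwin-quotient}. The ingredients are a crossing nonedge $bb'$ of $X_B$, neighbours chosen in the opposite fibres on $X_A$ and $X_C$, a vertex of $X_D$, the root $r$, and the handles $h_B,h_C$. These handles are pairwise nonadjacent over the mixed edges, complete to their own tooth and anticomplete to the others. A parity check on which pairs are forced edges indicates that $r$ should play the role of $e$ and a handle the role of $d$. Failing a direct witness, I would combine the three-edge rectangle lemma inside the mixed $B$--$C$ interface with the completeness of $X_D$ to both $X_B$ and $X_A$, looking for an odd rectangle.

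I expect this final configuration to be the real content of the lemma.
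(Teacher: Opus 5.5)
Your Steps 1 and 2 are correct and match the paper's reductions. In Step 1, part~(2) of Lemma~\ref{lem:f124-outer-p3-laws} orients the one-sided middle edge both ways. In Step 2 you correctly show that $A$--$C$, $B$--$D$ and $A$--$D$ are complete, the last via Lemma~\ref{lem:f124-anti-propagation}.

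\textbf{The gap.} You stop exactly where the lemma has its content. You give no contradiction for two consecutive two-sided edges, so the proposal only proves the case in which $AB,CD$ are two-sided and $BC$ is one-sided.

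Both directions you suggest for Step~3 are dead ends.
\begin{itemize}
\item Putting $r$ in the role of $e$ forces the vertex in role $d$ to be a handle. Its two neighbours in roles $a,b$ are then a nonedge of its own tooth. The rectangle $\{a,b\}\times\{c,f\}$ has exactly the one edge $ac$, over the edge $cf$. That is precisely Lemma~\ref{lem:f124-one-edge-rectangle}, which you already observed cannot fire here because no $00$ column is available.
\item Looking for an odd rectangle via Lemma~\ref{lem:f124-three-edge-rectangle} is vacuous. The affine form \eqref{eq:outer-affine} already makes every rectangle of every interface even.
\end{itemize}

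\textbf{The missing idea.} You need a witness that puts the handle $h_B$ in the degree-two role $b$ of $T$ and does not use $r$ at all.

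\emph{Choice of $b,b'$.} Take a nonedge $bb'$ of $X_B$ crossing the synchronised cut. Then every vertex of $X_A\cup X_C$ has a singleton column on $\{b,b'\}$.

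\emph{Choice of $z,d$.} Choose $z\in X_C$ and $d\in X_D$ nonadjacent.
\begin{itemize}
\item If $CD$ is two-sided, fix $z$ and use that $\lambda_D^C$ is nonconstant.
\item If $CD$ is one-sided, it is oriented $C\to D$. Take $z$ in the fibre of $\lambda_C^D$ that is anticomplete to $X_D$.
\end{itemize}

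\emph{Choice of $x$.} Rename $b,b'$ so that $z$ sees $b'$ and misses $b$. Since $\lambda_A^B$ is nonconstant, choose $x\in X_A$ with the same column, so that $x$ also sees $b'$ and misses $b$.

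\emph{The witness.} Then
\[
 (a,b,c,d,e,f)\longmapsto(d,h_B,x,b,b',z)
\]
is an induced $T$.
\begin{itemize}
\item $dx,db,db'$ are edges because $A$--$D$ and $B$--$D$ are complete.
\item $xz$ is an edge because $A$--$C$ is complete.
\item $h_Bb,h_Bb'$ are edges, and $dh_B,h_Bx,h_Bz$ are nonedges, by the comb incidences.
\item $xb',b'z$ are edges and $xb,bz$ are nonedges by the choice of columns.
\item $bb'$ and $dz$ are nonedges by choice.
\end{itemize}
This is the paper's argument, and it closes both subcases of Step~2 at once.
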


\begin{proof}
Write the path as $A-B-C-D$.  If $AB$ and $CD$ are two-sided while $BC$ is
one-sided, Lemma~\ref{lem:f124-outer-p3-laws} forces $BC$ to be oriented
$B\to C$ at the first centre and $C\to B$ at the second, impossible.

Suppose next that $AB,BC$ are two-sided.  The $A$--$C$ interface is complete
and the two cuts on $X_B$ synchronise.  If $CD$ is one-sided, the local
law gives $C\to D$ and makes $B$--$D$ complete.  The $A$--$D$ interface
is also complete: if it were anticomplete, propagation across the
$A$-endpoint, whose label function on $AB$ is nonconstant, would make
$B$--$D$ anticomplete.

Choose a nonedge $zz'$ of $X_C$ crossing the $C\to D$ cut.  It must
also cross the $C$-endpoint two-sided cut toward $B$: otherwise the varying
$B$ endpoint supplies a $00$ column, while a $D$-column is a singleton
and $B$--$D$ is complete, giving
\eqref{eq:outer-one-edge-witness}.  Relabel so that $z$ is
anticomplete to $X_D$.

Choose a nonedge $bb'$ of $X_B$ crossing its synchronised cuts toward
$A,C$.  For the fixed $z$, its column on $\{b,b'\}$ is a singleton.
Varying the $A$-endpoint label, choose $x\in X_A$ with the same
singleton column.  For any $d\in X_D$ the map
\begin{equation}
 (a,b,c,d,e,f)\longmapsto(d,h_B,x,b,b',z)
 \label{eq:outer-p4-witness}
\end{equation}
has required edges
\[
 dx,db,db',h_Bb,h_Bb',xb',b'z,xz
\]
and required nonedges
\[
 dh_B,dz,h_Bx,h_Bz,xb,bz,bb'.
\]
It is an induced $T$, a contradiction.

If $CD$ is two-sided, Lemma~\ref{lem:f124-outer-p3-laws}, applied to
$B-C-D$, first shows that $B$--$D$ is complete.  The $A$--$D$
interface is then also complete: if it were anticomplete, propagation
across the nonconstant $A$ endpoint of the two-sided interface $AB$
would make $B$--$D$ anticomplete, a contradiction.  After choosing the
synchronised $bb'$ and matching singleton columns $x,z$, choose
$d\in X_D$ nonadjacent to $z$ using the nonconstant $D$ endpoint of
$CD$.  The same map \eqref{eq:outer-p4-witness} applies.  This excludes two
consecutive two-sided edges and,
by symmetry, completes all cases.
\end{proof}

\subsection{Holes in \texorpdfstring{$M$}{M}}

\begin{theorem}[Every hole in \texorpdfstring{$M$}{M} has even length]
\label{thm:f124-outer-no-odd-hole}
Every induced hole of $M$ has even length.
\end{theorem}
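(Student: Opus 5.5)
Let $C=A_1-A_2-\cdots-A_n-A_1$ be an induced hole of $M$ with $n\geq5$ odd; we aim for a contradiction. Note that $n=5$ already needs treatment, since ``hole'' means length at least four and only odd $n$ matters. Every consecutive triple $A_{t-1}-A_t-A_{t+1}$ is an induced $P_3$ of $M$, so Lemma~\ref{lem:f124-outer-p3-laws} applies at every vertex of $C$. Every consecutive quadruple is an induced $P_4$, so Lemma~\ref{lem:f124-outer-p4-one-two-sided} applies to every window of three consecutive edges. Accordingly, I would first classify the edges of $C$ into two-sided edges and one-sided arcs, and then derive a parity constraint from the orientations.

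\textbf{Step 1: two-sided edges are isolated and flanked by outward arcs.} By Lemma~\ref{lem:f124-outer-p4-one-two-sided}, any three consecutive edges of $C$ contain at most one two-sided edge; since $n\geq5$, the edges adjacent and next-adjacent to a two-sided edge are one-sided. Suppose $A_tA_{t+1}$ is two-sided. Part~(2) of Lemma~\ref{lem:f124-outer-p3-laws} at the centres $A_t$ and $A_{t+1}$ forces the arcs $A_t\to A_{t-1}$ and $A_{t+1}\to A_{t+2}$.

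\textbf{Step 2: the one-sided arcs alternate.} At a centre whose two incident edges are both one-sided, part~(3) forbids a directed path, so the vertex is a source or a sink. Combined with Step~1, each two-sided edge behaves like a contracted ``source'': both its endpoints emit outward arcs. Now contract every two-sided edge to a single vertex. The result is a closed walk of one-sided arcs in which every vertex is a source or a sink, so sources and sinks alternate. Its length $n-s$, where $s$ is the number of two-sided edges, is therefore even. If $s=0$ we are done, because $n$ is even, contradicting odd $n$. The hard case is therefore $s\geq1$ odd, more precisely $n-s$ even with $n$ odd.

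\textbf{Step 3: exclude two-sided edges on odd holes (main obstacle).} Here local laws do not suffice, and I would use the pure chords instead. Part~(2) gives that $A_{t-1}$--$A_{t+1}$ is complete across a two-sided edge. Using Lemma~\ref{lem:f124-anti-propagation} along the outward arcs, together with Lemma~\ref{lem:f124-boundary-refinement}, I would propagate completeness and anticompleteness signs of the chords $A_iA_j$ around the cycle. At a sink $B$ with in-arcs $A\to B\leftarrow C$, part~(3) together with part~(1) forces the chord $AC$ to be complete. The goal is to show that a two-sided edge $A_tA_{t+1}$ together with its flanking arcs forces a chord sign that contradicts part~(1) somewhere else on the hole. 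Failing that, I would aim to build an explicit induced $T$ in the style of \eqref{eq:outer-p4-witness}: take a crossing nonedge in $X_{A_t}$ synchronised via Lemma~\ref{lem:f124-boundary-refinement}, pick columns $00$ and singleton columns from neighbouring teeth, and use a vertex two steps along the cycle whose pure interfaces are known. I would try $n=5$ first, with the cycle $A_1A_2$ two-sided, $A_2\to A_3$, $A_1\to A_5$, and $A_4$ as a source or sink. In this case all five chords are pure and determined, and I expect the contradiction to come from the chords $A_3A_5$ and $A_2A_4$ being forced to opposite signs. For longer holes, I would hope the same local configuration, a two-sided edge with its two outward arcs and the two-step chords, yields the contradiction, so that $s=0$ and Step~2 finishes the proof.
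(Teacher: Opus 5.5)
Your Steps~1 and~2 are correct and match the paper's bookkeeping. On a hole of length at least five, a two-sided edge is isolated and flanked by outward arcs. Once no two-sided edge remains, every vertex is a source or a sink, so the length is even.

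The genuine gap is Step~3, which carries the whole content of the theorem and which you leave as a plan. Your main suggestion, a clash of chord signs, cannot succeed with the lemmas available. Take your $n=5$ picture: $A_1A_2$ two-sided, hence $A_1\to A_5$, $A_2\to A_3$, $A_4\to A_3$, $A_4\to A_5$. The laws force $A_1A_3$, $A_2A_5$, $A_2A_4$, $A_1A_4$ to be complete. They leave $A_3A_5$ free, since part~(1) of Lemma~\ref{lem:f124-outer-p3-laws} allows the anticomplete sign at the source $A_4$. So the chords are not all determined, and this sign and orientation data violates none of Lemmas~\ref{lem:f124-anti-propagation}--\ref{lem:f124-outer-p4-one-two-sided}. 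A contradiction therefore needs a fresh induced copy of $T$.

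The missing argument is an explicit witness on four consecutive hole vertices $D-E-A-B$ with $AB$ two-sided; in your labelling these are $A_4-A_5-A_1-A_2$.
\begin{itemize}
\item $DE$ is one-sided by the $P_4$ lemma applied to $D-E-A-B$.
\item Since $A\to E$ enters $E$, part~(3) of Lemma~\ref{lem:f124-outer-p3-laws} makes $E$ a sink. Hence $D\to E$, and $D$--$A$ is complete.
\item $D$--$B$ is complete. Otherwise anti-propagation across the nonconstant $B$-endpoint of $AB$ would make $D$--$A$ anticomplete.
\item $E$--$B$ is complete by part~(2).
\end{itemize}
Now choose the vertices.
\begin{itemize}
\item Choose a nonedge $xx'$ of $X_A$ crossing the $A\to E$ cut. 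By Lemma~\ref{lem:f124-boundary-refinement} it also crosses the $A$-cut toward $B$.
\item Since the $E$-endpoint function is constant, every vertex of $X_E$ has the same singleton column on $\{x,x'\}$, say adjacent to $x'$ only. Pick $e\in X_E$.
\item Pick $b\in X_B$ with the same column, which exists because the $B$-endpoint function is nonconstant.
\item Pick $d\in X_D$ anticomplete to $X_E$, which exists because the one-sided interface $D\to E$ is mixed.
\end{itemize}
Then $(a,b,c,d,e,f)\mapsto(d,h_A,b,x,x',e)$ is an induced $T$. The pairs $db,dx,dx',h_Ax,h_Ax',bx',be,x'e$ are edges, and $dh_A,de,h_Ab,h_Ae,bx,xx',xe$ are nonedges.

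This argument uses no parity, so it excludes two-sided edges from every hole of length at least five. Your contraction count in Step~2 is then needed only in the case $s=0$.
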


\begin{proof}
Suppose for a contradiction that $M$ has an odd hole.  Its length is at
least five.  First suppose this odd hole contains a two-sided edge $AB$, and
write five
consecutive cycle vertices as
\[
                         D-E-A-B-C.
\]
By Lemma~\ref{lem:f124-outer-p4-one-two-sided}, $EA,BC,DE$ are one-sided.
The two boundary laws orient $A\to E$ and $B\to C$.  At the induced
$P_3$ $D-E-A$, the edge $EA$ enters its centre; hence the other
one-sided edge is $D\to E$, and the pure $D$--$A$ interface is complete.
The $D$--$B$ interface is also complete: if it were anticomplete,
propagation from the $B$-endpoint, whose label function on $BA$ is
nonconstant, would make
$D$--$A$ anticomplete.

Choose a nonedge $xx'$ of $X_A$ crossing the $A\to E$ cut.  Boundary
refinement says that it also crosses the two-sided cut toward $B$.  Choose
$e\in X_E$ and, using the varying $B$ endpoint function, choose
$b\in X_B$ so that their singleton columns on $\{x,x'\}$ hit the same
row, say $x'$.  Since $D\to E$, choose $d\in X_D$ anticomplete to
$X_E$.  The map
\[
 (a,b,c,d,e,f)\longmapsto(d,h_A,b,x,x',e)
\]
has required edges
\[
 db,dx,dx',h_Ax,h_Ax',bx',be,x'e
\]
and required nonedges
\[
 dh_A,de,h_Ab,h_Ae,bx,xx',xe.
\]
Thus it induces $T$, a contradiction.  No edge of this odd hole is two-sided.

All its edges are therefore one-sided.  At every cycle vertex the two
arrows are either both inward or both outward: a directed path is
excluded by Lemma~\ref{lem:f124-outer-p3-laws}, and an anticomplete
distance-two interface forces the outward alternative.  Hence sources
and sinks alternate around the cycle.  The cycle length is even, contrary
to its choice.
\end{proof}

By Theorem~\ref{thm:C5-EH}, there is a constant $\tau>0$ such that
every $C_5$-free graph $Q$ has
\begin{equation}
 \max\{\alpha(Q),\omega(Q)\}\geq |Q|^\tau.
 \label{eq:C5-EH-use}
\end{equation}

\begin{corollary}[Polynomial all-mixed or all-pure set]
\label{cor:f124-outer-c5-extraction}
There is a constant $\tau>0$ such that every auxiliary graph $M$
on $m$ indices contains a set $J$ of at least $m^\tau$ indices for which
either every pair of interfaces is mixed or every pair is pure.
\end{corollary}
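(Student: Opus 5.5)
The plan is to apply Theorem~\ref{thm:C5-EH} directly to the auxiliary graph $M$, after checking that $M$ is $C_5$-free.

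First, $M$ is $C_5$-free. An induced $C_5$ in $M$ would be an induced hole of odd length five. Theorem~\ref{thm:f124-outer-no-odd-hole} says that every induced hole of $M$ has even length, so no such hole exists.

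Second, apply Theorem~\ref{thm:C5-EH} with $Q=M$. This gives the constant $\tau>0$ from \eqref{eq:C5-EH-use}, which depends only on $C_5$ and not on the comb or the host graph. It yields a clique or a stable set $J$ of $M$ with $|J|\geq m^\tau$.

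Third, translate back to interfaces using the definition of $M$: $ij\in E(M)$ exactly when the interface $X_i$--$X_j$ is mixed.
\begin{itemize}
\item If $J$ is a clique of $M$, every pair of interfaces within $J$ is mixed.
\item If $J$ is a stable set of $M$, every pair of interfaces within $J$ is pure.
\end{itemize}

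The only case that needs separate attention is small $m$, where $m^\tau$ may be below $2$. Theorem~\ref{thm:C5-EH} as stated covers every $C_5$-free graph, and a single vertex is trivially both a clique and a stable set, so no adjustment is required.

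There is no genuine obstacle: all the work is contained in Theorem~\ref{thm:f124-outer-no-odd-hole}. The one point to confirm is that its hypotheses are exactly the standing hypotheses of this section, namely anticonnected $X_i$ and additively separable interfaces as in \eqref{eq:outer-affine}. These are the same hypotheses under which $M$ is defined here.
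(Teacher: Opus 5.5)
Your proposal is correct and is essentially the paper's own proof. Theorem~\ref{thm:f124-outer-no-odd-hole} makes $M$ free of induced $C_5$, Theorem~\ref{thm:C5-EH} then supplies a clique or stable set of order at least $m^\tau$, and by the definition of $M$ a clique is an all-mixed index set and a stable set is an all-pure one.
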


\begin{proof}
Theorem~\ref{thm:f124-outer-no-odd-hole} says in particular that $M$ is
$C_5$-free.  Apply Theorem~\ref{thm:C5-EH}.  A clique of $M$ is an all-mixed index set, and
a stable set of $M$ is an all-pure index set, by the definition of $M$.
\end{proof}

\begin{theorem}[Extraction from pure and mixed interfaces]
\label{thm:f124-affine-interface-extraction}
There exist an integer $m_0$ and constants $\rho,\sigma>0$ such that the
following holds.  Let $I$ be a
set of $m\geq m_0$ handles, and let the anticonnected sets
$X_i\subseteq B_i$ have order at least $W$.  Suppose that every
interface has the form \eqref{eq:outer-affine}, and that $h_ih_j$ is a
nonedge whenever the $X_i$--$X_j$ interface is mixed.  Then one of the following
holds:
\begin{enumerate}
 \item there is a pure blockade of length at least $m^\rho$ and width
       at least $W$;
 \item there is a complete or anticomplete blockade of length at least
       $m^\sigma$ and width at least $W/m$.
\end{enumerate}
In either outcome, the blocks lie in pairwise distinct sets $X_i$; in
particular, the actual length of the blockade is at most $m$.
\end{theorem}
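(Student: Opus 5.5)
The plan is to combine Corollary~\ref{cor:f124-outer-c5-extraction} with Theorem~\ref{thm:f124-mixed-affine-extraction}.

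First, form the auxiliary graph $M$ on the index set $I$. Its edges are the mixed interfaces. Every interface has the form \eqref{eq:outer-affine}, so Theorem~\ref{thm:f124-outer-no-odd-hole} applies: every hole of $M$ is even, and in particular $M$ is $C_5$-free. Corollary~\ref{cor:f124-outer-c5-extraction} then gives a set $J\subseteq I$ with $|J|\geq m^\tau$ that is a clique or a stable set of $M$.

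\emph{Stable case.} Every interface $X_i$--$X_j$ with $i,j\in J$ is pure. Hence $(X_i:i\in J)$ is itself a pure blockade of length at least $m^\tau$ and width at least $W$. This is outcome~1 with $\rho=\tau$, and no shrinking of the blocks is needed.

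\emph{Clique case.} Every pair in $J$ has a mixed interface. By hypothesis the corresponding handles are pairwise nonadjacent. The sets $X_i$ are anticonnected and the interfaces are affine, so the standing hypotheses of Section~\ref{sec:all-mixed} hold for the index set $J$. Put $m'=|J|\geq m^\tau$.
\begin{itemize}
\item Choose $m_0$ so that $m_0^\tau\geq4$. Then $m'\geq4$, and Theorem~\ref{thm:f124-mixed-affine-extraction} gives a complete or anticomplete blockade of length at least $(m')^\delta\geq m^{\tau\delta}$ and width at least $W/m'$.
\item Since $m'\leq m$, the width is at least $W/m$. This is outcome~2 with $\sigma=\tau\delta$, where $\delta$ is as in \eqref{eq:mixed-extraction-delta}.
\end{itemize}

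\emph{Final clause.} The blocks produced in both cases are subsets of distinct sets $X_i$. In the stable case they are the $X_i$ themselves. In the clique case they are the fibres or cells from Corollary~\ref{cor:f124-full-cut-extraction} and Lemma~\ref{lem:f124-one-sided-interval}, each taken inside its own $X_i$. So the length of the blockade is at most $m$.

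The only point requiring care is checking that Theorem~\ref{thm:f124-mixed-affine-extraction} really applies to $J$. It needs every pairwise interface on $J$ to be mixed and affine, with pairwise nonadjacent handles. Mixedness is exactly the clique condition in $M$. Nonadjacency of the handles comes from the hypothesis tied to mixed interfaces. The remaining requirement is the threshold $m'\geq4$, which is met by the choice of $m_0$.
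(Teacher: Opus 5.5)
Your proposal is correct and matches the paper's proof. Like the paper, you choose $m_0$ with $m_0^\tau\geq4$ and apply Corollary~\ref{cor:f124-outer-c5-extraction}. In the stable case you take $\rho=\tau$. In the clique case you use the mixed-interface hypothesis to get pairwise nonadjacent handles, then apply Theorem~\ref{thm:f124-mixed-affine-extraction} with $\sigma=\tau\delta$ and width $W/|J|\geq W/m$. The paper leaves the final clause (blocks in distinct $X_i$) implicit, so your explicit check of it is a small but harmless addition.
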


\begin{proof}
Choose $m_0$ so that $m_0^\tau\geq4$.
Apply Corollary~\ref{cor:f124-outer-c5-extraction}, and put $s=|J|$.
If $J$ is stable in $M$, the blocks $(X_i:i\in J)$ are pairwise pure,
so item~1 holds with $\rho=\tau$.

If $J$ is a clique in $M$, every pair on $J$ is mixed.  The handles indexed by $J$ are pairwise nonadjacent, by the hypothesis
on mixed interfaces.  Apply
Theorem~\ref{thm:f124-mixed-affine-extraction}.  For its fixed constant
$\delta>0$, it gives a complete or anticomplete blockade of length at
least
\[
                         s^\delta\geq m^{\tau\delta}
\]
and width at least $W/s\geq W/m$.  Thus item~2 holds with
$\sigma=\tau\delta$.
\end{proof}

\section{Multipartite aggregation for clique handles}
\label{sec:multipartite}

\begin{lemma}[Multipartite aggregation]
\label{lem:f124-no-two-edge-aggregation}
Let $m\geq3$, let $V_1,\ldots,V_m$ be disjoint sets of the same integer
order $W$, and let $R$ be a graph on their union with no edges inside a
part.  Suppose that
\[
 xy,yz\in E(R)\quad\Longrightarrow\quad xz\in E(R)
 \tag{A}\label{eq:multipartite-transitivity}
\]
whenever $x,y,z$ belong to three different parts.  Define $Q$ on the
same vertex set by giving it no edges inside a part and making a
cross-part pair adjacent in $Q$ exactly when it is nonadjacent in $R$.

There is a pure pair in $Q$, with its two sides in different parts and
both sides of order at least $W/(4m)$.  If $m\geq144$, there is moreover
a complete or anticomplete blockade in $Q$ of length at least
$\sqrt m/2$ and width at least $W/(2m)$.
\end{lemma}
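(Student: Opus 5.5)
The plan is to first turn hypothesis \eqref{eq:multipartite-transitivity} into a description of the connected components of $R$, and then read off pure pairs and blockades from that description. A cross-part pair is adjacent in $Q$ exactly when it is nonadjacent in $R$. So a pair of sets in different parts is complete in $Q$ iff it is anticomplete in $R$, and vice versa. It therefore suffices to find the required pure objects in $R$.

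\emph{Step 1 (component structure).} Let $C$ be a component of $R$. If $C$ meets only one part, it is a single vertex, because $R$ has no edges inside a part. If $C$ meets exactly two parts, $R[C]$ is an arbitrary bipartite graph between them. The claim is that if $C$ meets at least three parts, then every cross-part pair in $C$ is an edge of $R$. The route is via neighbourhood inclusion. If $xy\in E(R)$ with $x\in V_j$ and $y\in V_k$, then \eqref{eq:multipartite-transitivity} gives
\[
N(y)\setminus(V_j\cup V_k)\subseteq N(x).
\]
Now take a shortest path in $C$ between two nonadjacent vertices in different parts. Whenever three consecutive vertices of the path lie in three distinct parts, \eqref{eq:multipartite-transitivity} shortcuts the path. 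Otherwise consecutive vertices alternate between two parts. In that case use a vertex of $C$ in a third part, together with the inclusion above, to create a shortcut. Making this case analysis clean is routine but fiddly. The upshot is that each component is either a singleton, a bipartite component on two parts, or a complete multipartite component spanning at least three parts.

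\emph{Step 2 (a pure pair of order $W/(4m)$).} Fix $V_1$ and split it into three kinds of vertices: isolated vertices; sets $S_k$ lying in two-part components whose other part is $V_k$; and the pieces $C\cap V_1$ of complete multipartite components $C$.

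First suppose some multipartite $C$ satisfies $|C\cap V_1|\geq W/(4m)$ and $|C\cap V_j|\geq W/(4m)$ for some $j\neq1$. These two pieces are complete in $R$, and we are done.

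Otherwise, define for each $j\neq1$ a bad set in $V_1$. It is $S_j$ together with every multipartite piece $C\cap V_1$ for which $C$ meets $V_j$. By construction the complement of the bad set is anticomplete in $R$ to all of $V_j$, so we want some $j$ whose bad set has order at most $W/2$. The $S_j$ are disjoint, so averaging over the $m-1\geq2$ choices of $j$ controls them. The multipartite pieces are the difficulty. A large piece $C\cap V_1$ counts against every $j$ with $C\cap V_j\neq\emptyset$, yet all those intersections are small by the case assumption. To handle this, I would switch the roles of the two parts, taking $V_j$ as the big side and $C\cap V_1$ as the small side, or I would use the $m$ pieces of a single large component. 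I expect this accounting to be the main obstacle. The factor $4m$ in the statement suggests the intended argument is this kind of pigeonholing over $j$ with a halving step.

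\emph{Step 3 (the blockade for $m\geq144$).} I would apply Step 2 iteratively, or apply the structure from Step 1 simultaneously to all parts. In each part choose a piece of order at least $W/(2m)$ that sits uniformly with respect to the component classes, so that the chosen pieces are pairwise pure. By Step 1, the pattern on the $m$ parts records, for each pair, whether the two chosen pieces lie in a common multipartite component. That pattern is a disjoint union of cliques, that is, it is $P_3$-free. A disjoint union of cliques on $m$ vertices has a clique or a stable set of order at least $\sqrt m$. Halving once more to make the two-part bipartite components pure gives length at least $\sqrt m/2$. The condition $m\geq144$ is what ensures $\sqrt m/2\geq6$, and that these losses leave width at least $W/(2m)$. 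Passing back to $Q$ swaps complete and anticomplete, which gives the stated blockade.
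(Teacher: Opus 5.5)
Your Step 1 is correct and is essentially the paper's shortest-path argument. Two further parts of your plan also match the paper. One is pairing a large slice $C\cap V_1$ against $V_j\setminus C$ when the other slices of $C$ are small. The other, in Step 3, is reading off either one component spanning many parts or many distinct components when each chosen piece is a single-component slice.

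\textbf{The gap.} Neither Step 2 nor Step 3 handles the regime in which a part's vertices are spread over many \emph{small} components. Let $R$ consist of $W$ disjoint copies of $K_m$, each with one vertex in every part, and take $W>4m$. Hypothesis (A) holds.
\begin{itemize}
\item In Step 2, every one of your bad sets equals $V_1$, since every component meets every $V_j$. Every multipartite piece has order $1<W/(4m)$, and there is no large component, so none of your fallback options applies.
\item In Step 3, no part contains a piece of order $W/(2m)$ inside one component. A piece built as a union of slices from several copies is mixed against any piece in another part that meets one of those copies. So your ``disjoint union of cliques'' pattern gives nothing here.
\end{itemize}
In this regime, pure pieces must be unions of slices taken over \emph{disjoint families of components}. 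Such unions are anticomplete in $R$ because distinct components are.

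\textbf{How the paper supplies this.} Put $s=W/(2m)$.
\begin{itemize}
\item \emph{The pure pair, some slice has order at least $s$.} Either another slice of order at least $s$ of the same component exists and that component meets at least three parts, in which case the two slices are complete in $R$. Otherwise use the complement of that component in a suitable other part: one wholly outside it when the component meets only two parts, or any other part when all its other slices are small, where the complement has order greater than $W-s$.
\item \emph{The pure pair, every slice has order less than $s\leq W/6$.} Put each component independently into one of two classes. Take the class-zero slices in $V_1$ and the class-one slices in $V_2$. Cantelli's inequality gives both orders at least $W/12\geq W/(4m)$.
\item \emph{The blockade, at least $m/2$ parts heavy.} A part is heavy if it contains a slice of order at least $s$. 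Argue essentially as in your Step 3: either one selected component occurs in at least $\lceil\sqrt m\rceil$ parts, or more than $\sqrt m/2$ distinct components occur.
\item \emph{The blockade, more than $m/2$ parts light.} Give each component a uniform label in $[m]$, and in a light part $V_i$ keep the slices labelled $i$. Lightness gives $\operatorname{Var}(X_i)<\frac12(\mathbb E X_i)^2$, so Paley--Zygmund gives $\Pr(X_i\geq s)\geq 1/6$. Hence some labelling yields at least $m/12$ blocks that are pairwise complete in $Q$.
\end{itemize}
This last step is where $m\geq144$ is used, namely to get $m/12\geq\sqrt m$; it is not there to make $\sqrt m/2\geq 6$.
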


\begin{proof}
First record the structure of a component $D$ of $R$.  Different
components are complete to one another in $Q$ on every cross-part pair.
If $D$ meets at least three parts, each pair of slices of $D$ in
different parts is anticomplete in $Q$.

For the second assertion, take $x,y\in D$ in different parts and a
shortest $R$-path $P=p_0\cdots p_t$ between them.  Consecutive vertices
of $P$ lie in different parts, since $R$ has no edges inside a part.  If
$p_{q-1},p_q,p_{q+1}$ lay in three different parts,
\eqref{eq:multipartite-transitivity} would give the
shortcut $p_{q-1}p_{q+1}$.  Hence $p_{q-1}$ and $p_{q+1}$ lie in the
same part for every $q$, so the part sequence of $P$ alternates.  Since
its ends lie in different parts, those two alternating parts are exactly
the parts of $x$ and $y$.

Since $D$ meets a third part, take a shortest path from a vertex outside
those two parts to $P$, with its last vertex on $P$, and let $z$ be the
last vertex of this path outside the two parts.  The suffix after $z$
alternates between the two parts.  If its vertices are
$z=u_0,u_1,\ldots,u_r\in V(P)$, induction using
\eqref{eq:multipartite-transitivity} gives
$zu_j\in E(R)$ for every $j$: the base case is $zu_1$, and the three
vertices $z,u_j,u_{j+1}$ lie in distinct parts.  Starting at $u_r$ and
applying the same induction in both directions along $P$ shows that $z$
is adjacent to every vertex of $P$, in particular to $x$ and $y$.  The
three vertices $x,z,y$ lie in distinct parts, so one final application
of \eqref{eq:multipartite-transitivity} gives $xy\in E(R)$.  Thus every cross-part pair in $D$ is an
edge of $R$, proving the asserted sign in $Q$.

Put $s=W/(2m)$ and call a part heavy if one of its component slices has
order at least $s$.  Suppose first that at least $m/2$ parts are heavy,
and select one such slice in every heavy part.  Put
$t=\lceil\sqrt m\rceil$.  If one selected component occurs in at least
$t$ parts, then, for $m\geq9$, its slices give an anticomplete
$t$-blockade in $Q$.  Otherwise more than $\sqrt m/2$ distinct
components occur, because $t-1<\sqrt m$ and their number is at least
\[
             \frac{m}{2(t-1)}>\frac{\sqrt m}{2}.
\]
One selected slice from each gives a complete blockade in $Q$.

Suppose instead that more than $m/2$ parts are light.  Give each
component of $R$, independently and uniformly, one of the labels
$1,\ldots,m$.  In a light part $V_i$, let $X_i$ be the total size of the
slices whose component received label $i$.  Then
\[
 \mathbb E X_i=W/m=2s,
 \qquad
 \operatorname{Var}(X_i)
 \leq \frac{1}{m}\sum_D|D\cap V_i|^2
 <\frac{sW}{m}=\frac{1}{2}(\mathbb E X_i)^2.
\]
Paley--Zygmund therefore gives
\[
 \Pr(X_i\geq s)\geq
 \frac{1}{4}\frac{(\mathbb E X_i)^2}{\mathbb E X_i^2}\geq\frac{1}{6}.
\]
Some labelling makes at least $m/12$ light parts good.  In a good part,
take the union counted by $X_i$.  Blocks belonging to different labels
use different components and hence are pairwise complete in $Q$.  For
$m\geq144$, $m/12\geq\sqrt m$, proving the long conclusion in every
case.

It remains to obtain a pair for arbitrary $m\geq3$.  Suppose first that
some component $D$ has a slice of order at least $s$.  If $D$ meets at
least three parts and another slice has order at least $s$, those two
slices are anticomplete in $Q$.  If all its other slices have order less
than $s$, the complement of $D$ in any other part has order greater than
$W-s$ and is complete in $Q$ to the first slice.  If $D$ meets at most
two parts, a third part is wholly outside $D$ and is complete in $Q$ to
the first slice.  This gives a pure pair of width at least $s$.

Finally suppose every component slice has order less than
$s\leq W/6$.  Fix two parts and independently put every component into
one of two classes with equal probabilities.  In the first part take the
union of class-zero slices, and in the second take the union of class-one
slices.  Each order has mean $W/2$ and variance at most $W^2/24$.
Cantelli's inequality bounds, for each specified order, the probability
that it is below $W/12$ by $6/31$; the union bound is less than one.  Some
assignment therefore makes both orders at least
$W/12\geq W/(4m)$.  They use disjoint component classes and hence are
complete to one another in $Q$.
\end{proof}

In the clique-handle application, $R$ consists of the actual cross-tooth
edges.  Thus a complete blockade in $Q$ is anticomplete in the host and
an anticomplete blockade in $Q$ is complete in the host; either output is
the required complete or anticomplete blockade.

\section{Quantitative reductions}
\label{sec:hjz-quantitative}

Following the five-stage strategy of Huang--Ju--Zhou, we prove a
quantitative extension of their implication from property $(*)$ to
generalized niceness
\cite[Lemma~1.13 and Section~4.2]{HuangJuZhou2026}.  The pure outcome
below retains only a fixed positive power of the number of teeth, so the
exponents must be adjusted; we give the complete adapted argument.
Some differences below may reflect conventions or intended
normalisations that we have overlooked.  We therefore state only the
forms needed for $F$ and prove those variants directly.

\begin{definition}[Quantitative comb condition]
\label{def:comb-hypothesis}
A finite graph family $\mathcal F$ satisfies the
\emph{quantitative comb condition} if there are constants
\[
                       a,b,g,p,q>0
\]
such that every $(\ell,w)$-comb in every
$\overline{\mathcal F}$-free graph $G$, with $\ell,w\geq4$, equipped with
a vertex $r$ satisfying \eqref{eq:root-outside}, has one of the
following outcomes:
\begin{enumerate}
\item[(H)] $\operatorname{hom}(G)\geq w^a$;
\item[(U)] a complete or anticomplete $(k,w/k^b)$-blockade for some
           $k\geq\ell^g$;
\item[(P)] a pure $(k,w/\ell^q)$-blockade for some
           $\ell^p\leq k\leq\ell$.
\end{enumerate}
After replacing $p$ by $\min\{p,1\}$, we may and shall assume that
$0<p\leq1$.
\end{definition}

We first state and prove the leaf-expansion form used in the
reduction.  For graphs $Q,L$, let $\operatorname{ind}_Q(L)$ denote the
number of induced copies of $Q$ in $L$, counted as injections.  The
reduction needs the first outcome relative to the order of the host
graph, as in the later applications of
\cite[Lemma~2.7]{HuangJuZhou2026}.  An implicit normalisation may have
been intended in the displayed statement.  We record and prove the
unordered finite-family version needed here; compare also the related
ordered-graph estimate for sparse pairs in
\cite[Lemma~5.1]{NguyenScottSeymourDensityIV}.

\begin{lemma}[Relative leaf expansion]
\label{lem:hjz-leaf-expansion}
If $\mathcal F$ is leaf-reducible, then there are constants $d>0$ and
$h\geq1$ with the following property.  Let $0<y<1$ and $s>1$, and let
$J$ be an $\mathcal F$-free graph of maximum degree at most $y|J|$.
Then at least one of the following holds:
\begin{enumerate}
\item there are disjoint $X,Y\subseteq V(J)$ such that
\[
 |X|\geq y^{sd+1}|J|,\qquad
 |Y|\geq(1-hy)|J|,
\]
and $Y$ is anticomplete to $X$;
\item $J$ has a $y^s$-restricted induced subgraph of order at least
      $y^{sd+1}|J|$.
\end{enumerate}
\end{lemma}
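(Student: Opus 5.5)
The plan is to use leaf-reducibility to produce many induced copies of the leaf-deleted graph and to exploit sparsity to build a copy of the full member of $\mathcal F$. Let $H\in\mathcal F$ have a leaf $v$ with neighbour $u$, and put $\mathcal F'=(\mathcal F\setminus\{H\})\cup\{H-v\}$. Since $\mathcal F'$ is Erd\H{o}s--Hajnal, the Buci\'c--Fox--Pham equivalence (in the finite-family form of \cite[Theorem~1.3]{HuangJuZhou2026}) makes $\mathcal F'$ viral. Concretely, there is $d_0>0$ such that, for every $x\in(0,1/2)$, every graph $L$ either has an $x$-restricted induced subgraph of order at least $x^{d_0}|L|$, or contains at least $x^{d_0}|L|^{|Q|}$ induced copies of some $Q\in\mathcal F'$. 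We set $x=y^s$, take $d\geq d_0$, and apply this dichotomy to $L=J$. If the first alternative holds, outcome~2 follows because $y^{sd_0}|J|\geq y^{sd+1}|J|$ after enlarging $d$. Any copy of a member of $\mathcal F\setminus\{H\}$ is impossible since $J$ is $\mathcal F$-free. Hence we may assume that $J$ contains at least $y^{sd_0}|J|^{t}$ induced copies of $Q_0=H-v$, where $t=|H|-1$.

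We now attempt to extend each copy of $Q_0$ by a new vertex adjacent to the image of $u$ only. Let $\phi$ be a copy of $Q_0$ and put $U=\phi(V(Q_0))$. Extension fails exactly when every neighbour of $\phi(u)$ outside $U$ has a neighbour among the other $t-1$ vertices of $U$. We use the degree bound to control this. Consider the set $S=\bigcup_{w\in U\setminus\{\phi(u)\}}N(w)$, which has size at most $(t-1)y|J|$. If $\phi(u)$ has a neighbour outside $S\cup U$, we obtain an induced $H$, contradicting $\mathcal F$-freeness. Therefore, for every copy, $N(\phi(u))\subseteq S\cup U$.

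The averaging step converts this into the anticomplete pair. By pigeonholing over the $y^{sd_0}|J|^t$ copies, we fix the images of $V(Q_0)\setminus\{u\}$; call the resulting $(t-1)$-set $U'$. Let $X$ be the set of vertices $z$ for which $U'\cup\{z\}$ realises $Q_0$ with $z$ in the role of $u$. Counting gives $|X|\geq y^{sd_0}|J|$, up to a factor of $t!$ that is absorbed by the exponent $+1$ when $d$ is large and $y$ is small. For large $y$, i.e.\ $y\geq 1/h$, outcome~1 holds trivially with $Y=\emptyset$, since $1-hy\leq0$; this is where $h\geq t$ is chosen. Every $z\in X$ then has $N(z)\subseteq S'\cup U'$, where $S'=\bigcup_{w\in U'}N(w)$ satisfies $|S'|\leq (t-1)y|J|$. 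Setting $Y=V(J)\setminus(X\cup S'\cup U')$ makes $Y$ anticomplete to $X$, and $|Y|\geq |J|-|X|-(t-1)y|J|-t$. To obtain $|Y|\geq(1-hy)|J|$, we shrink $X$ to size exactly $\lceil y^{sd+1}|J|\rceil\leq y|J|$, which is legitimate because anticompleteness passes to subsets. The additive constant $t$ is handled either by $h$ (when $y|J|\geq t$) or by noting that if $y|J|<1$ then $J$ has maximum degree $0$, so $J$ is itself $y^s$-restricted and outcome~2 holds.

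The main obstacle is the first step, namely the correct finite-family viral statement with exponent linear in $s$ after substituting $x=y^s$. I would cite it from \cite{BucicFoxPham2024,HuangJuZhou2026}. Some care is also needed that the viral exponent applies to graphs containing members of $\mathcal F\setminus\{H\}$ rather than only to $\mathcal F'$-free graphs; this is resolved because $J$ itself is $\mathcal F$-free.
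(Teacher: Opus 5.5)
Your proof is correct, but it applies the viral dichotomy in a different place from the paper.

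\textbf{The paper's route.} The paper fixes an arbitrary set $S$ of exactly $\lceil y|J|\rceil$ vertices and applies the finite-family viral property to $J[S]$, not to $J$. The pigeonhole over embeddings of $H-\{u,v\}$ and the leaf-extension contradiction that follow are the same as yours. Working inside $S$ makes two things automatic:
\begin{enumerate}
\item $X\subseteq S$ has order at most about $y|J|$;
\item $Y$ can be taken among the vertices outside $S$ with no neighbour in the fixed $(t-1)$-set, so the new leaf is automatically distinct from the copy, and $|Y|\geq(1-|H|y)|J|$ follows at once.
\end{enumerate}
The cost is the factor $|S|\geq y|J|$, which is exactly where the $+1$ in the exponent $sd+1$ comes from. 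The paper also needs a separate degenerate case $y^{sd+1}|J|\le 1$.

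\textbf{Your route.} You apply the viral property to all of $J$, which gives the slightly better exponent $sd_0$. You then recover those two features by hand: you delete $X\cup U'$ from $Y$ and truncate $X$ to $\lceil y^{sd+1}|J|\rceil$ vertices. Both steps are legitimate, since anticompleteness passes to subsets of $X$.

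\textbf{Bookkeeping to tidy.}
\begin{enumerate}
\item If you count copies as injections and pigeonhole over the embedding of $H-\{u,v\}$ rather than its image set, there is no factorial loss, because each extension is determined by the image of $u$.
\item You need not split $1\le y|J|<t$ from $y|J|\ge t$. The additive loss is at most $t$ vertices ($t-1$ from $U'$ and $1$ from the ceiling in the truncation). Once $y|J|\geq 1$, this is at most $ty|J|$, so $|Y|\geq(1-2ty)|J|$. Hence $h=2t$ works uniformly, and it also covers the trivial range $y\geq 1/h$.
\end{enumerate}
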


\begin{proof}
Choose $H\in\mathcal F$ and a leaf $v\in V(H)$ witnessing
leaf-reducibility, and put
\[
 \mathcal F_0=(\mathcal F\setminus\{H\})\cup\{H-v\}.
\]
Write $m=|H|$, and let $u$ be the unique neighbour of $v$ in $H$.
If $m=2$, take $d=h=1$.  Every $\mathcal F$-free graph is then
$K_2$-free and hence edgeless, so the whole graph gives outcome~(2)
for every $0<y<1$ and $s>1$.  We may therefore assume that $m\geq3$.

By leaf-reducibility, $\mathcal F_0$ has the Erd\H{o}s--Hajnal
property.  The standard finite-family extension of the
Erd\H{o}s--Hajnal/viral equivalence
\cite[Theorem~4]{BucicFoxPham2024}, also stated in
\cite[Theorem~1.3]{HuangJuZhou2026}, therefore gives, after increasing the exponent
if necessary to absorb the injection-count convention and the
maximum-degree formulation of restrictedness, a constant $r\geq1$
with the following property.  If $0<\varepsilon<1/2$ and a graph $L$
satisfies
\begin{equation}
 \operatorname{ind}_Q(L)<
       (\varepsilon^r|L|)^{|Q|}
       \qquad\text{for every }Q\in\mathcal F_0,
\label{eq:leaf-viral-form}
\end{equation}
then $L$ has an $\varepsilon$-restricted induced subgraph of order at
least $\varepsilon^r|L|$.  Put
\[
                   r_0=r+1,\qquad d=r_0(m-1)+2,\qquad h=2m.
\]

Let $0<y<1$, let $s>1$, and let $J$ satisfy the hypotheses, and put
$n=|J|$.  If $y\geq1/(2m)$, take
$X=V(J)$ and $Y=\varnothing$.  Then
\[
 |X|=n\geq y^{sd+1}n,\qquad |Y|=0\geq(1-hy)n,
\]
so outcome~(1) holds.  We may consequently assume that
$0<y<1/(2m)$.

Put
\[
                              A=sd+1.
\]
Thus $A>2$.  If $yn<1$, then the integer $\Delta(J)$ is zero, so again
$J$ itself gives outcome~(2).  Suppose that $yn\geq1$.  If
$y^An\leq1$, choose a vertex $z\in V(J)$ and put
\[
 X=\{z\},\qquad
 Y=V(J)\setminus(N_J(z)\cup\{z\}).
\]
The sets $X,Y$ are anticomplete, and
\[
 |X|=1\geq y^An,\qquad
 |Y|\geq n-1-yn\geq(1-my)n\geq(1-hy)n.
\]
Here the penultimate inequality follows from $yn\geq1$ and $m\geq3$.
Thus outcome~(1) holds.  We may assume from now on that
\begin{equation}
                              y^An>1.
\label{eq:leaf-nontrivial}
\end{equation}

Choose $S\subseteq V(J)$ with $|S|=\lceil yn\rceil$, and put
$H'=H-v$.  Suppose first that
\begin{equation}
 \operatorname{ind}_{H'}(J[S])
       \leq y^{A-2}|S|^{m-1}.
\label{eq:leaf-few-copies}
\end{equation}
Since
\[
 A-2=sd-1>s r(m-1),
\]
and $0<y<1$, \eqref{eq:leaf-few-copies} implies
\[
 \operatorname{ind}_{H'}(J[S])
       <((y^s)^r|S|)^{m-1}.
\]
Every member of $\mathcal F_0$ other than $H'$ belongs to
$\mathcal F\setminus\{H\}$ and hence has no induced copy in $J[S]$.
Moreover $y^s<1/2$.  Applying \eqref{eq:leaf-viral-form} with
$\varepsilon=y^s$, we obtain a $y^s$-restricted induced subgraph $R$
of $J[S]$ such that
\[
 |R|\geq y^{sr}|S|
      \geq y^{sr+1}n
      \geq y^{sd+1}n.
\]
This is outcome~(2).

It remains to consider the case
\begin{equation}
 \operatorname{ind}_{H'}(J[S])
       >y^{A-2}|S|^{m-1}.
\label{eq:leaf-many-copies}
\end{equation}
Let $K=H-\{u,v\}$.  For each induced embedding $\phi$ of $K$ in
$J[S]$, let $I_\phi$ be the set of induced embeddings of $H'$ in
$J[S]$ whose restriction to $K$ is $\phi$.  There are at most
$|S|^{m-2}$ choices for $\phi$.  If $|I_\phi|<y^An$ for every $\phi$,
then, since $n\leq|S|/y$,
\begin{align*}
 \operatorname{ind}_{H'}(J[S])
   &=\sum_\phi |I_\phi|\\
   &<|S|^{m-2}y^An\\
   &\leq y^{A-1}|S|^{m-1}
    <y^{A-2}|S|^{m-1},
\end{align*}
contrary to \eqref{eq:leaf-many-copies}.  Hence there is an embedding
$\phi$ for which $|I_\phi|\geq y^An$.

Put $P=\phi(V(K))$, and let
\[
                    X=\{\psi(u):\psi\in I_\phi\}.
\]
For fixed $\phi$, an extension $\psi$ is determined by $\psi(u)$, and
therefore $|X|=|I_\phi|\geq y^An$.  Let
\[
 Y=\{z\in V(J)\setminus S:N_J(z)\cap P=\varnothing\}.
\]
Since $|P|=m-2$, $\Delta(J)\leq yn$, and
$|S|=\lceil yn\rceil$, we have
\[
 |Y|\geq n-\lceil yn\rceil-(m-2)yn
     \geq(1-(m-1)y)n-1
     \geq(1-my)n
     \geq(1-hy)n.
\]
The penultimate inequality follows from
\eqref{eq:leaf-nontrivial}, which implies $yn>1$.

Finally, $X$ is anticomplete to $Y$.  Indeed, if $x_0\in X$ and
$y_0\in Y$ were adjacent, choose $\psi\in I_\phi$ with
$\psi(u)=x_0$.  The map that agrees with $\psi$ on $H'$ and sends $v$
to $y_0$ would be an induced embedding of $H$ in $J$: the vertex
$y_0$ is adjacent to $x_0=\psi(u)$ and has no neighbours in
$P=\psi(V(H')\setminus\{u\})$.  This contradicts that $J$ is
$\mathcal F$-free.  Thus
\[
 |X|\geq y^{sd+1}n,\qquad |Y|\geq(1-hy)n,
\]
and outcome~(1) holds.
\end{proof}

\begin{lemma}[Sparse-graph comb lemma
{\cite[Lemma~2.10]{HuangJuZhou2026}}]
\label{lem:hjz-sparse-comb}
Let $0<x\leq y\leq2^{-8}$, and let $J$ be a graph of maximum degree at
most $y^3|J|$, where $|J|\geq y^{-4}$.  At least one of the following
holds:
\begin{enumerate}
\item there are disjoint $X,Y\subseteq V(J)$ such that
\[
 |X|\geq y^4|J|,\qquad |Y|\geq(1-4y)|J|,
\]
and $Y$ is $x$-sparse to $X$;
\item $J$ is $2y^4$-restricted;
\item for some integer $\ell\in[y^{-1},x^{-2}]$, the graph $J$
contains an
\[
               \left(\ell,\frac{y^4|J|}{\ell^2}\right)\text{-comb}
\]
equipped with a vertex satisfying \eqref{eq:root-outside}.
\end{enumerate}
\end{lemma}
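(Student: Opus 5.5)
The plan is a root-first greedy construction: pick a high-degree vertex $r$ as the root, let the teeth live inside $N(r)$, and use the handles as vertices outside $N(r)\cup\{r\}$ with many neighbours in $N(r)$. Write $n=|J|$.

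\emph{Choosing the root.} If every vertex has degree at most $2y^4n$, then $J$ itself is $2y^4$-restricted, which is outcome~(2). Otherwise fix a vertex $r$ of degree greater than $2y^4n$ and put $N=N_J(r)$. Thus
\[
 2y^4n<|N|\leq y^3n .
\]
Let $Z=V(J)\setminus(N\cup\{r\})$, so $|Z|\geq(1-2y^3)n$. Every vertex of $Z$ is nonadjacent to $r$, and every vertex of $N$ is adjacent to $r$. Hence any comb whose handles lie in $Z$ and whose teeth lie in $N$ is automatically equipped with $r$ in the sense of \eqref{eq:root-outside}.

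\emph{Sparse-pair alternative.} Call $z\in Z$ \emph{heavy} if it has at least $x|N|$ neighbours in $N$, and let $Y$ be the set of non-heavy vertices of $Z$. Take $X=N$. If $|Y|\geq(1-4y)n$, then $Y$ is $x$-sparse to $X$ by definition and $|X|\geq y^4n$, so outcome~(1) holds. Otherwise the heavy set $Z_h$ satisfies $|Z_h|\geq(4y-2y^3)n\geq 3yn$.

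\emph{Building the comb from heavy vertices.} Each $v\in N$ has at most $y^3n$ neighbours, so $v$ is adjacent to at most a $y^3n/|Z_h|\leq y^2/3$ fraction of $Z_h$. Choose $\ell$ distinct handles $h_1,\dots,h_\ell$ uniformly at random from $Z_h$, and set
\[
 B_i=\bigl(N(h_i)\cap N\bigr)\setminus\bigcup_{j\neq i}N(h_j).
\]
A fixed vertex of $N(h_i)\cap N$ is adjacent to some other handle with probability at most about $\ell y^2/3$. When $\ell y^2\leq 1$, the expected size of $B_i$ is therefore at least about $\tfrac23 x|N|$. Averaging, and then discarding handles whose tooth is too small (starting with, say, $2\ell$ handles so that $\ell$ survive), gives a comb in which $h_i$ is complete to $B_i$ and anticomplete to $B_j$ for $j\neq i$. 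The teeth are disjoint by construction. Their widths are at least about $\tfrac12x|N|\geq xy^4n$.

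\emph{Matching the parameters.} This last step is the main obstacle. Outcome~(3) needs width at least $y^4n/\ell^2$, which holds once $\ell\geq x^{-1/2}$. The collision estimate needs $\ell\lesssim y^{-2}$, and the statement requires $\ell\in[y^{-1},x^{-2}]$. So I take $\ell=\lceil\max\{y^{-1},x^{-1/2}\}\rceil$, which lies in $[y^{-1},x^{-2}]$ because $x\leq y$.

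This choice works directly when $x\geq y^4$. When $x$ is much smaller, the collision bound fails for such large $\ell$. Here I would first try a sharper count: require only that the handles be pairwise far apart in the bipartite graph $Z_h$--$N$. This can be done by greedy selection, deleting at each step all heavy vertices sharing more than a small fraction of neighbours with the chosen handle. The cost of each deletion is controlled using the degree bound $y^3n$ together with $n\geq y^{-4}$.

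If greedy selection still falls short, I would instead pass to a sub-neighbourhood $N'\subseteq N$ where heaviness is measured relative to $|N'|$, and iterate the dichotomy. The iteration continues until either the sparse pair of outcome~(1) appears with the required $(1-4y)n$ bound, or $\ell$ handles with private neighbourhoods of relative size about $\ell^{-2}$ are found.
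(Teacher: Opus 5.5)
The paper does not prove this lemma. It is quoted from \cite[Lemma~2.10]{HuangJuZhou2026}, so your proposal has to stand on its own. As written it has a genuine gap: you locate the obstacle correctly, but you resolve it only in an easy range of parameters.

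A heavy vertex is guaranteed only $x|N|$ neighbours in $N$, and $|N|$ may be barely above $2y^4n$. So the width $y^4n/\ell^2$ forces $\ell\gtrsim x^{-1/2}$. Your collision estimate (each vertex of $N$ meets at most a $y^2/3$ fraction of $Z_h$) allows only $\ell\lesssim y^{-2}$. Hence the random private-neighbourhood construction can work only for $x\gtrsim y^4$. For $x\geq y^2/3$, outcome~(1) already holds for any $X$ of order $\lceil y^4n\rceil$, by counting edges leaving $X$.

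The regime $x\ll y^4$ is the one the paper actually needs: in Lemma~\ref{lem:hjz-modified-iterative}, $x$ is fixed while $y$ ranges over $[x,\zeta^3]$. For that case you offer only two unexecuted ideas. Even for $x\gtrsim y^4$ the averaging is not checked. With $2\ell\approx2y^{-2}$ handles, your union bound gives only $\mathbb E|B_i|\geq\frac13|N(h_i)\cap N|$. That does not produce $\ell$ teeth of order $\frac12x|N|$.

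The fallback ideas cannot work as stated. They rely on the dichotomy ``either most vertices are $x$-sparse to $N=N(r)$, or the heavy vertices carry a comb rooted at $r$'', and for small $x$ this dichotomy is false. Here is a configuration showing it.
\begin{itemize}
\item Take $|N|\approx2y^4n$, let $|Z_h|$ be slightly above $4yn$, and put $\beta\approx y^2/4$.
\item Every heavy vertex sends all of its roughly $x|N|$ edges into a set $N^*\subseteq N$ of order $x|N|/\beta$. The bipartite graph between $N^*$ and $Z_h$ is random of density $\beta$, which respects the degree bound.
\item No other vertex outside $N\cup\{r\}$ has a neighbour in $N$.
\end{itemize}
Every comb rooted at $r$ then has its handles in $Z_h$ and its teeth in $N^*$, so disjointness of the teeth forces $\ell\gtrsim\beta/x$. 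With that many handles, each private neighbourhood has order about $x|N|e^{-\beta\ell}\leq x|N|e^{-\Omega(y^4/x)}$. This falls below $y^4n/\ell^2\geq x^4y^4n$ once $x\leq c\,y^4/\log(1/x)$. Outcome~(2), and outcome~(1) with $X=N$, fail as well.

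So the lemma can hold there only through a different $X$ (here $N\setminus N^*$) or a different root. For example, if $N^*$ is stable, a vertex of $N^*$ roots a comb of length about $1/\beta$ with teeth in $Z_h$. Greedy selection of far-apart handles at the fixed root $r$ finds neither. Shrinking to some $N'\subseteq N$ also fails: a set of less than half the order of $N$ may be smaller than $y^4n$, so it cannot serve as $X$. An iterated dichotomy would in addition have to keep $|Y|\geq(1-4y)n$ without cumulative losses.

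What is missing is an argument for $x<y^4$ that may change the root, or the set tested in outcome~(1), when the heavy vertices concentrate on a small part of $N(r)$.
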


We shall also use the following conversion.

\begin{lemma}[Pure-or-sparse conversion
{\cite[Lemma~2.5]{HuangJuZhou2026}}]
\label{lem:hjz-pure-sparse-conversion}
Let $0<\varepsilon<1/2$, let $D\geq1$, and put
$x=\varepsilon^{5D}$.  Suppose that
$|J|\geq\varepsilon^{-10D^2}$ and that, for every induced subgraph
$L$ of $J$ with $|L|\geq\varepsilon^D|J|$, there is a pure or
$x$-sparse
\[
                    (k,|L|/k^D)\text{-blockade},
             \qquad 2\leq k\leq x^{-1}.
\]
Then $J$ has an
\[
       (\varepsilon^{-1},x^{2D}|J|)\text{-blockade}
\]
whose blocks are pairwise complete or weakly
$\varepsilon^D$-sparse.
\end{lemma}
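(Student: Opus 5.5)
The plan is to build the required $(\varepsilon^{-1},x^{2D}|J|)$-blockade by nested recursion. We apply the hypothesis repeatedly, each time inside a block produced at the previous level, until at least $\varepsilon^{-1}$ leaves have been produced. The lower bound $|J|\geq\varepsilon^{-10D^2}$ only serves to keep every set considered nonempty and of integer order at least one.

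\emph{Easy case.} First apply the hypothesis with $L=J$ to get a pure or $x$-sparse $(k,|J|/k^D)$-blockade with $2\leq k\leq x^{-1}$. If $k\geq\varepsilon^{-1}$ we finish immediately:
\begin{itemize}
\item in a pure blockade, every pair is complete or anticomplete, and an anticomplete pair is weakly $\varepsilon^D$-sparse;
\item in an $x$-sparse blockade, every pair is weakly $x$-sparse, and $x=\varepsilon^{5D}\leq\varepsilon^D$;
\item the width is at least $|J|x^D\geq x^{2D}|J|$.
\end{itemize}

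\emph{Recursive case.} If $k<\varepsilon^{-1}$, we recurse inside each block. At depth $t$ we maintain a family of disjoint sets, each of order at least $|J|/\Pi_t^D$, where $\Pi_t$ is the product of the lengths obtained along the branch. Each pair of sets from different branches inherits the relation between the ancestors at which the branches split, namely complete or sparse. Whenever $\Pi_t<\varepsilon^{-1}$, every current set $L$ has $|L|\geq\varepsilon^D|J|$, so the hypothesis applies to $J[L]$. Since each length is at least $2$, after at most $\log_2(1/\varepsilon)+1$ levels the number of leaves reaches $\varepsilon^{-1}$. We stop at that point and discard surplus leaves.

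The widths are controlled because only the last level can overshoot, by a factor at most $x^{-1}$. Hence the final width is at least $\varepsilon^D x^D|J|=\varepsilon^{6D}|J|\geq x^{2D}|J|$. Complete pairs stay complete under passing to subsets.

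\emph{Main obstacle.} The difficulty is that sparsity is a density statement. If $X,Y$ are weakly (or vertexwise) $x$-sparse and we pass to subsets $X'\subseteq X$, $Y'\subseteq Y$ of relative sizes $f,g$, we only get $|E(X',Y')|\leq (x/(fg))|X'||Y'|$. We therefore need $fg\geq x/\varepsilon^D=\varepsilon^{4D}$. With the naive recursion, the relative sizes can be as small as $(\varepsilon x)^D$, which is far too small. My first attempt would be to choose the branching so that this loss is affordable. Concretely, at every level except the last I would keep only $\min\{k,2\}$ blocks, or cap the product below $\varepsilon^{-1}$. Then any node lying strictly below a sparse split has relative size at least $\Pi^{-D}\geq\varepsilon^{D}$ times the final factor. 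For $x$-sparse splits I would additionally use the vertexwise form: each vertex of a later block has at most $x|B_j|$ neighbours in $B_j$. Only the earlier side's shrinkage then matters, which gives density at most $x\cdot\varepsilon^{-2D}\leq\varepsilon^{3D}$ provided the descendant of the earlier block keeps relative size at least $\varepsilon^{2D}$. If this bookkeeping cannot be made to close, the fallback is to pass to an induced subgraph, as in the Nguyen--Scott--Seymour blockade-building lemmas. In that approach, vertices having more than $\sqrt{x}$-proportion of neighbours in a sparse partner are deleted before recursing, so that sparsity becomes hereditary with only a $\sqrt x$ loss. This deletion step is the part I expect to need the most care.
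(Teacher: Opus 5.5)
\textbf{There is a genuine gap.} The step you flag as the main obstacle is never closed, and the cause is the last level of the recursion. In your scheme the final application of the hypothesis may return any $k\leq x^{-1}$. A leaf $\ell_1$ lying below a child $c_1$ of a sparse split is therefore only guaranteed $|\ell_1|\geq\varepsilon^{D}k_{\mathrm{last}}^{-D}|c_1|$, which can be as small as $\varepsilon^Dx^D|c_1|$. Take $\ell_2$ below the later child $c_2$. The vertexwise bound then gives $|E(\ell_1,\ell_2)|\leq x|c_1||\ell_2|\leq x^{1-D}\varepsilon^{-D}|\ell_1||\ell_2|$, which is vacuous for every $D\geq1$. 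The hypothesis your estimate needs (relative size at least $\varepsilon^{2D}$) is exactly what fails. None of your proposed repairs fixes this:
\begin{itemize}
\item Keeping only $\min\{k,2\}=2$ blocks does not enlarge them, since the guarantee is $|L|/k^D$ however many you keep. It also forces about $\log_2(1/\varepsilon)$ levels, each losing up to $k^D$, which is far worse.
\item Capping the product below $\varepsilon^{-1}$ controls every level except the final one.
\item The degree-deletion fallback addresses vertexwise versus weak sparsity, not the smallness of the leaves.
\end{itemize}
Separately, $\varepsilon^Dx^D=\varepsilon^{D+5D^2}$, not $\varepsilon^{6D}$, although it is still at least $x^{2D}$.

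\textbf{The missing observation} is that your easy case is not special to the root. At any node $L$ with $|L|\geq\varepsilon^D|J|$, a returned blockade with $k\geq\varepsilon^{-1}$ already proves the lemma on its own: its width is at least $\varepsilon^Dx^D|J|\geq x^{2D}|J|$, and its pairs are complete, anticomplete, or weakly $x$-sparse. So you may assume every application gives $2\leq k<\varepsilon^{-1}$. The recursion then closes as follows.
\begin{itemize}
\item At each node keep exactly $n=\lceil k\rceil$ blocks; this is possible because the actual length is an integer at least $k$. Stop a branch once the product of the $n$'s along it is at least $\varepsilon^{-1}$.
\item At every internal node the product of the $k$'s is below $\varepsilon^{-1}$, so the hypothesis applies there.
\item At a leaf that product is below $\varepsilon^{-2}$. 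Hence each leaf has order at least $\varepsilon^{2D}|J|\geq x^{2D}|J|$, and at least $\varepsilon^{2D}|c|$ for each ancestor $c$.
\item Two leaves separated at a sparse node are then weakly $x\varepsilon^{-4D}=\varepsilon^D$-sparse, using only weak sparsity of the parent pair. Complete and anticomplete relations are inherited.
\item The identity $\sum_\ell\prod_v n_v^{-1}=1$, taken over root-to-leaf paths, gives at least $\varepsilon^{-1}$ leaves.
\end{itemize}
No deletion step is needed.
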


The next statement is the parameter range of
\cite[Lemma~2.8]{HuangJuZhou2026} used here.  We include a direct proof
of the product bound because the required constant was not clear to us
from the displayed comparison; we may have missed an intended
adjustment.

\begin{lemma}[Blockade iteration]
\label{lem:hjz-blockade-iteration}
Let $B\geq4$, put $\zeta=2^{-4B}$, and let
$0<x<1$, $0<y\leq\zeta$, and $A>1$.  Suppose that
$|J|\geq y^{-(A+2)}$ and that every induced subgraph $L$ of $J$ with
$|L|\geq\zeta|J|$ contains disjoint sets $X,Y$ satisfying
\[
 |X|\geq y^A|L|,\qquad
 |Y|\geq(1-By)|L|,
\]
where $Y$ is either $x$-sparse or complete to $X$.  Then $J$ has an
$x$-sparse or complete
\[
                     (y^{-1},y^{A+2}|J|)\text{-blockade}.
\]
\end{lemma}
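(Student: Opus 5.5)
The plan is a greedy iteration. Blocks are peeled off one at a time, and each new block is chosen inside the large set $Y$ produced at the previous step. Put $k=\lceil y^{-1}\rceil$ and $L_1=V(J)$. For $t=1,\ldots,k$ we will maintain $|L_t|\geq\zeta|J|$. Applying the hypothesis to $J[L_t]$ gives disjoint $X_t,Y_t\subseteq L_t$ with $|X_t|\geq y^A|L_t|$ and $|Y_t|\geq(1-By)|L_t|$, where $Y_t$ is $x$-sparse or complete to $X_t$. We then set $B_t=X_t$ and $L_{t+1}=Y_t$.

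Since $B_{t'}\subseteq L_{t'}\subseteq Y_t$ whenever $t'>t$, the blocks are pairwise disjoint. For such $t'>t$, every vertex of $B_{t'}$ either has at most $x|B_t|$ neighbours in $B_t$, or $B_{t'}$ is complete to $B_t$. Thus the later block is $x$-sparse to the earlier one, which is exactly the blockade convention ``$B_i$ is $x$-sparse to $B_j$ whenever $i>j$''; otherwise the pair is complete. The length is $k\geq y^{-1}$. For the widths, $\zeta\geq y$ gives
\[
|B_t|\geq y^A\zeta|J|\geq y^{A+1}|J|\geq y^{A+2}|J|.
\]
The hypothesis $|J|\geq y^{-(A+2)}$ ensures that these lower bounds are at least $1$, so all blocks are nonempty.

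The only real step is the invariant $|L_t|\geq\zeta|J|$ for $t\leq k$, that is, $(1-By)^{k-1}\geq 2^{-4B}$. A crude estimate $\ln(1-u)\geq-2u$ loses too much here: it gives only $e^{-4B}$, which is smaller than $2^{-4B}$. So I would use the sharper bound $\ln(1-u)\geq-u(1+u)$, valid for $0\leq u\leq1/2$, with $u=By\leq B2^{-4B}\leq 2^{-10}$ since $B\geq4$. Together with $k-1\leq y^{-1}$ this yields
\[
(1-By)^{k-1}\geq\exp\bigl(-B(1+2^{-10})\bigr)\geq 2^{-4B},
\]
because $e^{1+2^{-10}}<16$. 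This constant bookkeeping is where I expect the care to be needed, and the margin is comfortable once the sharper logarithm bound is used.
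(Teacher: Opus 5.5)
There is a genuine gap at the point where you declare your sequence to be the required blockade. Your greedy construction gives a blockade in which, for each pair $t<t'$, the block $B_{t'}$ is either $x$-sparse to $B_t$ or complete to $B_t$. Which of the two holds is dictated by the case that occurred for $Y_t$.

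The lemma, however, asks for an $x$-sparse \emph{or} complete blockade. Under the paper's definitions this means all pairs are of one kind: either $B_i$ is $x$-sparse to $B_j$ for every $i>j$, or every pair is complete. A mixed sequence is neither. The later applications genuinely need the uniform version: the ``$x$-sparse or pure blockade'' outcomes in the iterative and R\"odl reductions, and the hypothesis of the pure-or-sparse conversion, all require it.

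The repair is the step the paper's proof contains. The relation of every later block to $B_t$ depends only on $t$, so classify the indices by this type, assigning the last index arbitrarily. A majority class then gives a uniformly $x$-sparse or uniformly complete subsequence.

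This halves the length, so you must run $m=\lceil 2/y\rceil$ rounds instead of $\lceil y^{-1}\rceil$. The invariant then becomes $(1-By)^{2/y}\geq 2^{-4B}$. This is exactly where the crude bound $\ln(1-u)\geq -2u$ fails, since it yields only $e^{-4B}$. Your sharper bound still gives $\exp\bigl(-2B(1+2^{-10})\bigr)\geq 2^{-4B}$, because $e^{2(1+2^{-10})}<16$. The paper uses $\ln(1-u)\geq -u/(1-u)\geq -2(\ln 2)u$ for the same purpose.

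Incidentally, for the $y^{-1}$ rounds you actually ran, the crude bound gives $e^{-2B}\geq 2^{-4B}$ and would already suffice. Your remark about $e^{-4B}$ is only correct for $2/y$ rounds, which suggests the doubling was what the count needed all along.

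With these changes your argument coincides with the paper's. The paper phrases the same greedy as a maximal sequence whose last block serves as the reservoir, proves $m\geq 2/y$, and then passes to the majority class.
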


\begin{proof}
Put $u=By$.  Since $B\geq4$ and $y\leq2^{-4B}$, we have $u<1/4$.
Furthermore,
\[
 \log(1-u)\geq-\frac{u}{1-u}\geq-2(\log 2)u,
\]
because $1/(1-u)<4/3<2\log 2$.  Hence
\begin{equation}
        (1-By)^{2/y}\geq
        \exp(-4B\log 2)=2^{-4B}=\zeta.
\label{eq:hjz-product-bound}
\end{equation}

Choose a sequence $(C_1,\ldots,C_m)$ of pairwise disjoint vertex sets
of maximum possible length such
that
\[
 |C_i|\geq y^{A+2}|J|\quad(i\in[m]),\qquad
 |C_m|\geq(1-By)^m|J|,
\]
and, for each $i<m$, every later block is either $x$-sparse or
complete to $C_i$, with the choice fixed for that $i$.  Such a sequence
exists with $m=1$, since $|J|\geq y^{-(A+2)}$.

Suppose that $m<2/y$.  By
\eqref{eq:hjz-product-bound},
\[
                   |C_m|\geq\zeta|J|.
\]
The hypothesis applied to $J[C_m]$ gives disjoint $X,Y\subseteq C_m$.
Since $y\leq\zeta\leq1$, we have $\zeta\geq y^2$, and consequently
\[
       |X|\geq y^A|C_m|
             \geq y^A\zeta|J|
             \geq y^{A+2}|J|.
\]
Also
\[
       |Y|\geq(1-By)|C_m|
             \geq(1-By)^{m+1}|J|.
\]
In addition, $By<1/4$ and $|C_m|\geq\zeta|J|$, so
\[
 |Y|>\frac{3}{4}\zeta|J|
       \geq y^{A+2}|J|.
\]
For the last inequality, use $A>1$ and
$y\leq\zeta\leq1/2$, which give
$y^{A+2}\leq y^3\leq\zeta/4$.
Both $X$ and $Y$ inherit the fixed relation of $C_m$ to every earlier
block, while $Y$ is $x$-sparse or complete to $X$.  Thus
$(C_1,\ldots,C_{m-1},X,Y)$ contradicts the maximality of $m$.
Therefore $m\geq2/y$.

For each index $i$, classify it according to whether all later blocks
are $x$-sparse or complete to $C_i$, assigning the last index
arbitrarily.  One class has at least $m/2\geq y^{-1}$ indices.
The corresponding subsequence is the required $x$-sparse or complete
blockade.
\end{proof}

The statement of~\cite[Lemma~2.9]{HuangJuZhou2026} treats sparse graphs,
but our application is complement-symmetric.  Perhaps complementation
was intended implicitly.  We therefore prove the restricted-graph
analogue needed below.  This is a variant, not a restatement, of that
lemma.

\begin{lemma}[Restricted-graph iteration]
\label{lem:hjz-restricted-iteration}
Let $0<x<c<1$, let $b_1>1$ and $b_2,b_3>0$, and suppose that
\[
                         b_1b_2\geq b_2+b_3.
\]
Suppose that $J$ contains a $c$-restricted induced subgraph of order at
least $c^{b_2}|J|$, and that, for every $\lambda\in[x,c]$ and every
$\lambda$-restricted induced subgraph $L$ with
$|L|\geq\lambda^{b_2}|J|$, the graph $L$ contains a
$\lambda^{b_1}$-restricted induced subgraph $L'$ satisfying
\[
                         |L'|\geq\lambda^{b_3}|L|.
\]
Then $J$ contains an $x$-restricted induced subgraph of order at least
$x^{b_1b_2}|J|$.
\end{lemma}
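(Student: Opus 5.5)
The plan is to iterate the hypothesis along a geometric sequence of restriction parameters, starting from $c$ and decreasing towards $x$, while tracking the order of the current subgraph relative to $|J|$. Put $\lambda_0=c$, and let $L_0$ be the given $c$-restricted induced subgraph, so that $|L_0|\geq c^{b_2}|J|$. Given a $\lambda_t$-restricted induced subgraph $L_t$ with $|L_t|\geq\lambda_t^{b_2}|J|$ and $\lambda_t\in[x,c]$, we apply the hypothesis with $\lambda=\lambda_t$. This produces a $\lambda_t^{b_1}$-restricted induced subgraph $L_{t+1}$ of order at least $\lambda_t^{b_3}|L_t|\geq\lambda_t^{b_2+b_3}|J|$. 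Set $\lambda_{t+1}=\lambda_t^{b_1}$.

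The key inequality is $b_1b_2\geq b_2+b_3$. Because $\lambda_t<1$, it gives
\[
\lambda_t^{b_2+b_3}\geq\lambda_t^{b_1b_2}=\lambda_{t+1}^{b_2}.
\]
Hence $|L_{t+1}|\geq\lambda_{t+1}^{b_2}|J|$, and the invariant ``$L_t$ is $\lambda_t$-restricted with $|L_t|\geq\lambda_t^{b_2}|J|$'' is preserved. Since $b_1>1$ and $\lambda_0=c<1$, the sequence $\lambda_t=c^{b_1^t}$ decreases to $0$. So there is a first index $T$ with $\lambda_{T+1}\leq x$, and for every $t\leq T$ we have $\lambda_t>x$, so each application of the hypothesis is legitimate. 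Note that $x<c$ guarantees $T\geq0$.

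The remaining step is the stopping point, which is the only real care needed. At step $T$ we have $\lambda_T\in(x,c]$ and $L_T$ is $\lambda_T$-restricted. One more application yields $L_{T+1}$, which is $\lambda_T^{b_1}$-restricted. Since $\lambda_T^{b_1}=\lambda_{T+1}\leq x$, the subgraph $L_{T+1}$ is also $x$-restricted, because restrictedness is monotone in the parameter. Its order satisfies
\[
|L_{T+1}|\geq\lambda_T^{b_2+b_3}|J|\geq\lambda_T^{b_1b_2}|J|.
\]
We want a lower bound by $x^{b_1b_2}|J|$, but $\lambda_T>x$ does not directly give $\lambda_T^{b_1b_2}\geq x^{b_1b_2}$ in the useful direction: that inequality would need $\lambda_T\geq x$, which holds, and indeed $\lambda_T>x$ gives $\lambda_T^{b_1b_2}>x^{b_1b_2}$. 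So $L_{T+1}$ satisfies both requirements and the proof is complete.

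I expect no genuine obstacle. The only point to check is that each $L_t$ is an induced subgraph of $J$, so that the hypothesis, which quantifies over $\lambda$-restricted induced subgraphs of $J$ of relative order at least $\lambda^{b_2}$, applies at every stage. This holds because induced subgraphs of induced subgraphs are induced subgraphs of $J$. One should also confirm that $\lambda_T\leq c$, which is true since the sequence is decreasing.
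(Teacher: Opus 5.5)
Your proposal is correct and follows the paper's proof: iterate $\lambda\mapsto\lambda^{b_1}$ starting from $c$, preserve the invariant $|L|\geq\lambda^{b_2}|J|$ using $\lambda^{b_2+b_3}\geq\lambda^{b_1b_2}$, and stop at the first step where the new parameter reaches $x$ or below, so that $\lambda_T\geq x$ gives the final bound $x^{b_1b_2}|J|$. Your stopping rule uses $\leq x$ where the paper uses $<x$, which is immaterial, and the hesitant sentence about the direction of $\lambda_T^{b_1b_2}\geq x^{b_1b_2}$ can simply be deleted.
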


\begin{proof}
Start with the asserted $c$-restricted graph and iterate while the
current parameter $\lambda$ is at least $x$, replacing it by
$\lambda'=\lambda^{b_1}$.  If the current graph has order at least
$\lambda^{b_2}|J|$, then the next graph has order at least
\[
 \lambda^{b_2+b_3}|J|
     \geq\lambda^{b_1b_2}|J|
     =(\lambda')^{b_2}|J|.
\]
Thus the induction invariant is preserved; no fixed choice between a
graph and its complement is needed.  At the first step for which
$\lambda'<x$, the preceding value satisfies $\lambda\geq x$, and hence
$\lambda'=\lambda^{b_1}\geq x^{b_1}$.  The resulting graph is
$x$-restricted and has order at least
\[
               (\lambda')^{b_2}|J|
                    \geq x^{b_1b_2}|J|.
\]
\end{proof}

\subsection{Generalized niceness implies the
Erd\H{o}s--Hajnal property}

We next give a complete proof, under the conventions above, of the
reduction stated in Lemma~1.12 of~\cite{HuangJuZhou2026}, using the
quantitative statements above in precisely the forms in which they are
needed.

\begin{theorem}[Generalized niceness reduction]
\label{thm:hjz-generalized-nice}
Let $\mathcal F$ be a finite family.  If $\mathcal F$ is
leaf-reducible, wonderful, and generalized nice, then $\mathcal F$
has the Erd\H{o}s--Hajnal property.
\end{theorem}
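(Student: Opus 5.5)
We argue by contradiction via the viral/Erd\H{o}s--Hajnal equivalence of Buci\'c--Fox--Pham~\cite[Theorem~4]{BucicFoxPham2024}, in its finite-family form. It suffices to find $C\geq1$ such that every $\mathcal F$-free graph $G$ and every $0<\varepsilon<1/2$ give an $\varepsilon$-restricted induced subgraph of order at least $\varepsilon^C|G|$ (a ``polynomial Rödl'' statement); this yields the Erd\H{o}s--Hajnal property. Since $\mathcal F$ is leaf-reducible, wonderful and generalized nice, all of which refer to $\overline{\mathcal F}$-free hosts, we work in $\overline G$ where convenient. Restrictedness is complement-invariant, so no direction is lost. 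The target is reached by Lemma~\ref{lem:hjz-restricted-iteration}: we exhibit a $c$-restricted subgraph of order $c^{b_2}|G|$ for a fixed constant $c$ (Rödl's theorem applied to $\mathcal F$-free graphs gives this, or we use $\lambda=1/2$ trivially), together with the step ``$\lambda$-restricted $L$ contains a $\lambda^{b_1}$-restricted $L'$ with $|L'|\geq\lambda^{b_3}|L|$''. We then iterate down to $x=\varepsilon$.

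The core is the one-step improvement. Fix a $\lambda$-restricted $L$; after complementing, assume $\Delta(L)\le\lambda|L|$. Apply generalized niceness to the relevant (complemented) graph with $\varepsilon$ a small power $\lambda^{\beta}$. Each outcome is then handled as follows.
\begin{enumerate}
\item[(GN4)] This outcome is directly the desired restricted subgraph.
\item[(GN2)] A clique or stable set of order $(\lambda^{\beta c_3}|L|)^{c_4}$ is trivially $0$-restricted. It is large enough unless $|L|$ is small relative to $\lambda^{-O(1)}$, and in that regime we first ensure $|G|\ge\varepsilon^{-C'}$; otherwise a single vertex already satisfies the claim.
\item[(GN3)] A complete or anticomplete $(k,|L|/k^{c_5})$-blockade with $k\ge\lambda^{-\beta c_6}$ is handled by taking the union of its blocks, trimmed to equal sizes. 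That union is $k^{-1}$-restricted, hence $\lambda^{b_1}$-restricted, and has order $\ge|L|/k^{c_5}$. We take only $k=\lceil\lambda^{-\beta c_6}\rceil$ blocks so that this order is polynomial in $\lambda$.
\item[(GN1)] This outcome gives an $(\varepsilon^{-1},\varepsilon^{c_1}|L|)$-blockade whose pairs are complete or weakly $\varepsilon^{c_2}$-sparse. By averaging and deleting high-degree vertices, we pass to equal-size anticonnected blocks in which noncomplete pairs are $y^a$-sparse in both directions, with $y$ a power of $\lambda$; anticonnectedness comes from taking a largest anticomponent, after using (GN2)-type arguments or restrictedness when anticomponents are tiny. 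Wonderfulness then applies. Outcome (W1) is again a restricted subgraph. Outcome (W2) gives a block $B_i$ that few outside vertices split unevenly. Combining this with Lemma~\ref{lem:hjz-leaf-expansion} for the sparse side, and with Lemma~\ref{lem:hjz-blockade-iteration}, we produce either a long sparse-or-complete blockade or an anticomplete pair $X,Y$ with $|Y|\ge(1-hy)|L|$. Then Lemma~\ref{lem:hjz-pure-sparse-conversion}, or the blockade union argument of case (GN3), yields the restricted subgraph.
\end{enumerate}

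The main obstacle is the (GN1)/(W2) branch. There one must convert ``few vertices partially attached to $B_i$'' into a genuine polynomial gain without losing the exponent. I would try the following: vertices with at least $|B_i|/2$ neighbours in $B_i$ are too many unless $\Delta(L)$ is large, which contradicts $\lambda$-sparsity when $|B_i|\gg\lambda|L|$. So almost all outside vertices are anticomplete to $B_i$, which gives the pair $(B_i,Y)$ of the form needed by Lemma~\ref{lem:hjz-blockade-iteration}, applied inductively on induced subgraphs of size $\ge\zeta|L|$. Finally, we must check that all exponent choices ($\beta$, $b_1,b_2,b_3$ satisfying $b_1b_2\ge b_2+b_3$) are consistent. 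This is bookkeeping but must be arranged so that $b_1>1$ is independent of $\lambda$.
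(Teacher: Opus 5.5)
There is a genuine gap, and it lies in the outcomes you treat as easy. Lemma~\ref{lem:hjz-restricted-iteration} needs, at every step, a restricted subgraph whose order is \emph{linear} in $|L|$, namely $\lambda^{b_3}|L|$. Outcome (GN2) gives only a homogeneous set of order $(\lambda^{\beta c_3}|L|)^{c_4}$, and $c_4>0$ is arbitrary. When $c_4<1$ this is smaller than $\lambda^{b_3}|L|$ as soon as $|L|$ is large compared with a power of $\lambda^{-1}$. So your claim that it is ``large enough unless $|L|$ is small'' has the dependence backwards.

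Outcome (GN3) fails in the same regime. The width $|L|/k^{c_5}$ is determined by the actual parameter $k$, which may be as large as $|L|^{1/c_5}$. Discarding blocks does not enlarge the remaining ones, so the union of $\lceil\lambda^{-\beta c_6}\rceil$ blocks can have sublinear order. Neither outcome has a linear-size restricted counterpart. Consequently, no polynomial R\"odl statement valid for every $\varepsilon$ and every $|G|$ can be reached by this iteration.

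The missing idea is to keep these outcomes alive and argue by contradiction for one fixed graph. The paper carries a clique-or-stable-set alternative and a complete-or-anticomplete blockade alternative through every stage: (S1),(S3) $\to$ (T1),(T2) $\to$ (I2),(I3) $\to$ (R3),(R2). Only at the end does it fix $G$ with $n=|G|$ and $\operatorname{hom}(G)<n^\theta$, and it lets the parameters depend on $n$: $x=n^{-1/(3T)}$ and $\varepsilon=x^{1/(7T)}$. Then every homogeneous-set outcome, and every blockade with $k>\varepsilon^{-1}$, already gives $\operatorname{hom}(G)\geq n^\theta$. The surviving blockades with $2\leq k\leq\varepsilon^{-1}$ are converted into an $\varepsilon$-restricted subgraph of order $\varepsilon^{3T}n$ by the Nguyen--Scott--Seymour conversion (item~3 in the proof), and the greedy bound gives the contradiction.

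There are three secondary problems. First, you cannot complement freely. Generalized niceness and wonderfulness apply only to $\overline{\mathcal F}$-free graphs, and the edge-counting step after (W2) needs that graph to be sparse. When it is dense, the paper applies Lemma~\ref{lem:hjz-leaf-expansion} to its sparse $\mathcal F$-free complement; this is where leaf-reducibility actually enters. Second, Lemma~\ref{lem:hjz-pure-sparse-conversion} outputs a (GN1)-type blockade, not a restricted subgraph, so it cannot close the (W2) branch. The paper instead iterates the anticomplete pair using the complete/anticomplete version of Lemma~\ref{lem:hjz-blockade-iteration}. Third, not every graph is $1/2$-restricted (a star on at least five vertices is not), so that starting point is false, although R\"odl's theorem does supply a valid start.
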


\begin{proof}
It is enough to prove a polynomial clique-or-stable-set bound for every
$\overline{\mathcal F}$-free graph, since complementation preserves
$\operatorname{hom}$.

We use the following three standard results of Nguyen--Scott--Seymour,
stated in~\cite[Lemmas~2.3, 2.4, and~2.6]{HuangJuZhou2026}; see
also~\cite{NguyenScottSeymourP5}.
\begin{enumerate}
\item If every anticomponent of a graph $Q$ has order less than
      $|Q|/k$, where $k\geq2$ is an integer, then $Q$ has a complete
      $(k,|Q|/k^2)$-blockade.
\item If $t\geq5$, $0<\eta\leq1/4$,
      $\ell=\lceil\eta^{-1}\rceil$, and
      $(C_1,\ldots,C_\ell)$ is a blockade of width at least $w_0$,
      then one may choose $D_i\subseteq C_i$ with
      \[
                 |D_i|=\lceil\eta\lceil w_0\rceil\rceil
      \]
      so that every weakly $\eta^t$-sparse pair becomes
      $\eta^{t-5}$-sparse in both directions.
\item If $r\geq1$, $0<\eta<1/2$, and every induced subgraph $L$ of a
      graph $Q$ with $|L|\geq\eta^{2r}|Q|$ contains a complete or
      anticomplete
      \[
                    (k,|L|/k^r)\text{-blockade},
             \qquad 2\leq k\leq\eta^{-1},
      \]
      then $Q$ has an $\eta$-restricted induced subgraph of order at
      least $\eta^{3r}|Q|$.
\end{enumerate}

Let
\[
 \gamma_1\geq3,\quad \gamma_2\geq8,\quad
 \gamma_3,\gamma_4,\gamma_5,\gamma_8>0,\quad
 \gamma_6\geq1,\quad\gamma_7\geq4
\]
witness generalized niceness.  Let $d>0,h\geq1$ be supplied by
Lemma~\ref{lem:hjz-leaf-expansion}, and let $\nu\geq6$ witness
wonderfulness.  Put
\[
 \begin{split}
 A_1&=\nu\gamma_3,\qquad A_2=\gamma_4,\\
 A_3&=\nu(\gamma_1+\gamma_8+5)+\gamma_5+4d+1,\\
 A_4&=4,\qquad A_5=4+h.
 \end{split}
\]
In particular,
\[
 A_3\geq A_4,\quad A_3\geq4d+1,\quad
 A_3\geq\nu\gamma_8,\quad A_3\geq\gamma_5,\quad
 A_3\geq\nu(\gamma_1+5).
\]

\medskip
\noindent\emph{First reduction.}
We claim that for every $0<y<1/2$ and every $y$-restricted
$\overline{\mathcal F}$-free graph $G$, at least one of the following
holds:
\begin{enumerate}[label=(S\arabic*)]
\item $\operatorname{hom}(G)\geq(y^{A_1}|G|)^{A_2}$;
\item $G$ has a $y^{A_4}$-restricted induced subgraph of order at
      least $y^{A_3}|G|$;
\item $G$ has a complete or anticomplete
      $(k,|G|/k^{A_3})$-blockade with $k\geq y^{-1}$;
\item there are disjoint $X,Y\subseteq V(G)$ such that
      \[
       |X|\geq y^{A_3}|G|,\qquad
       |Y|\geq(1-A_5y)|G|,
      \]
      and $Y$ is complete or anticomplete to $X$.
\end{enumerate}

If $|G|\leq y^{-A_3}$, a singleton gives (S2), so assume
$|G|>y^{-A_3}$.  If $\overline G$ has maximum degree at most
$y|G|$, apply Lemma~\ref{lem:hjz-leaf-expansion} to $\overline G$
with exponent $4$.  Since $\overline G$ is $\mathcal F$-free,
complementing its conclusions gives (S2) or (S4), using
$A_3\geq4d+1$ and $A_5\geq h$.

We may therefore suppose that $G$ has maximum degree at most $y|G|$.
Set $\eta=y^\nu$.  Apply generalized niceness to $G$ with parameter
$\eta$.  Its second, third, and fourth outcomes give respectively
(S1), (S3), and (S2), because
\[
 \eta^{\gamma_3}=y^{A_1},\qquad
 \eta^{-\gamma_6}\geq y^{-1},\qquad
 \eta^{\gamma_7}\leq y^{A_4},\qquad
 \eta^{\gamma_8}\geq y^{A_3}.
\]

It remains to treat the first generalized-niceness outcome.  Retain
exactly
\[
             \ell=\lceil\eta^{-1}\rceil
\]
blocks, write them as $(C_1,\ldots,C_\ell)$, and put
$w_0=\eta^{\gamma_1}|G|$.  Since
$A_3\geq\nu(\gamma_1+5)$ and $|G|>y^{-A_3}$,
\[
                  w_0\geq\eta^{-5}>1.
\]
Apply the weak-sparsity cleanup result and obtain equal-sized
$D_i\subseteq C_i$ with
\[
 |D_i|=\lceil\eta\lceil w_0\rceil\rceil\geq\eta w_0,
\]
such that every noncomplete pair is
$\eta^{\gamma_2-5}$-sparse in both directions.

Suppose first that some $D_i$ has no anticomponent of order at least
$|D_i|/\ell$.  The anticomponent result gives a complete
$(\ell,|D_i|/\ell^2)$-blockade.  Since $\eta\leq1/4$ and hence
$\ell\leq\eta^{-2}$,
\[
 \frac{|D_i|}{\ell^2}
   \geq\eta^5w_0
   =\eta^{\gamma_1+5}|G|
   =y^{\nu(\gamma_1+5)}|G|
   \geq y^{A_3}|G|.
\]
Also $\ell\geq\eta^{-1}=y^{-\nu}\geq y^{-1}$, so (S3) holds.

We may thus choose, for every $i$, an anticomponent of $D_i$ of order
at least $|D_i|/\ell$.  A connected graph has a connected induced
subgraph of every smaller positive order: repeatedly delete a leaf
of a spanning tree.  Applying this in the complements, choose
anticonnected $B_i\subseteq D_i$ of the common order
\[
                   w=\left\lceil\frac{|D_i|}{\ell}\right\rceil.
\]
Here
\[
 w\geq\frac{|D_i|}{\ell}
   \geq\eta^2|D_i|
   \geq\eta^3w_0
   \geq\eta^{-2}>1.
\]
Every pair $B_i,B_j$ is complete or
$\eta^{\gamma_2-7}$-sparse, because
$|B_j|\geq\eta^2|D_j|$, and
\[
 \eta^{\gamma_2-7}
       =y^{\nu(\gamma_2-7)}
       \leq y^\nu.
\]
Also $\ell\geq\eta^{-1}=y^{-\nu}$.  Thus wonderfulness applies to the
equal-width blockade
$\mathcal B=(B_1,\ldots,B_\ell)$.

Its first outcome gives a $y^4$-restricted subgraph of order at least
\[
 w\geq\eta^3w_0
      =y^{\nu(\gamma_1+3)}|G|
      \geq y^{A_3}|G|,
\]
and hence (S2).  In its second outcome, choose the corresponding
index $i$, and put
\[
                  U=V(G)\setminus\bigcup_{j=1}^{\ell}B_j.
\]
At most $y|G|$ vertices $v$ of $U$ satisfy
$0<|N_G(v)\cap B_i|<|B_i|/2$.  Since $G$ has maximum degree at most
$y|G|$, edge counting shows that at most $2y|G|$ vertices of $U$ have
at least $|B_i|/2$ neighbours in $B_i$.

Moreover, $|D_i|\leq\lceil w_0\rceil\leq2w_0$, and
$|D_i|/\ell>1$, so
\[
 |B_i|\leq\frac{2|D_i|}{\ell}\leq\frac{4w_0}{\ell}.
\]
Consequently,
\[
 \sum_{j=1}^{\ell}|B_j|
       \leq4w_0
       \leq\eta^{-2}\eta^{\gamma_1}|G|
       \leq\eta|G|
       \leq y|G|.
\]
Let $Y$ be the set of vertices of $U$ anticomplete to $B_i$.  Then
\[
 |Y|\geq |G|-\sum_j|B_j|-3y|G|
       \geq(1-4y)|G|
       \geq(1-A_5y)|G|,
\]
while
\[
 |B_i|\geq\eta^3w_0
        =y^{\nu(\gamma_1+3)}|G|
        \geq y^{A_3}|G|.
\]
Thus $X=B_i$ and $Y$ give (S4), proving the first reduction.

\medskip
\noindent\emph{One-block reduction.}
Put
\[
 \rho=2^{-4A_5},\qquad
 B_1=A_1+1,\quad B_2=A_2,\quad
 B_3=A_3+2,\quad B_4=A_4.
\]
We claim that for $0<y\leq\rho$, every
$\rho y$-restricted $\overline{\mathcal F}$-free graph $G$ has one
of the following:
\begin{enumerate}[label=(T\arabic*)]
\item $\operatorname{hom}(G)\geq(y^{B_1}|G|)^{B_2}$;
\item a complete or anticomplete
      $(k,|G|/k^{B_3})$-blockade with $k\geq y^{-1}$;
\item a $y^{B_4}$-restricted induced subgraph of order at least
      $y^{B_3}|G|$.
\end{enumerate}

If $|G|\leq y^{-B_3}$, a singleton gives (T3).  Otherwise, suppose
first that every induced $L\subseteq G$ with $|L|\geq\rho|G|$ has
the pair in (S4).  The proof of
Lemma~\ref{lem:hjz-blockade-iteration}, with the sparse alternative
replaced throughout by the anticomplete alternative, gives an
anticomplete or complete
\[
             (y^{-1},y^{A_3+2}|G|)\text{-blockade}.
\]
This is (T2).  Notice that the valid product estimate used here is
exactly
\[
                  (1-A_5y)^{2/y}\geq2^{-4A_5}=\rho.
\]

Otherwise choose $L\subseteq G$ with $|L|\geq\rho|G|$ having no such
pair.  Since $y\leq\rho$, we have $|L|\geq y|G|$, and the
$\rho y$-restrictedness of $G$ implies that $L$ is $y$-restricted.
Apply the first reduction to $L$.  Outcome (S4) is excluded.  The
other three outcomes imply (T1)--(T3), using
\[
 \begin{split}
 y^{A_1}|L|&\geq y^{A_1+1}|G|,\\
 y^{A_3}|L|&\geq y^{A_3+1}|G|
                         \geq y^{A_3+2}|G|,\\
 \frac{|L|}{k^{A_3}}
   &\geq\frac{y|G|}{k^{A_3}}
    \geq\frac{|G|}{k^{A_3+1}}
    \geq\frac{|G|}{k^{A_3+2}},
 \end{split}
\]
where the last line uses $k\geq y^{-1}$.  The one-block reduction
follows.

\medskip
\noindent\emph{Iterative reduction.}
Set
\[
                     R=B_1+3B_3,\qquad S=B_2.
\]
We claim that for every $0<x<\rho^2$ and every
$\rho^2$-restricted $\overline{\mathcal F}$-free graph $G$, at least
one of the following holds:
\begin{enumerate}[label=(I\arabic*)]
\item $G$ has an $x$-restricted induced subgraph of order at least
      $x^R|G|$;
\item $\operatorname{hom}(G)\geq(x^R|G|)^S$;
\item $G$ has a complete or anticomplete
      $(k,|G|/k^R)$-blockade with $k\geq2$.
\end{enumerate}

Suppose none holds.  Let $\lambda\in[x,\rho^2]$, put
$y=\lambda/\rho$, and let $L\subseteq G$ be a
$\lambda$-restricted induced subgraph with
\[
                 |L|\geq
          \lambda^{4B_3/B_4}|G|.
\]
Then $0<y\leq\rho$, and $\lambda=\rho y\geq y^2$.  Apply the
  one-block reduction to $L$.  Its clique-or-stable-set outcome would imply
(I2), because
\[
 y^{B_1}|L|
  \geq y^{B_1+8B_3/B_4}|G|
  \geq x^R|G|,
\]
where $8B_3/B_4=2B_3$, $y\geq\lambda\geq x$, and
$R=B_1+3B_3$.  Its blockade outcome would imply (I3),
because $k\geq y^{-1}$ and
\[
 \frac{|L|}{k^{B_3}}
 \geq\frac{y^{8B_3/B_4}|G|}{k^{B_3}}
 \geq\frac{|G|}{k^{B_3+8B_3/B_4}}
\geq\frac{|G|}{k^R}.
\]
Here $y\geq k^{-1}$, $8B_3/B_4=2B_3$, and
$R=B_1+3B_3$ justify the last two comparisons.
Consequently, $L$ contains a
$\lambda^{B_4/2}$-restricted induced subgraph of order at least
$\lambda^{B_3}|L|$: indeed,
\[
                  y^{B_4}\leq\lambda^{B_4/2},
            \qquad y^{B_3}\geq\lambda^{B_3}.
\]

Apply Lemma~\ref{lem:hjz-restricted-iteration} with
\[
 c=\rho^2,\qquad
 b_1=B_4/2,\qquad b_2=4B_3/B_4,\qquad b_3=B_3.
\]
Here
\[
 b_1b_2=2B_3
      =B_3+\frac{4B_3}{B_4}=b_2+b_3,
\]
because $B_4=4$.  Taking $G$ itself gives the initial
$c$-restricted induced subgraph of order at least $c^{b_2}|G|$.
The lemma therefore gives an $x$-restricted induced subgraph of order
at least
\[
                 x^{2B_3}|G|\geq x^R|G|,
\]
contradicting the failure of (I1).

\medskip
\noindent\emph{R\"odl reduction.}
Put $\xi=\rho^2$.  Choose one graph
$H_0\in\overline{\mathcal F}$.  By R\"odl's theorem
\cite{Rodl1986}, applied to $H_0$, there is $\delta>0$ such that every
$\overline{\mathcal F}$-free graph has a $\xi$-restricted induced
subgraph of order at least $\delta$ times its order.  Choose
$T\geq\max\{1,2R\}$ so large that
\[
                  \delta\geq\xi^{T/2},
              \qquad \delta\geq2^{R-T}.
\]
We claim that for every $0<x<1/2$ and every
$\overline{\mathcal F}$-free graph $G$, one of the following holds:
\begin{enumerate}[label=(R\arabic*)]
\item an $x$-restricted induced subgraph of order at least $x^T|G|$;
\item a complete or anticomplete
      $(k,|G|/k^T)$-blockade with $k\geq2$;
\item a clique or stable set of order at least $(x^T|G|)^S$.
\end{enumerate}

Choose a $\xi$-restricted $L\subseteq G$ with
$|L|\geq\delta|G|$.  If $x\geq\xi$, then $L$ is $x$-restricted and
\[
 |L|\geq2^{R-T}|G|
      \geq x^{T-R}|G|
      \geq x^T|G|,
\]
so (R1) holds.  If $x<\xi$, apply the iterative reduction to $L$.
Its first two outcomes give (R1) or (R3), since
\[
 x^R|L|\geq x^R\xi^{T/2}|G|
          \geq x^{R+T/2}|G|
          \geq x^T|G|.
\]
In its blockade outcome, for $k\geq2$,
\[
 \frac{|L|}{k^R}
     \geq\frac{\delta|G|}{k^R}
     \geq\frac{|G|}{k^T},
\]
because $\delta\geq2^{R-T}\geq k^{R-T}$.  Thus (R2) holds.

\medskip
\noindent\emph{Clique-or-stable-set conclusion.}
Let
\[
 q_0=42T^2,\qquad n_0=2^{q_0},\qquad
 \theta=\min\{q_0^{-1},S/2\}.
\]
We prove that every $\overline{\mathcal F}$-free graph $G$ satisfies
\[
                       \operatorname{hom}(G)\geq |G|^\theta.
\]
The assertion is immediate for $|G|\leq n_0$: for
$2\leq|G|\leq n_0$, one has
$|G|^\theta\leq n_0^{1/q_0}=2$, while $|G|=1$ is trivial.

Suppose $n=|G|>n_0$ and, for a contradiction,
$\operatorname{hom}(G)<n^\theta$.  Put
\[
       x=n^{-1/(3T)},\qquad
       \varepsilon=x^{1/(7T)}
                  =n^{-1/(21T^2)}.
\]
Then $x<\varepsilon<1/4$.

We claim that every induced $L\subseteq G$ with
$|L|\geq\varepsilon^{2T}n$ has a complete or anticomplete
\[
             (k,|L|/k^T)\text{-blockade},
             \qquad 2\leq k\leq\varepsilon^{-1}.
\]
Otherwise apply the R\"odl reduction to such an $L$ with parameter
$x$.  In outcome (R1), the restricted subgraph has order $N$ at least
\[
 x^T|L|
   \geq x^{T+2/7}n
   =x^{-2T+2/7}
   \geq x^{-1}.
\]
After complementing if necessary, its maximum degree is at most $xN$.
The greedy bound gives a clique or stable set of order at least
\[
 \frac{N}{xN+1}
   =\frac1{x+1/N}
   \geq\frac1{2x}
   \geq x^{-1/2}
   =n^{1/(6T)}
   \geq n^\theta,
\]
a contradiction.  In outcome (R2), failure of the claim forces
$k>\varepsilon^{-1}$; choosing one vertex from every block gives a
clique or stable set of order at least
\[
 \varepsilon^{-1}
       =n^{1/(21T^2)}
       \geq n^\theta.
\]
Finally, outcome (R3) gives a clique or stable set of order at least
\[
 \begin{split}
 (x^T|L|)^S
  &\geq(x^{T+2/7}n)^S\\
  &=n^{S(1-(T+2/7)/(3T))}
   \geq n^{S/2}
   \geq n^\theta,
 \end{split}
\]
because
\[
 1-\frac{T+2/7}{3T}
       =\frac23-\frac{2}{21T}\geq\frac12.
\]
This proves the claim.

Apply the polynomial R\"odl conversion stated at the beginning of the
proof with $r=T$ and parameter $\varepsilon$.  We obtain an
$\varepsilon$-restricted induced subgraph $Q$ with
\[
 |Q|\geq\varepsilon^{3T}n
      =\varepsilon^{-21T^2+3T}
      \geq\varepsilon^{-1}.
\]
After complementing if necessary, the greedy bound gives a
clique or stable set of order at least
\[
 \frac1{\varepsilon+1/|Q|}
   \geq\frac1{2\varepsilon}
   \geq\varepsilon^{-1/2}
   =n^{1/(42T^2)}
   \geq n^\theta,
\]
the final contradiction.  Hence every
$\overline{\mathcal F}$-free graph has the claimed polynomial
clique-or-stable-set bound, and complementation gives the
Erd\H{o}s--Hajnal property for $\mathcal F$.
\end{proof}

\subsection{From the quantitative comb condition to generalized niceness}

For the remainder of this section, fix a leaf-reducible family
$\mathcal F$ satisfying Definition~\ref{def:comb-hypothesis} with
constants $a,b,g,p,q$.  We now propagate that conclusion through all
five stages, following the strategy in~\cite{HuangJuZhou2026}.  Let $d,h$ be supplied by
Lemma~\ref{lem:hjz-leaf-expansion}, and define
\begin{align}
 A&=\max\{4d+1,4\},\notag\\
 B&=\max\left\{h,4,\frac{1}{3p}\right\},\qquad
 \zeta=2^{-4B},\notag\\
 U_0&=b+\frac{6}{g},&
 U_1&=b+\frac{7}{g},&
 U_2&=b+\frac{27A}{g},\notag\\
 P_0&=\frac{q+6}{p},&
 P_1&=P_0+\frac{1}{p},&
 P_2&=P_1+\frac{10(A+2)}{p},\notag\\
 P_*&=\max\{P_2,29A\},&
 K_*&=\min\{\zeta^{-3p},\zeta^{-1}\}.
\label{eq:hjz-modified-constants}
\end{align}
The choice of $B$ gives
\begin{equation}
                         K_*\geq16.
\label{eq:hjz-kstar}
\end{equation}

\begin{lemma}[First comb reduction]
\label{lem:hjz-modified-first}
Let $0<x\leq y\leq\zeta$, and let $J$ be a $y^3$-restricted
$\overline{\mathcal F}$-free graph.  At least one of the following
holds:
\begin{enumerate}
\item there are disjoint $X,Y\subseteq V(J)$ such that
\[
 |X|\geq y^A|J|,\qquad |Y|\geq(1-By)|J|,
\]
where $Y$ is $x$-sparse or complete to $X$;
\item $J$ has a $2y^4$-restricted induced subgraph of order at least
      $y^A|J|$;
\item $\operatorname{hom}(J)\geq(x^9|J|)^a$;
\item $J$ has a complete or anticomplete
\[
             (k,|J|/k^{U_0})\text{-blockade},
             \qquad k\geq y^{-g};
\]
\item $J$ has a pure
\[
             (k,|J|/k^{P_0})\text{-blockade},
             \qquad k\in[y^{-p},x^{-2}].
\]
\end{enumerate}
\end{lemma}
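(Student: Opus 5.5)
Since $J$ is $y^3$-restricted, either $J$ or $\overline J$ has maximum degree at most $y^3|J|$; I split into these two cases.

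\emph{Complement case.} Suppose $\overline J$ has maximum degree at most $y^3|J|\leq y|J|$. Since $\overline J$ is $\mathcal F$-free, I apply Lemma~\ref{lem:hjz-leaf-expansion} to $\overline J$ with parameter $y$ and exponent $s=4$ (more precisely $s$ chosen so that $y^{sd+1}\geq y^A$, e.g.\ $s=4$ gives $y^{4d+1}\geq y^A$). The first outcome yields, after complementing, $X,Y$ with $|X|\geq y^A|J|$, $|Y|\geq(1-hy)|J|\geq(1-By)|J|$, and $Y$ complete to $X$ in $J$; this is outcome~1. The second yields a $y^4$-restricted (so $2y^4$-restricted) induced subgraph of order at least $y^A|J|$, which is outcome~2.

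\emph{Sparse case.} Suppose $\Delta(J)\leq y^3|J|$. If $|J|<y^{-4}$, a single vertex is already outcome~2, because $y^A|J|\leq y^{4}|J|<1$ once $A\geq4$. Otherwise Lemma~\ref{lem:hjz-sparse-comb} applies, since $y\leq\zeta\leq2^{-8}$. Its first outcome gives outcome~1 directly, using $y^4\geq y^A$ and $4\leq B$. Its second gives outcome~2 with $J$ itself. In its third outcome we obtain an equipped $(\ell,w)$-comb with $\ell\in[y^{-1},x^{-2}]$ and $w=y^4|J|/\ell^2$. Here $\ell,w\geq4$ should follow from $y\leq\zeta$ and from $w$ being large relative to trivial cases; if $w<4$, outcome~2 again holds by a singleton, after a short check that $y^4/\ell^2\ll1$ forces $y^A|J|\leq1$, otherwise we absorb the case. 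I then apply the quantitative comb condition (Definition~\ref{def:comb-hypothesis}) to this comb.

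\emph{Converting the comb outcomes.} This is the main bookkeeping step. Since $\ell\leq x^{-2}$ and $y\geq x$, we have $w\geq y^4x^4|J|\geq x^8|J|$.
\begin{itemize}
\item Outcome~(H) gives $\hom(J)\geq w^a\geq(x^9|J|)^a$, which is outcome~3.
\item Outcome~(U) gives $k\geq\ell^g\geq y^{-g}$ and width $w/k^b=y^4|J|/(\ell^2k^b)$. From $\ell\leq k^{1/g}$ and $y\geq\ell^{-1}\geq k^{-1/g}$ we get $y^4\ell^{-2}\geq k^{-6/g}$, so the width is at least $|J|/k^{b+6/g}=|J|/k^{U_0}$, which is outcome~4.
\item Outcome~(P) gives $\ell^p\leq k\leq\ell$, hence $k\in[y^{-p},x^{-2}]$, with width $w/\ell^q=y^4|J|/\ell^{q+2}\geq|J|/\ell^{q+6}\geq|J|/k^{(q+6)/p}=|J|/k^{P_0}$, which is outcome~5.
\end{itemize}

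The delicate points are minor: verifying $w\geq4$ and the exact integer lengths. The lemma's proof is essentially this dispatch; I expect the only genuine care to be in the exponent comparisons, in particular using $y\geq\ell^{-1}$ to trade powers of $y$ for powers of $\ell$ and then of $k$.
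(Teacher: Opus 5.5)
Your dispatch matches the paper's proof. In the co-sparse case you use relative leaf expansion on $\overline J$ with exponent $4$; in the sparse case you use Lemma~\ref{lem:hjz-sparse-comb}. Your conversions of (H), (U), (P) use the same exponent comparisons and are correct.

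The gap is your treatment of the hypothesis $w\geq4$, which you need in order to invoke Definition~\ref{def:comb-hypothesis}. You suggest that $w<4$ forces $y^A|J|\leq1$, so that a singleton gives outcome~2. This is false. The statement of Lemma~\ref{lem:hjz-sparse-comb} allows $\ell$ as large as $x^{-2}$, and this is not bounded by any power of $y^{-1}$. For example, take $y=\zeta$, $x$ tiny, $\ell\approx x^{-2}$ and $|J|\approx x^{-3}$. Then $w=y^4|J|/\ell^2\approx\zeta^4x<4$, while $y^A|J|\approx\zeta^Ax^{-3}\gg1$, so no singleton argument applies.

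The correct absorber is outcome~3, using a bound you already derived. You have $w\geq x^8|J|$, so $w<4$ gives $x^9|J|<4x<1$, and hence $\operatorname{hom}(J)\geq1\geq(x^9|J|)^a$. The paper does exactly this up front: in the sparse case it first disposes of $|J|\leq x^{-9}$ via outcome~3. Since $x^{-9}\geq y^{-4}$, this also covers your case $|J|<y^{-4}$. After that, $w\geq x^8|J|>x^{-1}\geq4$ holds automatically.
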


\begin{proof}
If $\overline J$ has maximum degree at most $y^3|J|$, apply
Lemma~\ref{lem:hjz-leaf-expansion} to $\overline J$ with parameter $y$
and exponent $4$.  Recall that $\overline J$ is $\mathcal F$-free.
After complementation its first outcome gives outcome~(1), and its
second gives outcome~(2), because $A\geq4d+1$ and $B\geq h$.

We may therefore assume that $J$ has maximum degree at most
$y^3|J|$.  If $|J|\leq x^{-9}$, outcome~(3) holds.  Otherwise
\[
                  |J|>x^{-9}\geq y^{-4},
\]
so Lemma~\ref{lem:hjz-sparse-comb} applies.  Its first two outcomes
give outcomes~(1) and~(2), since $A\geq4$ and $B\geq4$.

In the remaining case there are
\[
 \ell\in[y^{-1},x^{-2}],\qquad
 w=\frac{y^4|J|}{\ell^2},
\]
and a comb to which Definition~\ref{def:comb-hypothesis} applies;
indeed, $\ell\geq y^{-1}\geq\zeta^{-1}>4$.  The parameter bounds imply
\begin{equation}
                       w\geq\frac{|J|}{\ell^6},
             \qquad    w\geq x^8|J|,
             \qquad    w\geq x^{-1}\geq4.
\label{eq:hjz-comb-width}
\end{equation}
Outcome (H) gives outcome~(3).  In outcome (U), since
$k\geq\ell^g$,
\[
 \frac{w}{k^b}
       \geq \frac{|J|}{\ell^6k^b}
       \geq \frac{|J|}{k^{b+6/g}}
       =\frac{|J|}{k^{U_0}},
 \qquad k\geq y^{-g}.
\]
In outcome (P), since $k\geq\ell^p$,
\[
 \frac{w}{\ell^q}
       \geq \frac{|J|}{\ell^{q+6}}
       \geq \frac{|J|}{k^{(q+6)/p}}
       =\frac{|J|}{k^{P_0}},
\]
and $k\in[y^{-p},x^{-2}]$.  These are outcomes~(4) and~(5).
\end{proof}

\begin{lemma}[One-block reduction]
\label{lem:hjz-modified-once}
Let $0<x\leq y\leq\zeta$, and let $J$ be a
$\zeta y^3$-restricted $\overline{\mathcal F}$-free graph.  At least
one of the following holds:
\begin{enumerate}
\item $J$ has an $x$-sparse or complete
\[
                 (y^{-1},y^{A+2}|J|)\text{-blockade};
\]
\item $J$ has a $2y^4$-restricted induced subgraph of order at least
      $y^{A+2}|J|$;
\item $\operatorname{hom}(J)\geq(x^{10}|J|)^a$;
\item $J$ has a complete or anticomplete
\[
            (k,|J|/k^{U_1})\text{-blockade},
            \qquad k\geq y^{-g};
\]
\item $J$ has a pure
\[
            (k,|J|/k^{P_1})\text{-blockade},
            \qquad k\in[y^{-p},x^{-2}].
\]
\end{enumerate}
\end{lemma}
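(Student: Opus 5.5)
The plan is to imitate the one-block step in the proof of Theorem~\ref{thm:hjz-generalized-nice}, using Lemma~\ref{lem:hjz-modified-first} in place of the first reduction and Lemma~\ref{lem:hjz-blockade-iteration} in place of the anticomplete iteration. First dispose of small graphs. If $|J|\leq y^{-(A+2)}$, a single vertex is trivially $2y^4$-restricted of order at least $y^{A+2}|J|$, so outcome~(2) holds. We therefore assume $|J|>y^{-(A+2)}$, which is the size hypothesis of Lemma~\ref{lem:hjz-blockade-iteration}.

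Next, split according to whether every induced subgraph $L$ of $J$ with $|L|\geq\zeta|J|$ contains disjoint $X,Y$ with $|X|\geq y^A|L|$, $|Y|\geq(1-By)|L|$, and $Y$ either $x$-sparse or complete to $X$. If so, apply Lemma~\ref{lem:hjz-blockade-iteration} with the given $B\geq4$, $\zeta=2^{-4B}$, $A>1$ (indeed $A\geq4$) and $y\leq\zeta$. It yields an $x$-sparse or complete $(y^{-1},y^{A+2}|J|)$-blockade, which is outcome~(1).

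Otherwise fix $L$ with $|L|\geq\zeta|J|$ and no such pair. Since $|L|\geq\zeta|J|$ and $J$ is $\zeta y^3$-restricted, the maximum degree of $J$ or of $\overline J$ is at most $\zeta y^3|J|\leq y^3|L|$, and this passes to $L$. Thus $L$ is $y^3$-restricted, and Lemma~\ref{lem:hjz-modified-first} applies to $L$ with the same $x,y$. Its outcome~(1) is excluded by the choice of $L$. The remaining outcomes transfer to $J$ by losing one factor of $\zeta$.
\begin{itemize}
\item[] Outcome~(2) gives a $2y^4$-restricted subgraph of order at least $y^A\zeta|J|\geq y^{A+1}|J|\geq y^{A+2}|J|$, using $y\leq\zeta$.
\item[] Outcome~(3) gives $\operatorname{hom}(J)\geq(x^9\zeta|J|)^a\geq(x^{10}|J|)^a$, using $x\leq\zeta$.
\item[] Outcome~(4) gives width $|L|/k^{U_0}\geq\zeta|J|/k^{U_0}$. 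Because $k\geq y^{-g}\geq\zeta^{-g}$, we have $\zeta\geq k^{-1/g}$, so the width is at least $|J|/k^{U_0+1/g}=|J|/k^{U_1}$.
\item[] Outcome~(5) has $k\geq y^{-p}\geq\zeta^{-p}$, so $\zeta\geq k^{-1/p}$, and the width is at least $|J|/k^{P_0+1/p}=|J|/k^{P_1}$. The range $k\in[y^{-p},x^{-2}]$ is unchanged.
\end{itemize}

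No step looks like a serious obstacle. The only points to check are the restrictedness transfer from $J$ to $L$, which is exactly why the hypothesis carries the factor $\zeta y^3$, and the absorption of $\zeta$ into the exponents, which is why the constants define $U_1=U_0+1/g$ and $P_1=P_0+1/p$.
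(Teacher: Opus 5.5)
Your proposal is correct and follows the paper's proof step for step: the small-graph singleton case, the split on whether every $L$ with $|L|\geq\zeta|J|$ has the sparse-or-complete pair (Lemma~\ref{lem:hjz-blockade-iteration} giving outcome~(1)), the restrictedness transfer to $L$, and the application of Lemma~\ref{lem:hjz-modified-first} to $L$. The only cosmetic difference is how you absorb the lost factor. You use $\zeta\geq k^{-1/g}$ (resp.\ $k^{-1/p}$) directly. The paper first weakens $|L|\geq\zeta|J|$ to $|L|\geq y|J|$ and then uses $y\geq k^{-1/g}$ (resp.\ $k^{-1/p}$).
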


\begin{proof}
If $|J|\leq y^{-(A+2)}$, a singleton gives outcome~(2).  We may
therefore assume the reverse inequality.

If every induced subgraph $L$ with $|L|\geq\zeta|J|$ has the pair in
outcome~(1) of Lemma~\ref{lem:hjz-modified-first}, then
Lemma~\ref{lem:hjz-blockade-iteration} gives outcome~(1).  Otherwise
choose an induced $L\subseteq J$, with $|L|\geq\zeta|J|$, having no
such pair.  Since $y\leq\zeta$,
\begin{equation}
                             |L|\geq y|J|.
\label{eq:hjz-once-loss}
\end{equation}
Moreover, the restrictedness of $J$ and the lower bound on $|L|$
show that $L$ is $y^3$-restricted.

Apply Lemma~\ref{lem:hjz-modified-first} to $L$.  Its first outcome is
excluded.  Its second outcome has order at least
$y^A|L|\geq y^{A+2}|J|$.  Its third gives
\[
                  (x^9|L|)^a\geq(x^{10}|J|)^a.
\]
In its fourth outcome, $k\geq y^{-g}$, so
$y\geq k^{-1/g}$, and
\[
 \frac{|L|}{k^{U_0}}
      \geq \frac{y|J|}{k^{U_0}}
      \geq \frac{|J|}{k^{U_0+1/g}}
      =\frac{|J|}{k^{U_1}}.
\]
Likewise, in its fifth outcome, $k\geq y^{-p}$ gives
\[
 \frac{|L|}{k^{P_0}}
      \geq \frac{|J|}{k^{P_0+1/p}}
      =\frac{|J|}{k^{P_1}}.
\]
This proves all five alternatives.
\end{proof}

\begin{lemma}[Iterative sparsification]
\label{lem:hjz-modified-iterative}
Let $0<x\leq\zeta^{10}$, and let $J$ be a
$\zeta^{10}$-restricted $\overline{\mathcal F}$-free graph.  At least
one of the following holds:
\begin{enumerate}
\item $J$ has an $x$-restricted induced subgraph of order at least
      $x^{22A}|J|$;
\item $\operatorname{hom}(J)\geq(x^{30A}|J|)^a$;
\item $J$ has a complete or anticomplete
\[
                    (k,|J|/k^{U_2})\text{-blockade},
                    \qquad k\geq2;
\]
\item $J$ has an $x$-sparse or pure
\[
                    (t,|J|/t^{P_*})\text{-blockade},
                    \qquad t\in[K_*,x^{-2}].
\]
\end{enumerate}
\end{lemma}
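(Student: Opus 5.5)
The plan mirrors the iterative reduction inside the proof of Theorem~\ref{thm:hjz-generalized-nice}, with Lemma~\ref{lem:hjz-modified-once} as the one-step engine and Lemma~\ref{lem:hjz-restricted-iteration} as the driver. Suppose that none of outcomes~(1)--(4) holds. Take $\lambda\in[x,\zeta^{10}]$ and a $\lambda$-restricted induced subgraph $L\subseteq J$ with $|L|\geq\lambda^{b_2}|J|$, where $b_2$ is a fixed exponent chosen below. Put $y=(\lambda/\zeta)^{1/3}$, so that $\lambda=\zeta y^3$ and $x\leq y\leq\zeta$; the latter uses $\lambda\leq\zeta^{10}\leq\zeta^4$. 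Also $\lambda\geq y^{4}$ roughly, since $\zeta\geq y$. We then apply Lemma~\ref{lem:hjz-modified-once} to $L$ with the pair $(x,y)$, and the goal is to show that every outcome except its second contradicts our assumptions.

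Outcome~(1) of Lemma~\ref{lem:hjz-modified-once} is an $x$-sparse or complete $(y^{-1},y^{A+2}|L|)$-blockade. We truncate it to $t$ blocks with $t\in[K_*,x^{-2}]$; this is possible because $y^{-1}\geq\zeta^{-1}\geq K_*$, and we truncate at $\min\{\lceil y^{-1}\rceil,x^{-2}\}$. The width is at least $y^{A+2}\lambda^{b_2}|J|\geq|J|/t^{P_*}$ once $P_*\geq(A+2)+4b_2$, because $\lambda\geq y^4\geq t^{-4}$ and $y\geq t^{-1}$. This gives outcome~(4). Outcome~(3) gives $(x^{10}|L|)^a\geq(x^{30A}|J|)^a$, which is outcome~(2), provided $b_2\leq 20A-10$. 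For outcome~(4), with $k\geq y^{-g}$, we convert the factor $\lambda^{b_2}\geq y^{4b_2}\geq k^{-4b_2/g}$, which yields exponent $U_1+4b_2/g\leq U_2=b+27A/g$ if $b_2\leq 5A$. Outcome~(5) is pure with $k\in[y^{-p},x^{-2}]$. Here $k\geq y^{-p}\geq\zeta^{-p}$ may fall short of $K_*$, and this is why $K_*=\min\{\zeta^{-3p},\zeta^{-1}\}$ and $B\geq 1/(3p)$ appear. We have $y^{-p}=(\zeta/\lambda)^{p/3}\geq\zeta^{-3p}$ because $\lambda\leq\zeta^{10}$, so $k\geq K_*$, and the width loss $\lambda^{b_2}\geq k^{-4b_2/p}$ gives exponent $P_1+4b_2/p\leq P_2$ if $b_2\leq 2.5(A+2)$. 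That is outcome~(4).

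Hence the second outcome of Lemma~\ref{lem:hjz-modified-once} must hold: $L$ contains a $2y^4$-restricted induced subgraph of order at least $y^{A+2}|L|$. We write this in terms of $\lambda$. From $y^3=\lambda/\zeta\leq\lambda^{0.9}$ we get $2y^4\leq\lambda^{b_1}$ with $b_1=1.1$, say, after absorbing the factor $2$ and $\zeta$ using $\lambda\leq\zeta^{10}$. From $y\geq\lambda^{1/3}$ we get the order bound $\lambda^{(A+2)/3}|L|$, so $b_3=(A+2)/3$. We then choose $b_2$ with $b_1b_2\geq b_2+b_3$, namely $b_2=10b_3$, and check that this is compatible with the bounds $b_2\leq5A$ and $b_2\leq2.5(A+2)$ needed above. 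The main obstacle is this bookkeeping; if the constants do not close, we would lower the base by choosing $y=\lambda^{1/4}$ instead. Lemma~\ref{lem:hjz-restricted-iteration}, applied with $c=\zeta^{10}$ and $J$ itself as the starting graph, then yields an $x$-restricted induced subgraph of order at least $x^{b_1b_2}|J|\geq x^{22A}|J|$. This is outcome~(1), a contradiction. Boundary cases, such as small $|L|$, are absorbed by the singleton case of Lemma~\ref{lem:hjz-modified-once}.
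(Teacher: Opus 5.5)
Your architecture matches the paper's: Lemma~\ref{lem:hjz-modified-once} serves as the one-step engine and Lemma~\ref{lem:hjz-restricted-iteration} as the driver. The substitution $\lambda=\zeta y^3$ is also legitimate: it gives $x\le y\le\zeta$ and $\lambda\ge y^4$, and your four conditions on $b_2$ are correctly derived. The gap is the bookkeeping you postponed, and it fails for the constants you commit to. With $b_1=11/10$ and $b_3=(A+2)/3$, the iteration condition $b_1b_2\ge b_2+b_3$ forces $b_2\ge10b_3=\tfrac{10}{3}(A+2)$. Your own pure outcome~(5) requires $b_2\le\tfrac52(A+2)$. Sharpening $\lambda\ge y^4$ to $\lambda=\zeta y^3\ge y^{10/3}$ (valid since $y\le\zeta^3$) only relaxes this to $b_2\le3(A+2)$, which is still violated. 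For small $p$ one has $P_*=P_2$, so the increment $10(A+2)/p$ in $P_2$ is the only slack, and the check fails. Your fallback $y=\lambda^{1/4}$ is not viable either. With it, the second outcome of Lemma~\ref{lem:hjz-modified-once} is only $2y^4=2\lambda$-restricted, so restrictedness does not improve and no $b_1>1$ exists. The parameter $\lambda$ must be at most $\zeta y^3$, so that the lemma applies, and polynomially larger than $y^4$, so that there is any gain.

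Within your parametrisation the fix is to enlarge $b_1$. Take $b_1=7/6$. The inequality $2y^4=2(\lambda/\zeta)^{4/3}\le\lambda^{7/6}$ is equivalent to $2\lambda^{1/6}\le\zeta^{4/3}$, which holds because $\lambda\le\zeta^{10}$ and $\zeta\le1/8$. Then $b_2=6b_3=2(A+2)$ satisfies $b_2\le\tfrac52(A+2)$, $b_2\le5A$, $A+2+4b_2\le29A$ and $10+b_2\le30A$. The final order is $x^{b_1b_2}|J|=x^{7(A+2)/3}|J|\ge x^{22A}|J|$.

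The paper instead writes $\lambda=y^{10/3}$ with $y\in[x,\zeta^3]$, $b_1=11/10$, $b_2=3(A+2)$ and $b_3=3(A+2)/10$. Then $|L|\ge\lambda^{b_2}|J|$ reads $|L|\ge y^{10(A+2)}|J|$, which matches the increment in $P_2$ exactly, and the iteration runs down to $x^{10/3}$.

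One further point is missing from your proposal. In outcome~(4) of Lemma~\ref{lem:hjz-modified-once} one may have $1<k<2$ when $g$ is small. You need the remark that the blockade then still has at least two blocks, each of order at least $|J|/k^{U_2}>|J|/2^{U_2}$, which is what the target's outcome~(3) with $k=2$ requires.
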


\begin{proof}
Put
\[
                              s=10(A+2).
\]
Suppose that none of the four outcomes holds.  We first prove that, for
every $y\in[x,\zeta^3]$ and every $y^{10/3}$-restricted induced
subgraph $L$ satisfying
\begin{equation}
                              |L|\geq y^s|J|,
\label{eq:hjz-iterative-L}
\end{equation}
there is a $y^{11/3}$-restricted induced subgraph of $L$ of order at
least $y^{A+2}|L|$.

Indeed, $y^{10/3}\leq\zeta y^3$, so
Lemma~\ref{lem:hjz-modified-once} applies to $L$.  Its first outcome,
with $t=y^{-1}$, has width at least
\[
 y^{A+2}|L|
   \geq y^{11(A+2)}|J|
   \geq \frac{|J|}{t^{29A}},
\]
because $11(A+2)\leq29A$.  Moreover
$t\in[\zeta^{-1},x^{-2}]$.  This is outcome~(4), since a complete
blockade is pure.

The second outcome gives the asserted restricted subgraph, because
$y^{1/3}\leq\zeta<1/2$ and hence
$2y^4\leq y^{11/3}$.  The third outcome implies outcome~(2), since
\[
             x^{10}|L|\geq x^{10+s}|J|
                       \geq x^{30A}|J|.
\]
In the fourth outcome, $k\geq y^{-g}>1$.  If $k\geq2$, then
\[
 \frac{|L|}{k^{U_1}}
      \geq \frac{y^s|J|}{k^{U_1}}
      \geq \frac{|J|}{k^{U_1+s/g}}
      \geq \frac{|J|}{k^{U_2}},
\]
where the last inequality follows from
$U_1+s/g=b+(10A+27)/g\leq b+27A/g=U_2$, since $A\geq4$.
Thus outcome~(3) holds.  Suppose instead that $1<k<2$.  The actual
number of blocks is an integer at least $k$, and hence is at least two.
Moreover, $k\geq y^{-g}$ and $k<2$ imply
\[
                         y^s>2^{-s/g}.
\]
Retaining two blocks, each retained block has order at least
\[
 \frac{|L|}{k^{U_1}}
   \geq \frac{y^s|J|}{k^{U_1}}
   > \frac{2^{-s/g}|J|}{2^{U_1}}
   = \frac{|J|}{2^{U_1+s/g}}
   \geq \frac{|J|}{2^{U_2}}.
\]
Thus outcome~(3) again holds, now with parameter $2$.

Finally, the fifth outcome satisfies
\[
 \frac{|L|}{k^{P_1}}
      \geq \frac{|J|}{k^{P_1+s/p}}
      =\frac{|J|}{k^{P_2}},
\]
and
\[
              k\in[\zeta^{-3p},x^{-2}].
\]
It therefore gives outcome~(4).  This proves the asserted
restricted-subgraph improvement.

Apply Lemma~\ref{lem:hjz-restricted-iteration} with
\[
 c=\zeta^{10},\qquad x_0=x^{10/3},\qquad
 b_1=\frac{11}{10},\qquad
 b_2=3(A+2),\qquad b_3=\frac{3(A+2)}{10}.
\]
Here $b_1b_2=b_2+b_3$.  The graph $J$ itself supplies the initial
$c$-restricted graph, and the preceding paragraph supplies the
inductive improvement after writing $\lambda=y^{10/3}$.  We obtain an
$x^{10/3}$-restricted induced subgraph of order at least
\[
 (x^{10/3})^{33(A+2)/10}|J|
       =x^{11(A+2)}|J|
       \geq x^{22A}|J|.
\]
Since $x^{10/3}\leq x$, this is outcome~(1), a contradiction.
\end{proof}

We next remove the fixed restrictedness hypothesis.  Choose any
$H_0\in\overline{\mathcal F}$.  By R\"odl's theorem
\cite{Rodl1986}, there is $\delta>0$ such that every
$\overline{\mathcal F}$-free graph contains a
$\zeta^{10}$-restricted induced subgraph of order at least
$\delta$ times its own order.  Choose an integer $D$ so large that
\begin{align}
 D&\geq\max\{8,U_2,2P_*\},&
 2^{-D}&<\zeta^{10},&
 2^{-D}&\leq\delta,                                      \label{eq:hjz-D-1}\\
 2^{-(D-U_2)}&\leq\delta,&
 2^{-(D-2P_*)}&\leq\delta\,3^{-P_*}.                     \label{eq:hjz-D-2}
\end{align}

\begin{lemma}[R\"odl reduction]
\label{lem:hjz-modified-rodl}
Let $0<x<2^{-D}$, and let $J$ be an
$\overline{\mathcal F}$-free graph with $|J|\geq x^{-D}$.  At least
one of the following holds:
\begin{enumerate}
\item $J$ has an $x$-restricted induced subgraph of order at least
      $x^{23A}|J|$;
\item $J$ has a pure or $x$-sparse
\[
                   (k,|J|/k^D)\text{-blockade},
                   \qquad k\in[2,x^{-1}];
\]
\item $\operatorname{hom}(J)\geq(x^{31A}|J|)^a$;
\item $J$ has a complete or anticomplete
\[
                   (k,|J|/k^D)\text{-blockade},
                   \qquad k\geq x^{-1}.
\]
\end{enumerate}
\end{lemma}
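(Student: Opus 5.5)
Apply R\"odl's theorem to pass to a $\zeta^{10}$-restricted induced subgraph $L\subseteq J$ with $|L|\geq\delta|J|$. Then run Lemma~\ref{lem:hjz-modified-iterative} on $L$ with the same parameter $x$; this is allowed because $x<2^{-D}<\zeta^{10}$. The real work is translating its four outcomes, whose bounds are relative to $|L|$, into the four outcomes here, relative to $|J|$. Every constraint on $D$ in \eqref{eq:hjz-D-1}--\eqref{eq:hjz-D-2} is designed to absorb the factor $\delta$.

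The first two translations are immediate. Outcome~(1) of Lemma~\ref{lem:hjz-modified-iterative} gives an $x$-restricted subgraph of order at least $x^{22A}|L|\geq x^{22A}\delta|J|$. Since $\delta\geq2^{-D}>x$ and $A\geq1$, this is at least $x^{23A}|J|$, which is outcome~(1). Outcome~(2) gives $\operatorname{hom}(J)\geq\operatorname{hom}(L)\geq(x^{30A}\delta|J|)^a\geq(x^{31A}|J|)^a$, which is outcome~(3).

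The blockade outcomes need a case split by the length $k$. Take first a complete or anticomplete $(k,|L|/k^{U_2})$-blockade with $k\geq2$.
\begin{itemize}
\item If $k\geq x^{-1}$, the width is at least $\delta|J|/k^{U_2}\geq 2^{-(D-U_2)}|J|/k^{U_2}\geq|J|/k^D$, using $k\geq2$ and \eqref{eq:hjz-D-2}. This is outcome~(4).
\item If $2\leq k<x^{-1}$, the same width bound holds. A complete or anticomplete blockade is pure, so this is outcome~(2) with $k\in[2,x^{-1}]$.
\end{itemize}

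Next take an $x$-sparse or pure $(t,|L|/t^{P_*})$-blockade with $t\in[K_*,x^{-2}]$. This is the main obstacle, because $t$ may exceed $x^{-1}$, while outcome~(2) requires $k\leq x^{-1}$. The plan is to keep only the first $k'=\lfloor\min\{t,x^{-1}\}\rfloor$ blocks; here $k'\geq2$ because $t\geq K_*\geq16$ and $x^{-1}\geq 2^D$. Subsequences of $x$-sparse or pure blockades keep that property, so only the width needs checking. Since $k'\geq\sqrt t/3$ (as $t\leq x^{-2}$), we have $t\leq 9k'^2$. Hence the width is at least
\[
\frac{\delta|J|}{t^{P_*}}\geq\frac{\delta|J|}{3^{2P_*}k'^{2P_*}}.
\]
This is at least $|J|/k'^{D}$ once $k'^{D-2P_*}\geq 9^{P_*}/\delta$. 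For $k'\geq2$ this follows from $2^{-(D-2P_*)}\leq\delta 3^{-P_*}$, provided $k'$ is somewhat large, which holds since $k'\geq 16$. The factor $3^{-P_*}$ versus $9^{-P_*}$ may need either the tighter estimate $t\leq 3k'^2$ or a slightly larger $D$; I would check this constant carefully, trying $\lfloor x^{-1}\rfloor\geq x^{-1}/2$ together with $t\leq x^{-2}$ to get $t\leq 4k'^2$. The hypothesis $|J|\geq x^{-D}$ is used only to keep the widths above one, so that the blocks are nonempty.
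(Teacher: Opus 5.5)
Your proposal is correct and is essentially the paper's proof: pass to a $\zeta^{10}$-restricted $L$ with $|L|\geq\delta|J|$, apply Lemma~\ref{lem:hjz-modified-iterative}, and translate the four outcomes exactly as you do. The only variation is in the last outcome, where the paper keeps $r=\lfloor\sqrt t\rfloor$ blocks, so that $t<(r+1)^2\leq3r^2$ matches the factor $3^{-P_*}$ in \eqref{eq:hjz-D-2} exactly, while you keep $\lfloor\min\{t,x^{-1}\}\rfloor$ blocks. Your worry about $9^{P_*}$ versus $3^{P_*}$ is unnecessary: since $\delta\leq1$, squaring \eqref{eq:hjz-D-2} gives $k'^{D-2P_*}\geq4^{D-2P_*}\geq9^{P_*}/\delta^2\geq9^{P_*}/\delta$ for $k'\geq4$, so $D$ needs no change.
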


\begin{proof}
Choose a $\zeta^{10}$-restricted induced subgraph
$L\subseteq J$ with $|L|\geq\delta|J|$, and apply
Lemma~\ref{lem:hjz-modified-iterative} to $L$.

Its first outcome has order at least
\[
 \delta x^{22A}|J|
      \geq x^{22A+1}|J|
      \geq x^{23A}|J|,
\]
and its second outcome gives
\[
 (\delta x^{30A}|J|)^a
      \geq(x^{31A}|J|)^a.
\]
Here we used $\delta\geq2^{-D}>x$ and $A\geq1$.

Suppose that its third outcome occurs.  By
\eqref{eq:hjz-D-2}, for every $k\geq2$,
\[
 \frac{\delta|J|}{k^{U_2}}\geq\frac{|J|}{k^D}.
\]
If $k\leq x^{-1}$ this gives outcome~(2), since a complete or
anticomplete blockade is pure; if $k\geq x^{-1}$ it gives
outcome~(4).

It remains to consider its fourth outcome.  Let
\[
                              r=\lfloor\sqrt t\rfloor.
\]
By \eqref{eq:hjz-kstar}, $r\geq4$, and $r\leq x^{-1}$.  Retain any
$r$ blocks.  Since
\[
                         t<(r+1)^2\leq3r^2,
\]
their width is at least
\[
 \frac{\delta|J|}{t^{P_*}}
       \geq\frac{\delta3^{-P_*}|J|}{r^{2P_*}}
       \geq\frac{|J|}{r^D},
\]
where the last inequality follows from
\eqref{eq:hjz-D-2} and $r\geq2$.  Truncation preserves purity and
$x$-sparsity, so outcome~(2) follows.
\end{proof}

\begin{lemma}[Quantitative comb reduction]
\label{lem:hjz-poly-length}
If $\mathcal F$ is leaf-reducible and satisfies
Definition~\ref{def:comb-hypothesis}, then $\mathcal F$ is
generalized nice.  Consequently, if $\mathcal F$ is also wonderful,
then $\mathcal F$ has the Erd\H{o}s--Hajnal property.
\end{lemma}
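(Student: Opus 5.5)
The plan is to deduce (GN1)--(GN4) from Lemma~\ref{lem:hjz-modified-rodl} together with Lemma~\ref{lem:hjz-pure-sparse-conversion}. The second assertion then follows at once from Theorem~\ref{thm:hjz-generalized-nice}.

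Fix an $\overline{\mathcal F}$-free graph $G$ and $0<\varepsilon<1/2$. Put $x=\varepsilon^{5D}$, where $D$ is the integer chosen in \eqref{eq:hjz-D-1}--\eqref{eq:hjz-D-2}; since $D\geq8$, we have $x<2^{-D}$. If $|G|<\varepsilon^{-10D^2}$, then a single vertex gives (GN2) once $c_3=10D^2$: indeed $\varepsilon^{c_3}|G|<1$, so any $c_4$ works. Assume now that $|G|\geq\varepsilon^{-10D^2}$.

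The first attempt is to check the hypothesis of Lemma~\ref{lem:hjz-pure-sparse-conversion} for $J=G$. Take any induced $L$ with $|L|\geq\varepsilon^D|G|$. Then $|L|\geq\varepsilon^{D-10D^2}\geq x^{-D}$; this follows because $10D^2-D\geq 5D^2$, so the exponent works out. Hence Lemma~\ref{lem:hjz-modified-rodl} applies to $L$. If its outcome~(2) holds for every such $L$, the conversion lemma gives an $(\varepsilon^{-1},\varepsilon^{10D^2}|G|)$-blockade whose blocks are pairwise complete or weakly $\varepsilon^D$-sparse. This is (GN1) with $c_1=10D^2\geq3$ and $c_2=D\geq8$.

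Otherwise some such $L$ has outcome~(1), (3) or~(4), and each is transferred to $G$ by paying a factor $\varepsilon^D$ in order:
\begin{itemize}
\item Outcome~(1) is an $x$-restricted induced subgraph of order at least $x^{23A}\varepsilon^D|G|$. This is (GN4) with $c_7=5D\geq4$ and $c_8=115AD+D$.
\item Outcome~(3) gives $\operatorname{hom}(G)\geq(x^{31A}\varepsilon^D|G|)^a$. This is (GN2) with $c_4=a$, after taking $c_3=\max\{10D^2,155AD+D\}$.
\item Outcome~(4) gives a complete or anticomplete $(k,|L|/k^D)$-blockade with $k\geq x^{-1}\geq\varepsilon^{-1}$. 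Since $\varepsilon\geq k^{-1}$, we get $|L|/k^D\geq\varepsilon^D|G|/k^D\geq |G|/k^{2D}$. This is (GN3) with $c_5=2D$ and $c_6=1$.
\end{itemize}
A blockade of non-integer length $k$ is read, as elsewhere in the paper, as having an integer number of blocks at least $k$.

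The main difficulty is the bookkeeping that keeps every constant independent of $\varepsilon$ and $G$ and meeting the required lower bounds $c_1\geq3$, $c_2\geq8$, $c_6\geq1$, $c_7\geq4$. In particular, the threshold $|L|\geq x^{-D}$ must follow from $|G|\geq\varepsilon^{-10D^2}$ for every $L$ tested by the conversion lemma, and the small-graph case has to be absorbed into (GN2). The logical shape is simple: either every large $L$ yields outcome~(2), or a single bad $L$ already gives one of (GN2)--(GN4). I will do these exponent comparisons explicitly at the end, using $0<\varepsilon<1/2$ and $A\geq4$.
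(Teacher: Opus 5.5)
Your proposal is correct and follows the paper's proof essentially step for step. You apply Lemma~\ref{lem:hjz-modified-rodl} to every induced $L$ with $|L|\geq\varepsilon^D|G|$, using $|L|\geq x^{-D}$, which follows from $|G|\geq\varepsilon^{-10D^2}$. You then convert outcomes (1), (3), (4) into (GN4), (GN2), (GN3), and otherwise pass outcome (2) to Lemma~\ref{lem:hjz-pure-sparse-conversion} to obtain (GN1).

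The remaining differences are cosmetic. You put every graph with $|G|<\varepsilon^{-10D^2}$ into (GN2) by taking $c_3\geq10D^2$; the paper uses (GN2) only when $|J|\leq\varepsilon^{-1}$ and gets (GN1) from singleton blocks in the middle range. You also take $c_6=1$ where the paper takes $5D$. Finally, $x<2^{-D}$ holds because $\varepsilon<1/2$, not because $D\geq8$.
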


\begin{proof}
We verify generalized niceness with
\begin{align*}
 C_1&=10D^2,& C_2&=D,& C_3&=156AD,& C_4&=a,\\
 C_5&=2D,& C_6&=5D,& C_7&=5D,& C_8&=116AD.
\end{align*}
These constants satisfy the numerical lower bounds in the definition
of generalized niceness.

Let $0<\varepsilon<1/2$, let $J$ be an
$\overline{\mathcal F}$-free graph, and put
\[
                              x=\varepsilon^{5D}.
\]
Then $x<2^{-D}$.  If $|J|\leq\varepsilon^{-1}$, then
\[
       \operatorname{hom}(J)\geq1
          \geq(\varepsilon^{C_3}|J|)^{C_4},
\]
so (GN2) holds.  If
\[
       \varepsilon^{-1}\leq|J|
             \leq\varepsilon^{-10D^2},
\]
choose $\lceil\varepsilon^{-1}\rceil$ singleton blocks.  Their width is
at least $\varepsilon^{10D^2}|J|$, and every pair is either complete
or weakly $\varepsilon^D$-sparse.  Thus (GN1) holds.

We may therefore assume that
\begin{equation}
                             |J|\geq\varepsilon^{-10D^2}.
\label{eq:hjz-large-J}
\end{equation}
Let $L$ be any induced subgraph with
$|L|\geq\varepsilon^D|J|$.  From
\eqref{eq:hjz-large-J},
\[
 |L|\geq\varepsilon^{-10D^2+D}
          \geq\varepsilon^{-5D^2}=x^{-D},
\]
so Lemma~\ref{lem:hjz-modified-rodl} applies to $L$.

If its first outcome occurs, then $J$ has an
$\varepsilon^{5D}$-restricted induced subgraph of order at least
\[
 x^{23A}|L|
    \geq\varepsilon^{115AD+D}|J|
    \geq\varepsilon^{116AD}|J|,
\]
which is (GN4).  Its third outcome gives
\[
 \operatorname{hom}(J)
      \geq(x^{31A}|L|)^a
      \geq(\varepsilon^{156AD}|J|)^a,
\]
which is (GN2).  In its fourth outcome,
$k\geq x^{-1}=\varepsilon^{-5D}$, and
\[
 \frac{|L|}{k^D}
       \geq\frac{\varepsilon^D|J|}{k^D}
       \geq\frac{|J|}{k^{2D}},
\]
because
$k^{-D}\leq\varepsilon^{5D^2}\leq\varepsilon^D$.
This is (GN3).

Consequently, unless one of (GN2)--(GN4) already holds, every induced
subgraph $L$ of order at least $\varepsilon^D|J|$ has a pure or
$x$-sparse
\[
                 (k,|L|/k^D)\text{-blockade},
                 \qquad k\in[2,x^{-1}].
\]
Lemma~\ref{lem:hjz-pure-sparse-conversion} now gives an
\[
  (\varepsilon^{-1},x^{2D}|J|)
   =(\varepsilon^{-1},\varepsilon^{10D^2}|J|)
\]
blockade whose blocks are pairwise complete or weakly
$\varepsilon^D$-sparse.  This is (GN1), and generalized niceness is
proved.

If $\mathcal F$ is also wonderful, then
Theorem~\ref{thm:hjz-generalized-nice} gives the
Erd\H{o}s--Hajnal property.
\end{proof}

This section is applied only to the graph $F$.  It does not prove the
$E$-graph or Bird cases, which remain the results of Huang, Ju, and
Zhou~\cite{HuangJuZhou2026}.

\section{Handle selection and the comb conclusion}
\label{sec:main-comb}

Retain the notation $T=\overline F$ from
Section~\ref{sec:target-local}.
We work in a $T$-free graph containing a comb with handles $h_i$ and
teeth $B_i$, together with a vertex $r$ complete to every tooth and
anticomplete to every handle.  By definition, $h_i$ is complete to $B_i$
and anticomplete to every other tooth.

We first establish two induced-copy lemmas for configurations in the
auxiliary graph on the handles.

\subsection{Induced-copy lemmas for handle configurations}

\begin{lemma}[Diamond tooth cut]
\label{lem:pivot-diamond-cut}
After relabelling four comb indices as $0,1,2,3$, suppose that
$h_0,h_1,h_2,h_3$ induce a diamond, with centres
$h_0,h_1$ and nonadjacent tips $h_2,h_3$.  Then
\[
                   B_0\cup B_1
       \quad\hbox{is anticomplete to}\quad
                   B_2\cup B_3.
\]
\end{lemma}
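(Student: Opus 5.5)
The plan is a direct contradiction: assume some edge $xy$ with $x\in B_0\cup B_1$ and $y\in B_2\cup B_3$, and exhibit an induced copy of $T$ built from $x$, $y$ and the four handles, exactly as in the rectangle lemmas. The vertex $r$ will not be needed.

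\emph{Reduction by symmetry.} The diamond hypothesis is invariant under swapping the two centres $h_0,h_1$. It is also invariant under swapping the two tips $h_2,h_3$. So it suffices to treat $x\in B_0$ and $y\in B_2$.

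\emph{Adjacencies.} By the comb axioms, $x$ is adjacent to $h_0$ and nonadjacent to $h_1,h_2,h_3$. Likewise $y$ is adjacent to $h_2$ and nonadjacent to $h_0,h_1,h_3$. The handle edges are $h_0h_1,h_0h_2,h_0h_3,h_1h2,h_1h_3$, and $h_2h_3$ is the only handle nonedge. Hence the six vertices $x,y,h_0,h_1,h_2,h_3$ span exactly the eight edges
\[
 xy,\ xh_0,\ yh_2,\ h_0h_2,\ h_0h_3,\ h_0h_1,\ h_1h_2,\ h_1h_3 .
\]
The degree sequence is then $h_0:4$, $h_1:3$, $h_2:3$, $x:2$, $y:2$, $h_3:2$, which matches that of $T$.

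\emph{Explicit copy of $T$.} Guided by the degrees, the degree-four vertex $e$ should be $h_0$. Its unique nonneighbour $d$ is then $y$, whose neighbours $x,h_2$ must play $a,b$. I therefore take
\[
 (a,b,c,d,e,f)\longmapsto(h_2,x,h_1,y,h_0,h_3).
\]
The eight edges in \eqref{eq:T-edge-set} become
\[
 h_2h_1,\ h_2y,\ h_2h_0,\ xy,\ xh_0,\ h_1h_0,\ h_1h_3,\ h_0h_3 ,
\]
which is precisely the list of all eight edges present. So the seven pairs in \eqref{eq:T-nonedge-set} are automatically nonedges, though I would still list them explicitly in the paper's style. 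This gives an induced $T$, contradicting $T$-freeness.

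The only delicate step is making sure no additional edge appears among the six vertices. This is guaranteed because the comb axioms fix every handle–tooth adjacency, the diamond fixes the handle graph, and the single tooth–tooth pair $xy$ is the assumed edge. I expect no real obstacle beyond this bookkeeping.
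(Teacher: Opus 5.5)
Your proof is correct and is essentially the paper's argument. It uses the same symmetry reduction to an edge between $B_0$ and $B_2$, and exactly the same map $(a,b,c,d,e,f)\mapsto(h_2,x,h_1,y,h_0,h_3)$ onto $T$, with $r$ unused; your observation that the eight image edges exhaust all edges among the six vertices is a tidy way to certify the seven nonedges at once.
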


\begin{proof}
By symmetry it is enough to exclude an edge $z_0z_2$ with
$z_0\in B_0,z_2\in B_2$.  If this edge exists, then
\begin{equation}
 (a,b,c,d,e,f)\longmapsto(h_2,z_0,h_1,z_2,h_0,h_3)
 \label{eq:pivot-diamond-witness}
\end{equation}
sends the eight edges of $T$ to
\[
 h_2h_1,h_2z_2,h_2h_0,z_0z_2,z_0h_0,
 h_1h_0,h_1h_3,h_0h_3,
\]
and its seven nonedges to
\[
 h_2z_0,h_2h_3,z_0h_1,z_0h_3,h_1z_2,z_2h_0,z_2h_3.
\]
All signs follow from the comb incidences and the unique missing edge
of the diamond.  Thus \eqref{eq:pivot-diamond-witness} is an induced
$T$, a contradiction.
\end{proof}

\begin{lemma}[Clique-handle propagation]
\label{lem:pivot-clique-propagation}
If $h_ih_j$ and $xy$ are edges, where $x\in B_i,y\in B_j$, then for
every $z$ in a third tooth,
\begin{equation}
                         xz\in E(G)\quad\Longleftrightarrow\quad
                         yz\in E(G).
 \label{eq:pivot-clique-propagation}
\end{equation}
\end{lemma}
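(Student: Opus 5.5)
The plan is to argue by contradiction and exhibit an explicit induced copy of $T$, as in the earlier rectangle lemmas. By the symmetry between the roles of $i$ and $j$ (the hypotheses $h_ih_j\in E(G)$ and $xy\in E(G)$ are symmetric), it suffices to exclude the case $xz\in E(G)$, $yz\notin E(G)$. We will use the equipped vertex $r$, complete to every tooth and anticomplete to every handle; the handle $h_k$ of the third tooth $B_k$ will not be needed.

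Consider the six vertices $x,y,z,h_i,h_j,r$. The comb incidences and the equipping condition \eqref{eq:root-outside} determine every pair among them.
\begin{itemize}[label=--]
\item $h_i$ is adjacent to $x$, and nonadjacent to $y$ and $z$, because these lie in other teeth.
\item $h_j$ is adjacent to $y$, and nonadjacent to $x$ and $z$.
\item $r$ is adjacent to $x,y,z$, and nonadjacent to $h_i,h_j$.
\item $h_ih_j$ and $xy$ are edges by hypothesis, $xz$ is an edge, and $yz$ is a nonedge by the case assumption.
\end{itemize}
This gives exactly eight edges and seven nonedges, matching the counts in \eqref{eq:T-edge-set} and \eqref{eq:T-nonedge-set}.

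The degrees identify the embedding. The vertex $x$ has degree four with unique nonneighbour $h_j$, so $x\mapsto e$ and $h_j\mapsto d$. Then $y$ has degree three and $h_i$ has degree two, giving $y\mapsto a$ and $h_i\mapsto b$. The remaining assignments are $r\mapsto c$ and $z\mapsto f$. Thus the candidate map is
\[
 (a,b,c,d,e,f)\longmapsto(y,h_i,r,h_j,x,z).
\]
It sends the eight edges of \eqref{eq:T-edge-set} to
\[
 yr,\ yh_j,\ yx,\ h_ih_j,\ h_ix,\ rx,\ rz,\ xz,
\]
all of which are present. It sends the seven nonedges of \eqref{eq:T-nonedge-set} to
\[
 yh_i,\ yz,\ h_ir,\ h_iz,\ rh_j,\ h_jx,\ h_jz,
\]
all of which are absent. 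This is an induced $T$, contradicting that $G$ is $T$-free.

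The reverse implication, $yz\in E(G)$ with $xz\notin E(G)$, follows by the same map after interchanging $(i,x)$ with $(j,y)$. I do not expect a real obstacle here; the only care needed is to confirm that no used pair depends on the uncontrolled adjacencies. Neither edges inside teeth nor the pairs $h_ih_k$, $h_jh_k$ enter the argument, since $h_k$ is not used and $x,y,z$ lie in three distinct teeth.
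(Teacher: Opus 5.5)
Your proposal is correct and matches the paper's proof: the paper uses exactly the same embedding $(a,b,c,d,e,f)\mapsto(y,h_i,r,h_j,x,z)$, with the same lists of eight edges and seven nonedges. It also obtains the reverse implication by interchanging $i$ and $j$.
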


\begin{proof}
If $xz$ is an edge and $yz$ is not, then
\[
 (a,b,c,d,e,f)\longmapsto(y,h_i,r,h_j,x,z)
\]
sends the edge list of $T$ to
\[
 yr,yh_j,yx,h_ih_j,h_ix,rx,rz,xz
\]
and its nonedge list to
\[
 yh_i,yz,h_ir,h_iz,rh_j,h_jx,h_jz.
\]
The reverse implication follows by interchanging $i,j$.  Hence either
failure of \eqref{eq:pivot-clique-propagation} gives an induced $T$.
\end{proof}

\subsection{Selection in the auxiliary graph on the handles}

For a graph $Q$ and $x\in V(Q)$, write $N_Q[x]$ for its closed
neighbourhood in $Q$.

\begin{lemma}[Pivot--true-twin extraction]
\label{lem:pivot-twin-extraction}
Let $H$ be the graph induced by $\ell$ handles, where $\ell\geq16$.
At least one of the following holds.
\begin{enumerate}
 \item $H$ has a clique of order at least $\ell^{1/4}$;
 \item there is a set $I$ of at least $\ell^{1/4}/2$ handle indices
       such that, whenever $h_ih_j$ is an edge with $i,j\in I$, the
       whole interface $B_i$--$B_j$ is anticomplete.
\end{enumerate}
\end{lemma}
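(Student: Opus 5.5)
The plan is a degree split around a pivot vertex of $H$, using Lemma~\ref{lem:pivot-diamond-cut} as the only graph-specific input. Call an edge $h_ih_j$ of $H$ \emph{bad} if the interface $B_i$--$B_j$ is not anticomplete. Outcome~2 asks exactly for a set $I$ of at least $\ell^{1/4}/2$ indices spanning no bad edge. In particular, any stable set of $H$ of that size qualifies vacuously.

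\emph{Low-degree case.} Suppose first that every vertex of $H$ has degree less than $\ell^{1/2}$. A greedy stable set then has order at least $\ell/(\ell^{1/2}+1)\geq\ell^{1/2}/2\geq\ell^{1/4}/2$, and outcome~2 holds.

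\emph{High-degree case: the twin claim.} Otherwise choose a pivot $v$ with $|N_H(v)|\geq\ell^{1/2}$ and work inside $S=N_H(v)$. The key claim is the following: if $j,k\in S$ and $h_jh_k$ is a bad edge, then $j$ and $k$ are true twins in $H[S]$, that is,
\[
N_H[j]\cap S=N_H[k]\cap S.
\]
Suppose instead that some $m\in S$ is adjacent to $j$ but not to $k$ (the other direction is symmetric). Then $h_v,h_j,h_k,h_m$ induce a diamond. Its centres are $h_v$ and $h_j$: they are adjacent, since $j\in S$, and both are adjacent to $k$ and to $m$. Its tips are $h_k$ and $h_m$, which are nonadjacent. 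Lemma~\ref{lem:pivot-diamond-cut} then makes $B_v\cup B_j$ anticomplete to $B_k\cup B_m$. In particular $B_j$--$B_k$ is anticomplete, contradicting badness.

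\emph{High-degree case: finishing.} The true-twin relation on $H[S]$ is an equivalence relation, and each of its classes is a clique of $H$. If some class has order at least $\ell^{1/4}$, outcome~1 holds. Otherwise every class has fewer than $\ell^{1/4}$ members. Since $|S|\geq\ell^{1/2}$, there are more than $\ell^{1/4}$ classes. Let $I$ consist of one representative from each class. Any two members of $I$ are not twins in $H[S]$, so by the claim any edge between them is not bad, i.e.\ its interface is anticomplete. This gives outcome~2 with $|I|\geq\ell^{1/4}\geq\ell^{1/4}/2$.

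The main point to check is the twin claim: that the diamond in it has the correct centre/tip roles for Lemma~\ref{lem:pivot-diamond-cut}, which holds because the pivot $v$ is a common neighbour of everything in $S$. Beyond that, the argument only needs the routine degree arithmetic, and $\ell\geq16$ guarantees $\ell^{1/2}/2\geq\ell^{1/4}/2$ and that the sets involved are nonempty.
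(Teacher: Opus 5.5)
Your proposal is correct and follows the paper's proof essentially step for step. Both arguments use the same degree split with a greedy stable set, partition the pivot's neighbourhood into true-twin classes, and apply Lemma~\ref{lem:pivot-diamond-cut} to the same diamond with the pivot and one endpoint as centres. The differences are cosmetic and change nothing: you state the diamond step as the contrapositive ``bad edges join twins'', and you threshold class sizes at $\ell^{1/4}$ where the paper uses $\sqrt{m}$ with $m=|N_H(h_0)|$.
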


\begin{proof}
If $\Delta(H)<\sqrt\ell$, greedy stable-set selection gives a stable
set of order at least
\[
       \frac{\ell}{\Delta(H)+1}>\frac{\ell}{\sqrt\ell+1}
                                  \geq \frac{\sqrt\ell}{2}
                                  \geq \frac{\ell^{1/4}}{2}.
\]
It satisfies item~2 automatically.

We may therefore choose a handle $h_0$ with
$m=|N_H(h_0)|\geq\sqrt\ell$.  Put $Q=H[N_H(h_0)]$, and partition
$V(Q)$ by the equivalence relation
\[
                x\sim y\quad\Longleftrightarrow\quad
                N_Q[x]=N_Q[y].
\]
Every equivalence class is a clique: if $x\ne y$ have equal closed
neighbourhoods, then $x\in N_Q[y]$.  If one class has at least
$\sqrt m$ vertices, it is a clique in $H$ of order at least
$\sqrt m\geq\ell^{1/4}$, and item 1 holds.

Otherwise there are more than $\sqrt m$ classes.  Choose one handle
representative from every class, and let $I$ be their original comb
index set.
Then
\[
                       |I|>\sqrt m\geq\ell^{1/4}.
\]
Let $u,v\in I$ and suppose $h_uh_v\in E(Q)$.  Since these handles
represent different classes,
$N_Q[h_u]\ne N_Q[h_v]$.  Both closed neighbourhoods contain
$h_u,h_v$, so there is a handle $h_z\in
V(Q)\setminus\{h_u,h_v\}$ adjacent to exactly one of them; say
$h_zh_u$ is an edge and $h_zh_v$ is a nonedge.  The four handles
\[
                         h_0,h_u,h_v,h_z
\]
induce a diamond with centres $h_0,h_u$ and tips $h_v,h_z$.
Lemma~\ref{lem:pivot-diamond-cut} therefore makes $B_u$ anticomplete
to $B_v$.  This holds for every adjacent pair of representatives, so
item 2 follows.
\end{proof}

In item~2 of Lemma~\ref{lem:pivot-twin-extraction}, handle nonedges give
affine interfaces and handle edges give constant-zero interfaces.

\subsection{Components, anticomponents, and additive separation}

We use the following component--anticomponent extraction, with explicit
width bounds.

\begin{lemma}[Components and anticomponents]
\label{lem:pivot-carriers}
Let teeth $B_i$, $i\in I$, have order at least $w$, and let $L\geq2$
be an integer.
One of the following holds.
\begin{enumerate}
 \item $G$ has a complete or anticomplete blockade of length $L$ and
       width at least $w/L^3$;
 \item for every $i\in I$ there are
       $X_i\subseteq C_i\subseteq B_i$ such that $G[C_i]$ is connected,
       $G[X_i]$ is anticonnected, and
       \[
                         |X_i|\geq w/L^2.
       \]
\end{enumerate}
If, in addition, every handle edge on $I$ has an anticomplete tooth
interface, then in the second outcome every $X_i$--$X_j$ matrix is additively separable over $\mathbb F_2$:
\begin{equation}
 M_{ij}(x,y)=\lambda_i^j(x)\oplus\lambda_j^i(y)
                                      \oplus\gamma_{ij}.
 \label{eq:pivot-carrier-affine}
\end{equation}
\end{lemma}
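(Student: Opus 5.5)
The plan is to run a two-step extraction inside each tooth separately, using the standard fact quoted in the proof of Theorem~\ref{thm:hjz-generalized-nice}. That fact says that a graph whose components all have order less than $|Q|/L$ has an anticomplete $(L,|Q|/L^2)$-blockade, and dually a graph whose anticomponents are all small has a complete blockade.

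Fix $i\in I$. If every component of $G[B_i]$ has order less than $|B_i|/L$, greedily group components into at least $L$ disjoint unions, each of order at least $|B_i|/L-|B_i|/L^2\ge |B_i|/(2L)$. We take exactly $L$ of them. These groups are pairwise anticomplete, which gives outcome~1 with width at least $w/L^2\ge w/L^3$.

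Otherwise choose a component $C_i$ with $|C_i|\ge w/L$. Apply the dual argument to $G[C_i]$. If every anticomponent of $C_i$ has order less than $|C_i|/L$, we get a complete blockade of length $L$ and width at least $|C_i|/(2L)\ge w/(2L^2)$. This is at least $w/L^3$ when $L\ge2$, so it is again outcome~1. Otherwise take an anticomponent $X_i$ of $G[C_i]$ of order at least $|C_i|/L\ge w/L^2$. Then $G[X_i]$ is anticonnected, $G[C_i]$ is connected, and $X_i\subseteq C_i\subseteq B_i$. If some index triggers outcome~1 we stop; otherwise every $i\in I$ yields outcome~2. Some care is needed with the rounding, namely that the greedy grouping really produces $L$ groups. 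Merging components in order until each group exceeds $|B_i|/L$ gives blocks of order in $[|B_i|/L,2|B_i|/L)$, and hence more than $L/2$ of them. That count is too weak, so instead aim for threshold $|B_i|/(2L)$: each group then has order below $|B_i|/(2L)+|B_i|/L$, which yields at least $L$ groups, each of width at least $w/(2L)$. The widths stay within $w/L^3$.

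For the additive-separation statement, take $i\ne j$ in $I$. If $h_ih_j$ is an edge, the hypothesis makes $B_i$--$B_j$ anticomplete. Then $M_{ij}\equiv0$, which is \eqref{eq:pivot-carrier-affine} with zero functions and $\gamma_{ij}=0$. If $h_ih_j$ is a nonedge, apply Lemma~\ref{lem:f124-affine-cut} with $A=X_i$, which is anticonnected, and $C=C_j$, which is connected. This gives $M(a,c)=\lambda(a)\oplus\mu(c)\oplus\gamma$ on $X_i\times C_j$, and restricting to $X_j\subseteq C_j$ gives the claim.

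The representation is not symmetric in $i,j$: the lemma uses the anticonnected side $X_i$ and the connected carrier $C_j$. The argument above only uses the ordered pair once, and the resulting formula is symmetric in form, so this causes no difficulty. The main obstacle is therefore purely the width bookkeeping in the first step, not the separation, which follows directly from the rectangle lemmas already established.
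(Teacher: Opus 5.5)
Your route is the paper's. You take components of each tooth, then anticomponents of a large component $C_i$. For separability you use the parity argument on anticonnected rows $X_i$ against connected columns $C_j$ and restrict to $X_j$. Citing Lemma~\ref{lem:f124-affine-cut} with $A=X_i$, $C=C_j$ is exactly what the paper re-derives inline, so that half is fine.

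The step that fails as written is the greedy count, which you yourself single out as the main obstacle. Take threshold $t=|B_i|/(2L)$ with components in an arbitrary order. A completed union can then have order close to $t+|B_i|/L=3|B_i|/(2L)$: tiny components summing to just under $t$, followed by one of order just under $|B_i|/L$. So after $L-1$ unions you are only guaranteed a remainder exceeding $|B_i|/L-(L-1)t=(3-L)|B_i|/(2L)$, which is vacuous for $L\geq3$. Repeating that pattern yields only about $2L/3$ unions, not $L$. You reuse the same count for the complete blockade inside $C_i$. Your first figure, unions of order $|B_i|/L-|B_i|/L^2$, is not justified either.

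The repair is easy, because the lemma only asks for width $w/L^3$. Use threshold $|B_i|/L^2$, as the paper does. A completed union then has order below $|B_i|/L^2+|B_i|/L$, so after $L-1$ of them more than $|B_i|/L^2$ remains. This gives width $|B_i|/L^2\geq w/L^2$ for the anticomplete blockade and $|C_i|/L^2\geq w/L^3$ for the complete one, which is exactly where the exponent $3$ in the statement comes from. Equivalently, apply the anticomponent fact you quote at the start, together with its complement. If you want to keep width $|B_i|/(2L)$, process the components in nonincreasing order of size. Then every completed union has order less than $|B_i|/L$ and the leftover is less than $|B_i|/(2L)$, so at least $L$ unions result.
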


\begin{proof}
Partition a tooth into its connected components.  If none has order
at least $w/L$, greedily group whole components into $L$ unions, each
of order at least $w/L^2$.  Indeed, a completed union has order less
than $w/L^2+w/L$, and after $L-1$ such unions the unused total is still
greater than $w/L^2$.  The unions form an anticomplete blockade.
Otherwise retain a connected component $C_i$ of order at least $w/L$.

Inside $C_i$, apply the complementary argument to its anticomponents.
It either gives a complete $L$-blockade of width at least
$|C_i|/L^2\geq w/L^3$, or gives an anticomponent
$X_i$ of order at least $|C_i|/L\geq w/L^2$.  Carry this out in every
tooth unless an immediate blockade has already occurred.

It remains to prove \eqref{eq:pivot-carrier-affine}.  If $h_ih_j$ is an edge, the additional
hypothesis says that the interface is constant zero, which has the
form \eqref{eq:pivot-carrier-affine}.  Suppose $h_ih_j$ is a nonedge.  If $x,x'\in X_i$ are
nonadjacent and $y,y'\in C_j$ are adjacent, the rectangle
$\{x,x'\}\times\{y,y'\}$ cannot contain exactly one edge: after
relabelling, its witness is
\[
 (a,b,c,d,e,f)\longmapsto(x,x',y,h_i,r,y').
\]
It cannot contain exactly three edges either: if $x'y'$ is its unique
nonedge, a witness is
\begin{equation}
 (a,b,c,d,e,f)\longmapsto(x,x',y',h_i,y,h_j).
 \label{eq:pivot-carrier-three-edge-witness}
\end{equation}
The last nonedge required in
\eqref{eq:pivot-carrier-three-edge-witness} is precisely $h_ih_j$.
Thus every
such rectangle has even parity.  Join two rows by a path in
$\overline{G[X_i]}$ and two columns by a path in $G[C_j]$, and sum the
elementary parity equalities.  Every rectangle of the full
$X_i$--$C_j$ matrix has even parity.  Fixing one row and one column
gives the displayed additively separable representation; restricting the
columns from $C_j$ to $X_j$ proves \eqref{eq:pivot-carrier-affine}.
\end{proof}

The results of Sections~\ref{sec:all-mixed} and~\ref{sec:outer-graph}
give the following extraction.  Its hypotheses state explicitly where
handle nonadjacency is used.

\begin{corollary}[Extraction from additively separable interfaces]
\label{cor:pivot-affine-extraction}
There are constants $\rho,\sigma>0$ and an integer $m_0$ with the
following property.  In a $T$-free graph containing a comb and a vertex
complete to all teeth and anticomplete to all handles, let
$X_1,\ldots,X_m$ be anticonnected subsets of distinct
teeth, where $m\geq m_0$ and $|X_i|\geq W$ for every $i$.  Suppose
every pairwise interface has the form
\[
 M_{ij}(x,y)=\lambda_i^j(x)\oplus\lambda_j^i(y)
                         \oplus\gamma_{ij},
\]
and suppose $h_ih_j$ is a nonedge whenever the $X_i$--$X_j$ interface
is mixed.  Then one of the following holds:
\begin{enumerate}
 \item a pure blockade of length at least $m^\rho$ and width at least
       $W$;
 \item a complete or anticomplete blockade of length at least
       $m^\sigma$ and width at least $W/m$.
\end{enumerate}
In either outcome, the blocks lie in pairwise distinct sets $X_i$; in
particular, the actual length of the blockade is at most $m$.
\end{corollary}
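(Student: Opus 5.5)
The plan is to deduce this corollary directly from Theorem~\ref{thm:f124-affine-interface-extraction}, taking $m_0,\rho,\sigma$ to be the constants supplied there. The two statements have essentially the same hypotheses, so the work is to check that each hypothesis of the theorem holds in the present setting. Only the ambient framework of Section~\ref{sec:outer-graph} must be confirmed, namely a $T$-free graph with a comb equipped with $r$ and anticonnected $X_i\subseteq B_i$.

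Concretely, I relabel the $m$ distinct teeth containing $X_1,\ldots,X_m$ as the index set $I$ with $|I|=m\geq m_0$. The vertex $r$ is complete to every tooth and anticomplete to every handle, so every witness used in Section~\ref{sec:outer-graph} is available. The affine form of each interface is exactly \eqref{eq:outer-affine}. Handle nonadjacency for mixed interfaces is the stated hypothesis.

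The one point to be careful about is where the lemmas behind the theorem use handle nonadjacency. Lemma~\ref{lem:f124-three-edge-rectangle} and the all-mixed analysis of Section~\ref{sec:all-mixed} invoke $h_ih_j\notin E(G)$ only on mixed pairs. The remaining inputs are Lemmas~\ref{lem:f124-anti-propagation}, \ref{lem:f124-outer-p3-laws}, \ref{lem:f124-boundary-refinement}, \ref{lem:f124-outer-p4-one-two-sided} and Theorem~\ref{thm:f124-outer-no-odd-hole}. These use the witness \eqref{eq:outer-one-edge-witness}, which needs only $r$ and the handle of the row tooth. They also use maps such as \eqref{eq:outer-p4-witness}, whose only handle is $h_B$ together with comb incidences. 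So pure pairs with adjacent handles cause no problem.

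Having checked this, I apply Theorem~\ref{thm:f124-affine-interface-extraction} to obtain either a pure blockade of length at least $m^\rho$ and width at least $W$, or a complete or anticomplete blockade of length at least $m^\sigma$ and width at least $W/m$. In both cases the blocks are subsets of distinct $X_i$, which gives the final sentence of the corollary.

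The main (mild) obstacle is this verification that no step silently needs nonadjacent handles on pure pairs. If one did, I would restrict to the all-mixed clique $J$ from Corollary~\ref{cor:f124-outer-c5-extraction}, where the handles are nonadjacent by hypothesis, and note that the stable case requires no handle information.
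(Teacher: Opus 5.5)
Your proposal is correct and matches the paper, which proves the corollary in one line by citing Theorem~\ref{thm:f124-affine-interface-extraction}, whose hypotheses and constants are identical. Your extra audit of where handle nonadjacency is used is accurate but not needed: the theorem's own proof invokes handle nonadjacency only on the all-mixed clique $J$, exactly as your fallback describes.
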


\begin{proof}
This is Theorem~\ref{thm:f124-affine-interface-extraction}.  Its
hypotheses are exactly the hypotheses displayed above.
\end{proof}

\subsection{Clique handles}

We also use the following aggregation statement.

\begin{lemma}[Clique-handle extraction]
\label{lem:pivot-clique-extraction}
For $m\geq144$ clique handles with teeth of order at least $w$, there
is a complete or anticomplete blockade of length at least $\sqrt m/2$
and width at least $w/(2m)$.  For every $m\geq3$ there is a
pure pair of width $w/(4m)$.
\end{lemma}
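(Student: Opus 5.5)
The plan is to reduce the statement to the Multipartite aggregation lemma (Lemma~\ref{lem:f124-no-two-edge-aggregation}), with the transitivity hypothesis~\eqref{eq:multipartite-transitivity} supplied by Clique-handle propagation (Lemma~\ref{lem:pivot-clique-propagation}). Fix $m$ clique handles $h_1,\ldots,h_m$, so $h_ih_j$ is an edge for all distinct $i,j$. In each tooth choose a subset $V_i\subseteq B_i$ of the common integer order $W=\lceil w\rceil$ (or $\lfloor\,\cdot\,\rfloor$; since $|B_i|\geq w$ and $|B_i|$ is an integer, $|B_i|\geq\lceil w\rceil$, so $W\geq w$ is available). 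Let $R$ be the graph on $\bigcup_iV_i$ whose edges are exactly the cross-part edges of $G$, with no edges inside a part.

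Next I check~\eqref{eq:multipartite-transitivity}. Let $x\in V_i$, $y\in V_j$, $z\in V_k$ with $i,j,k$ distinct, and suppose $xy,yz\in E(R)$. Since $h_ih_j$ is an edge and $xy$ is an edge with $x\in B_i$, $y\in B_j$, Lemma~\ref{lem:pivot-clique-propagation} applied to the third tooth $B_k$ gives $xz\in E(G)\Leftrightarrow yz\in E(G)$. Since $yz$ is an edge, so is $xz$. This is exactly~\eqref{eq:multipartite-transitivity}. Here we use that every pair of handles is adjacent, so the lemma applies to the pair $(i,j)$ regardless of which two of the three parts carry the known edges.

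Lemma~\ref{lem:f124-no-two-edge-aggregation} then applies to $R$. For $m\geq144$ it gives a complete or anticomplete blockade in $Q$ of length at least $\sqrt m/2$ and width at least $W/(2m)\geq w/(2m)$. For every $m\geq3$ it gives a pure pair in $Q$ with sides in different parts, each of order at least $W/(4m)\geq w/(4m)$. Since all blocks produced lie in distinct parts, and on cross-part pairs $Q$ is exactly the complement of $G$, the translation noted after that lemma applies: a complete blockade in $Q$ is anticomplete in $G$, and an anticomplete blockade in $Q$ is complete in $G$. Likewise a pure pair in $Q$ is a pure pair in $G$.

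There is no real obstacle, since the heavy lifting sits in the two cited lemmas. The points to verify are the following.
\begin{itemize}
\item The blocks produced by the aggregation lemma never mix two parts within one block in a way that brings intra-tooth edges of $G$ into the cross-block relation. This holds because the lemma's blocks are unions of component slices inside a single part.
\item The equal-order normalisation $W$ is at least $w$.
\end{itemize}
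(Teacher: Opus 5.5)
Your proposal is correct and follows the paper's proof essentially step for step: trim each tooth to the common order $W=\lceil w\rceil$, let $R$ be the cross-tooth edges, obtain hypothesis (A) from Lemma~\ref{lem:pivot-clique-propagation}, apply Lemma~\ref{lem:f124-no-two-edge-aggregation}, and complement the cross signs. You also check explicitly that every block produced lies inside a single part, so that cross-block pairs are cross-part pairs; the paper leaves this implicit, and it is the right thing to verify.
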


\begin{proof}
First trim every tooth to the common integer order
$W=\lceil w\rceil$.  This is possible because an integer tooth order
at least $w$ is at least $\lceil w\rceil$, and $W\geq w$.  Discard the
edges internal to the teeth, and let
$R$ consist of the cross-tooth edges on the trimmed teeth.
Lemma~\ref{lem:pivot-clique-propagation} says that, for vertices
in three different teeth,
\begin{equation}
                  xy,yz\in E(R)\quad\Longrightarrow\quad xz\in E(R).
 \label{eq:clique-cross-transitivity}
\end{equation}
Let $Q$ be the complementary cross graph: vertices in different teeth
are adjacent in $Q$ exactly when they are nonadjacent in $R$.
Condition~\eqref{eq:clique-cross-transitivity} is precisely the
hypothesis of the no-two-edge aggregation lemma,
Lemma~\ref{lem:f124-no-two-edge-aggregation}.  It gives a pure
pair in $Q$ of width at least $W/(4m)\geq w/(4m)$, and, for $m\geq144$,
a complete or anticomplete blockade in $Q$ of length at least
$\sqrt m/2$ and width at least $W/(2m)\geq w/(2m)$.  Complementing the
cross signs turns either conclusion into the stated conclusion in the
host graph $G$.
\end{proof}

\subsection{Verification of the quantitative comb condition}

\begin{theorem}
\label{thm:F-comb-conclusion}
The singleton family $\{F\}$ satisfies the quantitative comb condition of
Definition~\ref{def:comb-hypothesis}.
\end{theorem}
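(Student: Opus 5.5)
We must exhibit constants $a,b,g,p,q$ such that every equipped $(\ell,w)$-comb in a $T$-free graph, $\ell,w\ge4$, yields (U) or (P); outcome (H) will only be used for bounded $\ell$. The plan is to chain the lemmas of this section and of Sections~\ref{sec:all-mixed}--\ref{sec:multipartite}, keeping every loss polynomial in $\ell$.

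First treat $\ell$ larger than a fixed threshold $\ell_0$, chosen so that $\ell_0\geq16$, $\ell_0^{1/8}/2\geq 144$, and the index sets produced below have size at least $m_0$ from Corollary~\ref{cor:pivot-affine-extraction}. Apply Lemma~\ref{lem:pivot-twin-extraction} to the handle graph.

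In the clique outcome, keep $m=\lceil\ell^{1/4}\rceil\geq144$ pairwise adjacent handles. Lemma~\ref{lem:pivot-clique-extraction} then gives a complete or anticomplete blockade of length $k\geq\sqrt m/2\geq\ell^{1/8}/2$ and width at least $w/(2m)\geq w/\ell$. Since $k\geq\ell^{1/9}$ for large $\ell$, we have $w/\ell\geq w/k^{9}$, which is (U) with $g=1/9$ and $b\geq9$.

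In the second outcome we have $|I|\geq\ell^{1/4}/2$, and every handle edge inside $I$ carries an anticomplete tooth interface. Apply Lemma~\ref{lem:pivot-carriers} with $L=\lceil\ell^{1/100}\rceil$.
\begin{itemize}
\item If its first alternative occurs, we get a blockade of length $L\geq\ell^{1/100}$ and width $w/L^3\geq w/L^{4}$ (for large $\ell$), which is (U).
\item Otherwise we obtain anticonnected $X_i$ with $|X_i|\geq w/L^2$, and all interfaces are additively separable. Mixed interfaces occur only over handle nonedges, since interfaces over handle edges are constant zero. So Corollary~\ref{cor:pivot-affine-extraction} applies with $m=|I|$ and $W=w/L^2$.
\end{itemize}

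The corollary has two outcomes.
\begin{itemize}
\item The pure outcome gives length $k\geq m^\rho\geq\ell^{\rho/5}$, $k\leq m\leq\ell$, and width $W\geq w/\ell^{1/50}$. This is (P) with $p=\rho/5$ and $q=1$. If the actual length exceeds $\ell$, truncate; that is impossible anyway, since $k\leq m$.
\item The complete/anticomplete outcome gives length $k\geq m^\sigma\geq\ell^{\sigma/5}$ and width $W/m\geq w/\ell^{2}$. Since $\ell\leq k^{5/\sigma}$, the width is at least $w/k^{10/\sigma}$, which is (U) with $g=\sigma/5$.
\end{itemize}

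Finally, take $g$ to be the minimum of the exponents above and $b$ the maximum, adjusting $b$ upward so that each case's inequality $w/\ell^{c}\geq w/k^{b}$ holds. Each individual case has a fixed polynomial relation between $k$ and $\ell$, so this is possible.

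For bounded $\ell$, that is $4\leq\ell<\ell_0$, we need (U) or (P) with $\ell^g\leq k$. Take $g$ small enough that $\ell_0^{g}\leq2$, so length $2$ suffices. Since $\ell\geq4\geq3$, the pure-pair part of Lemma~\ref{lem:pivot-clique-extraction} gives what we need in either case of the handle graph, after one intermediate step.
\begin{itemize}
\item Among four handles, Ramsey's theorem gives either three pairwise adjacent handles or two nonadjacent ones.
\item Three pairwise adjacent handles yield a pure pair of width $w/12\geq w/2^{b}$ for $b\geq4$, i.e.\ (U) with $k=2$.
\item Two nonadjacent handles $h_i,h_j$ call for Lemma~\ref{lem:pivot-carriers} with $L=2$, followed by the half-size pure pair from Lemma~\ref{lem:f124-affine-cut}. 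This gives a pure pair of width at least $w/8$, which again is (U) with $k=2$ once $b\geq3$.
\end{itemize}

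The main obstacle is the exponent bookkeeping. We must ensure that $k\leq\ell$ in (P), and that every (U) width loss, stated as a power of $\ell$, converts into a power of $k$ using a lower bound $k\geq\ell^{c}$ that holds uniformly. The bounded-$\ell$ range also needs care, since there $k=2$ may fall below $\ell^g$ unless $g$ is chosen after $\ell_0$. I would therefore fix $\ell_0$ from the large-$\ell$ inequalities first, and only then choose $g$ and $b$.
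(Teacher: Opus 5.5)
Your proposal is correct and follows the paper's proof essentially step for step: pivot--true-twin extraction, then either clique-handle aggregation or the component--anticomponent carriers feeding Corollary~\ref{cor:pivot-affine-extraction}, and finally a bounded-$\ell$ case with $k=2$ after choosing $g$ so that $\ell_0^g\leq2$. Your per-case exponents (combined by taking the minimum $g$ and maximum $b$) and your three-handle pure pair in the bounded clique case are only cosmetic variations. One small arithmetic slip: with $L=\lceil\ell^{1/100}\rceil$ you only get $W=w/L^2\geq w/(4\ell^{1/50})$ rather than $w/\ell^{1/50}$, but your choice $q=1$ absorbs this for large $\ell$.
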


\begin{proof}
Let $\rho,\sigma>0$ and $m_0$ be the constants in
Corollary~\ref{cor:pivot-affine-extraction}.  Decreasing $\rho$ if
necessary, assume that $0<\rho\leq1$.  Set
\[
 q_0=\frac{1}{8},\qquad p_0=\frac{\rho}{8},\qquad
 c_*=\min\left\{\frac{1}{16},\frac{\sigma}{8}\right\}.
\]
Choose an integer $\ell_0$ so large that, whenever $\ell\geq\ell_0$,
all of the following hold:
\begin{enumerate}
 \item $L=\lfloor\ell^{q_0}\rfloor\geq2$ and
       $L\geq\ell^{1/16}$;
 \item $\ell^{1/4}/2\geq m_0$;
 \item every clique of at least $\ell^{1/4}$ handles has order at
       least $144$;
 \item
 \[
  \frac{1}{2}\ell^{1/8}\geq\ell^{c_*},\qquad
  2^{-\rho}\ell^{\rho/4}\geq\ell^{p_0},\qquad
  2^{-\sigma}\ell^{\sigma/4}\geq\ell^{c_*};
  \]
\end{enumerate}

After fixing $\ell_0$, put
\[
 c=\min\left\{c_*,\frac{\log 2}{\log\ell_0}\right\},
 \qquad p=p_0,\qquad q=\frac{1}{4},
\]
and choose
\begin{equation}
 b\geq\max\left\{3,\frac{2}{c},
          \log_2\bigl(\max\{8,4\ell_0\}\bigr)\right\}.
 \label{eq:comb-b-choice}
\end{equation}
We verify Definition~\ref{def:comb-hypothesis} with
$a=1$, $g=c$, and these fixed $b,p,q$.  Outcome~(H) will not be needed.

Consider first $\ell\geq\ell_0$.  Apply
Lemma~\ref{lem:pivot-twin-extraction}.  Suppose it gives a clique of
$m\geq\ell^{1/4}$ handles.  By
Lemma~\ref{lem:pivot-clique-extraction}, there is a complete or
anticomplete blockade of length at least
$\sqrt m/2\geq\ell^{1/8}/2$ and width at least
\[
                         \frac{w}{2m}\geq \frac{w}{2\ell}.
\]
Keep exactly $k=\lceil\ell^c\rceil$ of its blocks.  By the choice of
$\ell_0$, its integer length is at least
$\lceil\ell^{c_*}\rceil\geq k$, so this is possible; also
$k\geq\ell^c$.  Since
$b c\geq2$, we have $k^b\geq\ell^{bc}\geq\ell^2\geq2\ell$, and
therefore the retained width is at least $w/k^b$.  This is outcome (U).

We may thus assume that Lemma~\ref{lem:pivot-twin-extraction} gives a
set $I$ satisfying
\[
                         |I|\geq\ell^{1/4}/2,
\]
and such that every handle edge on $I$ has an anticomplete tooth
interface.  Apply Lemma~\ref{lem:pivot-carriers} with
$L=\lfloor\ell^{1/8}\rfloor$.  If its first conclusion holds, it
gives a complete or anticomplete blockade with $k=L\geq\ell^c$ blocks
and width at least $w/L^3\geq w/k^b$, because $b\geq3$.  This is again
outcome (U).

In the other conclusion, there are
$X_i\subseteq C_i\subseteq B_i$, $i\in I$, such that $G[C_i]$ is
connected, $G[X_i]$ is anticonnected, and
\[
                         |X_i|\geq \frac{w}{L^2}
                                  \geq \frac{w}{\ell^{1/4}}.
\]
Every interface $X_i$--$X_j$ is additively separable over
$\mathbb F_2$.  On a handle edge the interface is constant zero; hence
every mixed interface lies over a handle nonedge.  We may therefore
apply Corollary~\ref{cor:pivot-affine-extraction} with
$m=|I|$ and $W=w/\ell^{1/4}$.

In its pure conclusion, the length $k$ satisfies
\[
 k\geq |I|^\rho
       \geq2^{-\rho}\ell^{\rho/4}
       \geq\ell^p,
 \qquad k\leq |I|\leq\ell,
\]
and the width is at least $w/\ell^{1/4}=w/\ell^q$.  This is outcome
(P).

In its complete-or-anticomplete conclusion, the length is at least
\[
 |I|^\sigma\geq2^{-\sigma}\ell^{\sigma/4}
                   \geq\ell^{c_*}\geq\ell^c,
\]
and the width is at least
\[
 \frac{W}{|I|}\geq \frac{w}{\ell^{5/4}}.
\]
Keep $k=\lceil\ell^c\rceil$ blocks.  These blocks are available because
the original integer length is at least
$\lceil\ell^{c_*}\rceil\geq k$.  Also $k^b\geq\ell^2$, so
$w/\ell^{5/4}\geq w/k^b$.  This is outcome (U).  This proves the
required conclusion for $\ell\geq\ell_0$.

It remains to consider $4\leq\ell<\ell_0$.  If two handles are
nonadjacent, apply Lemma~\ref{lem:pivot-carriers} with $L=2$ to their
two teeth.  Its first conclusion gives a complete or anticomplete pair
of width at least $w/8$.  In its second conclusion the two retained
sets have order at least $w/4$, their interface is additively separable,
and a largest fibre of each of the two binary endpoint functions gives
a complete or anticomplete pair of width at least $w/8$.  If every pair
of handles is adjacent, Lemma~\ref{lem:pivot-clique-extraction} gives a
complete or anticomplete pair of width at least $w/(4\ell_0)$.

In every bounded case take $k=2$.  The definition of $c$ gives
$\ell^c\leq2$, while the choice of $b$ in
\eqref{eq:comb-b-choice} gives
$2^b\geq\max\{8,4\ell_0\}$.  Thus the pair has width at least
$w/2^b$, and outcome (U) holds.  The theorem follows.
\end{proof}

\begin{proof}[Proof of Theorem~\ref{thm:main}]
Proposition~\ref{prop:f124-leaf-pair} shows that $\{F\}$ is
leaf-reducible, and Proposition~\ref{prop:f124-wonderful} shows that it
is wonderful.  Theorem~\ref{thm:F-comb-conclusion} and
Lemma~\ref{lem:hjz-poly-length} imply that $\{F\}$ is generalized
nice.  Theorem~\ref{thm:hjz-generalized-nice} now gives the
Erd\H{o}s--Hajnal property for $F$.
\end{proof}

\section*{Acknowledgements}
\addcontentsline{toc}{section}{Acknowledgements}

We used GPT-5.6 Sol and an agentic harness built around GPT-5.6 Sol and Claude Fable 5 to assist with literature searches, hypothesis testing, the exploration and elimination of potential approaches, wording refinement, and manuscript proofreading. Viet-Hoang Tran thanks Tung Nguyen for introducing him to the background and literature of the Erdős–Hajnal conjecture. He also thanks Tho Tran Huu, Khoi M. N. Nguyen, and Hieu M. Vu for assistance with hardware-related matters. We are very grateful to Dung V. Nguyen and Quang X. Nguyen for their technical support in the use of the AI tools and agentic system.

\bibliographystyle{plain}
\phantomsection
\addcontentsline{toc}{section}{References}
\begingroup
\small
\raggedright
\bibliography{example_paper}

@article{ErdosHajnal1989,
  author  = {Paul Erd{\H{o}}s and Andr{\'a}s Hajnal},
  title   = {Ramsey-Type Theorems},
  journal = {Discrete Applied Mathematics},
  volume  = {25},
  number  = {1--2},
  pages   = {37--52},
  year    = {1989},
  doi     = {10.1016/0166-218X(89)90045-0}
}

@article{Rodl1986,
  author  = {Vojt{\v{e}}ch R{\"o}dl},
  title   = {On Universality of Graphs with Uniformly Distributed Edges},
  journal = {Discrete Mathematics},
  volume  = {59},
  number  = {1--2},
  pages   = {125--134},
  year    = {1986},
  doi     = {10.1016/0012-365X(86)90076-2}
}

@article{AlonPachSolymosi2001,
  author  = {Noga Alon and J{\'a}nos Pach and J{\'o}zsef Solymosi},
  title   = {Ramsey-Type Theorems with Forbidden Subgraphs},
  journal = {Combinatorica},
  volume  = {21},
  number  = {2},
  pages   = {155--170},
  year    = {2001},
  doi     = {10.1007/s004930100016}
}

@article{ChudnovskySafra2008,
  author  = {Maria Chudnovsky and Shmuel Safra},
  title   = {The {Erd{\H{o}}s--Hajnal} Conjecture for Bull-Free Graphs},
  journal = {Journal of Combinatorial Theory, Series B},
  volume  = {98},
  number  = {6},
  pages   = {1301--1310},
  year    = {2008},
  doi     = {10.1016/j.jctb.2008.02.005}
}

@article{ChudnovskyScottSeymourSpirkl2023,
  author  = {Maria Chudnovsky and Alex Scott and Paul Seymour and Sophie Spirkl},
  title   = {{Erd{\H{o}}s--Hajnal} for Graphs with No 5-Hole},
  journal = {Proceedings of the London Mathematical Society},
  volume  = {126},
  number  = {3},
  pages   = {997--1014},
  year    = {2023},
  doi     = {10.1112/plms.12504},
  eprint  = {2102.04994},
  archivePrefix = {arXiv}
}

@article{BucicNguyenScottSeymour2024,
  author  = {Matija Buci{\'c} and Tung Nguyen and Alex Scott and Paul Seymour},
  title   = {Induced Subgraph Density. {I}. {A} Loglog Step towards {Erd{\H{o}}s--Hajnal}},
  journal = {International Mathematics Research Notices},
  volume  = {2024},
  number  = {12},
  pages   = {9991--10004},
  year    = {2024},
  doi     = {10.1093/imrn/rnae065},
  eprint  = {2301.10147},
  archivePrefix = {arXiv},
  primaryClass = {math.CO}
}

@article{NguyenScottSeymourDensityIV,
  author  = {Tung Nguyen and Alex Scott and Paul Seymour},
  title   = {Induced Subgraph Density. {IV}. {New} Graphs with the {Erd{\H{o}}s--Hajnal} Property},
  journal = {Transactions of the American Mathematical Society},
  year    = {2026},
  note    = {Accepted 7 April 2026; arXiv:2307.06455v4 [math.CO]},
  eprint  = {2307.06455},
  archivePrefix = {arXiv},
  primaryClass = {math.CO}
}

@article{NguyenScottSeymourP5,
  author  = {Tung Nguyen and Alex Scott and Paul Seymour},
  title   = {Induced Subgraph Density. {VII}. {The} Five-Vertex Path},
  journal = {Proceedings of the London Mathematical Society},
  volume  = {132},
  number  = {3},
  pages   = {e70133},
  year    = {2026},
  doi     = {10.1112/plms.70133},
  eprint  = {2312.15333},
  archivePrefix = {arXiv}
}

@article{BucicFoxPham2024,
  author  = {Matija Buci{\'c} and Jacob Fox and Huy Tuan Pham},
  title   = {Equivalence between {Erd{\H{o}}s--Hajnal} and Polynomial {R{\"o}dl} and {Nikiforov} Conjectures},
  journal = {Bulletin of the London Mathematical Society},
  year    = {forthcoming},
  note    = {arXiv:2403.08303v2 [math.CO]},
  eprint  = {2403.08303},
  archivePrefix = {arXiv},
  primaryClass = {math.CO}
}

@misc{HuangJuZhou2026,
  author       = {Shenwei Huang and Yiao Ju and Yidong Zhou},
  title        = {{Erd{\H{o}}s--Hajnal} beyond the Five-Vertex Path},
  year         = {2026},
  howpublished = {Preprint},
  note         = {arXiv:2606.06258v2 [math.CO], revised 8 June 2026},
  url          = {https://arxiv.org/abs/2606.06258}
}
\endgroup

\end{document}